\documentclass[11pt,a4paper,twoside]{amsart}
\usepackage{amssymb,amsmath,amsthm,graphicx}
\theoremstyle{plain}
\newtheorem{theorem}{Theorem}[section]

\newtheorem{definition}[theorem]{Definition}
\newtheorem{lemma}[theorem]{Lemma}
\newtheorem{corollary}[theorem]{Corollary}
\newtheorem{proposition}[theorem]{Proposition}

\theoremstyle{remark}
\newtheorem{remark}[theorem]{Remark}

\usepackage{hyperref}

\numberwithin{equation}{section}

\newcommand{\C}{\mathbb{C}}
\newcommand{\R}{\mathbb{R}}

\newcommand{\I}{\infty}

\newcommand{\norm}[1]{\left\lVert #1\right\rVert}

\newcommand{\Jbr}[1]{\left\langle #1 \right\rangle}

\def\({\left(}
\def\){\right)}
\def\<{\left\langle}
\def\>{\right\rangle}
\def\le{\leqslant}
\def\ge{\geqslant}

\def\d{{\partial}}

\newcommand{\eps}{\varepsilon}

\newcommand{\rre}{\mathbb{R}}
\newcommand{\pt}{\partial}


\begin{document}
\title[Scattering for the generalized KdV equation]
{Scattering problem for the generalized\\
Korteweg-de Vries equation}

\author{Satoshi Masaki}
\address{Department of Mathematics, 
Faculty of Science, Hokkaido University,
Kita 10, Nishi 8, Kita-Ku, Sapporo, Hokkaido, 060-0810, Japan}
\email{masaki@math.sci.hokudai.ac.jp}

\author{Jun-ichi Segata}
\address{Faculty of Mathematics, Kyushu University, 
Fukuoka, 819-0395, Japan}
\email{segata@math.kyushu-u.ac.jp}

\subjclass[2000]{Primary 35Q53, 35B40; Secondary 35B30}

\keywords{generalized Korteweg-de Vries equation, scattering problem}

\maketitle

\begin{abstract}
In this paper we study the scattering problem 
for the initial value problem of the generalized 
Korteweg-de Vries (gKdV) equation. 
The purpose of this paper is to achieve two 
primary goals. Firstly, we show small data scattering 
for (gKdV) in the weighted Sobolev 
space, ensuring the initial and the asymptotic states 
belong to the  same class. 
Secondly, we introduce two equivalent characterizations 
of scattering in the weighted Sobolev space.
In particular, this involves the so-called conditional 
scattering in the weighted Sobolev space.
A key ingredient is incorporation of the 
scattering criterion for (gKdV) in the Fourier-Lebesgue 
space by the authors \cite{MS} into the the scattering 
problem in the weighted Sobolev space.
\end{abstract}

\section{Introduction}

In this paper we study the scattering problem 
for the generalized Korteweg-de Vries (gKdV) equation
\begin{equation}\label{gKdV}
	\pt_tu+\pt_x^3u=\mu\pt_x(|u|^{2\alpha}u),
	\qquad t,x\in\rre
\end{equation}
under the initial condition
\begin{equation}\label{IC}
	u(0,x)=u_{0}(x), \qquad\qquad x \in \R,
\end{equation}
where $u:\rre\times\rre\to\rre$ is an unknown function, 
$u_{0}:\rre\to\rre$ is a given function, 
and $\mu\in\rre\backslash\{0\}$ and $\alpha>0$ 
are constants. We call that (\ref{gKdV}) is defocusing if 
$\mu>0$ and focusing if $\mu<0$. 
Equation (\ref{gKdV}) is a generalization of 
the Korteweg-de Vries equation which models long waves 
propagating in a channel \cite{KV} and 
the modified Korteweg-de Vries equation which 
describes a time evolution for the  curvature of 
certain types of helical space curves \cite{L}. 

Equation (\ref{gKdV}) has the following conservation 
laws: If $u(t)$ is a solution to (\ref{gKdV}) on the time 
interval $I$ with $0\in I$, 
then, $u(t)$ has conservation of the mass
\begin{eqnarray}
	M[u(t)] := \frac12 \norm{u(t,\cdot)}_{L^2}^2=M[u_0] 
	\label{mass}
\end{eqnarray}
and conservation of the energy
\begin{eqnarray}
	E[u(t)] := \frac12 \norm{\d_x u(t,\cdot)}_{L^2}^2 + \frac{\mu}{2\alpha+2} 
	\norm{u(t,\cdot)}_{L^{2\alpha+2}}^{2\alpha+2}=E[u_0]
	\label{energy}
\end{eqnarray}
for any $t\in I$.


We take the initial data $u_0$ from
the weighted Sobolev space $H^1\cap H^{0,1}$, 
where $H^1$ is the usual Sobolev space and 
$H^{0,1}$ is the weighted $L^2$ space defined by 
\begin{eqnarray*}
	H^{0,1}=H^{0,1}(\rre):=\{f\in L^2(\rre)\ ;\ 
	\norm{f}_{H^{0,1}} =\|\langle x\rangle f\|_{L^{2}}<\infty\}
\end{eqnarray*}
with $\langle x\rangle=\sqrt{1+|x|^2}$. 
By the Sobolev embedding, one sees that
the weighted space $H^1 \cap H^{0,1}$ is embedded into
$L^r \cap \hat{L}^r$ for any $r\in [1,\infty]$,
where $\hat{L}^r$ is the Fourier-Lebesgue space 
defined for $1\le r \le \infty$ by 
\begin{eqnarray}
	\hat{L}^r=\hat{L}^r(\rre):=\{f\in{{\mathcal S}}'(\rre)\ ;\ 
	\norm{f}_{\hat{L}^r} =\|\hat{f}\|_{L^{r'}}<\infty\}
	\label{hatL_p}
\end{eqnarray}
and $r'$ denotes the H\"older conjugate of $r$. 

The purpose of this paper is to achieve two 
primary goals. Firstly, we show small data scattering 
for (\ref{gKdV})-\eqref{IC} in the weighted Sobolev 
space, ensuring the initial and the asymptotic states 
belong to the  same class. 
Secondly, we introduce two equivalent characterizations 
of scattering in the weighted Sobolev space.
In particular, this involves the so-called conditional 
scattering in the weighted Sobolev space.

There are many results on 
the small data scattering problem for (\ref{gKdV}). 
Strauss \cite{St} proved that if $\alpha>(3+\sqrt{21})/4$,  
and $u_0\in L^{(2\alpha+2)/(2\alpha+1)}$, $\pt_xu_0\in L^2$ 
are sufficiently small, then the solution to (\ref{gKdV}) 
is global and scatters in $H^1$. 
Ponce and Vega \cite{PV} have shown a similar 
scattering result for $\alpha>(5+\sqrt{73})/8$. 
Christ and Weintein \cite{CW} improved their results 
to $\alpha>(19-\sqrt{57})/8$. 
Furthermore, Hayashi and Naumkin  
extended their results to $\alpha\ge1$, where they 
proved an usual scattering for (\ref{gKdV}) when $\alpha>1$ 
\cite{HN1} (see also C\^{o}te \cite{C} for construction of 
large data wave operator) and a modified scattering for 
$\alpha=1$ \cite{HN2,HN3,HN4} 
(See also Harrop-Griffiths \cite{B}, 
Germain, Pusateri and Rousset \cite{GPR}, 
Correia, C\^{o}te, and Vega \cite{CCV} 
for other approaches). 
In those results, the classes of the initial states and 
the asymptotic states are different. 

Form the physical 
perspective, it is natural 
that the initial and the asymptotic states belong 
to the same class.
For this direction, Kenig, Ponce and Vega \cite{KPV} 
proved the small scattering of (\ref{gKdV}) 
in the scaling critical space $\dot{H}^{s_{\alpha}}$ 
for $\alpha\ge2$, where $s_{\alpha}:=1/2-1/\alpha$ 
is a scaling critical exponent (see also Strunk \cite{Strunk}). 
Since the scaling critical exponent $s_{\alpha}$ is
negative in the mass-subcritical case $\alpha<2$, 
the scattering of (\ref{gKdV}) in the scaling critical 
space $\dot{H}^{s_{\alpha}}$ becomes rather a difficult problem. 
Tao \cite{T1} proved 
global well-posedness and scattering for small data for (\ref{gKdV}) 
with the quartic nonlinearity $\mu\pt_{x}(u^{4})$ 
in $\dot{H}^{s_{3/2}}$.  
Later on, Koch and Marzuola \cite{KM} simplified Tao's 
proof and extended his result to a Besov space 
$\dot{B}^{s_{3/2}}_{\infty,2}$. 
In \cite{MS}, the authors proved small data scattering for 
(\ref{gKdV}) in the framework of the scaling critical 
Fourier-Lebesgue space $\hat{L}^{\alpha}$ for 
$8/5<\alpha\le2$.



For the large initial data, Dodson \cite{D} has shown 
the global well-posedness and scattering in $L^2$ 
for (\ref{gKdV}) with the defocusing and mass-critical 
nonlinearity (i.e., $\mu>0$ and $\alpha=2$) 
by using the concentration compactness argument by 
Kenig and Merle \cite{KenigMerle} and 
the monotonicity formula for (\ref{gKdV}) 
by Tao \cite{T2} (see also Killip, Kwon, Shao and Vi\c{s}an \cite{KKSV} 
for the existence of the minimal non-scattering solution 
for (\ref{gKdV}) with the focusing, mass-critical nonlinearity). 
After that Farah, Linares, Pastor and Visciglia \cite{FLPV} proved 
the global well-posedness and scattering in $H^1$ for (\ref{gKdV}) 
with the defocusing and mass-supercritical 
nonlinearity (i.e., $\mu>0$ and $\alpha>2$) by adapting the 
the concentration compactness argument into $H^1$. 
For the mass-subcritical case $\alpha<2$, the authors 
\cite{MS2,MS3} proved the existence of the minimal non-scattering 
solution for (\ref{gKdV}) with $5/3<\alpha<2$ by applying 
the concentration compactness argument in the 
Fourier-Bourgain-Morrey space. 
Furthermore, Kim \cite{K} proved the conditional 
scattering in the Fourier-Bourgain-Morrey space for (\ref{gKdV}) 
when the nonlinear term is defocusing and mass-subcritical 
with $5/3<\alpha<2$. 
Note that for the case $\alpha=1$, 
it is well-known that (\ref{gKdV}) is completely integrable. 
By using the inverse scattering method, Deift-Zhou \cite{DZ} 
obtained asymptotic behavior in time of solution to 
(\ref{gKdV}) with $\alpha=1$ and without smallness on the 
initial data. 

\subsection{Local well-posedness in a weighted space}
In this paper, we use several notions of a solution to (\ref{gKdV}).
Let $\{V(t)\}_{t\in\rre}$ be a unitary group 
generated by the $-\pt_x^3$.
For an interval $I\subset \R$, we define
\begin{eqnarray}
S(I)&:=&\{u:I\times \R\to\R\ ;\ \|u\|_{S(I)}<\infty\},\label{sn}\\
\|u\|_{S(I)}&:=&\|u\|_{L_x^{\frac52\alpha}(\R;L_t^{5\alpha}(I))}.
\nonumber
\end{eqnarray}

\begin{definition}[a solution to \eqref{gKdV}]
Let $X=\hat{L}^\alpha$, $X=H^1 \cap \hat{L}^\alpha$, or $X=H^1 \cap H^{0,1}$.
For an interval $I \subset \R$, we say a function $u:I\times \R \to \R$ is a $X$-solution on $I$ if $V(-t)u(t) \in C(I;X)$,
$\|u\|_{ S(J)}<\infty$ for any compact $J\subset I$, and the identity
\begin{equation}\label{E:sol}
	V(-t_2)u(t_2) = V(-t_1)u(t_1) 
	+ \int_{t_1}^{t_2} V(-\tau) \pt_x (|u|^{2\alpha}u)(\tau) d\tau
\end{equation}
holds for any $t_1,t_2 \in I$.
\end{definition}
Due to the modification in the definition of a solution, a natural extension of the initial condition \eqref{IC} to an arbitrary time $t_0\in\R$
 is as follows:
\begin{equation}\label{gIC}
	V(-t_0)u(t_0) = V(-t_0)u_0 \in X.
\end{equation}

\begin{remark}
$V(t)$ is an isometry on $\hat{L}^\alpha$ 
or $H^1 \cap \hat{L}^\alpha$.
Hence, $V(-t)u(t) \in C(I;X)$ is equivalent 
to $u(t) \in C(I;X)$ when $X=\hat{L}^\alpha$ 
or $X=H^1 \cap \hat{L}^\alpha$.
Moreover, \eqref{E:sol} is equivalent 
to the validity of the standard Duhamel formula.
Furthermore, \eqref{gIC} is equivalent to 
$u(t_0)=u_0 \in X$.
However, $V(t)$ is not a bounded operator 
from $H^1 \cap H^{0,1}$ to itself 
for any $t\neq0$ and hence
these modifications are essential
in the case $X=H^1 \cap H^{0,1}$.
We also remark that the embedding
\[
	H^1 \cap H^{0,1} \hookrightarrow
	H^1 \cap \hat{L}^{\alpha} \hookrightarrow
	\hat{L}^{\alpha}
\]
holds for any $1\le\alpha\le\infty$. 
Hence, a $H^1 \cap H^{0,1}$-solution is a 
$H^1 \cap \hat{L}^{\alpha}$-solution and 
similarly a $H^1 \cap \hat{L}^{\alpha}$-solution 
is a $\hat{L}^{\alpha}$-solution.
Further, it is known that $\hat{L}^\alpha$-solution 
is unique if $8/5<\alpha<10/3$ 
(see \cite[Theorem 1.2]{MS}).
\end{remark}

Before the scattering problem, 
let us consider the local well-posedness.
It is noteworthy that the local well-posedness 
in $\hat{L}^\alpha$ and $H^1 \cap \hat{L}^\alpha$ 
are already established in \cite{MS}.
We also have the local well-posedness 
in the weighted Sobolev space $H^1 \cap H^{0,1}$.

\begin{theorem}[Local well-posedness in 
$H^1 \cap H^{0,1}$]\label{T:lwp}
The initial value problem \eqref{gKdV} 
under \eqref{gIC} is locally well-posed 
in the weighted Sobolev space $H^1 \cap H^{0,1}$.
More precisely,
suppose that $V(-t_0)u_0 \in H^1 \cap H^{0,1}$ 
for some $t_0 \in \R$.
Then, there exist a interval $I\ni t_0$ and
 a unique $H^1 \cap H^{0,1}$-solution $u(t)$ 
 to \eqref{gKdV} under \eqref{gIC} on $I$
 such that
\[
\|V(-t)u\|_{L^\infty_t(I;H^1_x\cap H^{0,1}_x)}\lesssim 
\|V(-t_0)u_0\|_{H^1_x\cap H^{0,1}_x} 
+ \Jbr{t_0}\|u_0\|_{H^1_x}^{2\alpha+1}.
\]
Moreover, the data-to-solution map $V(-t_0)u_0 \mapsto u$ 
is a continuous map from $H^1 \cap H^{0,1}$ to 
$L^\infty (I;H^1\cap H^{0,1})$.
\end{theorem}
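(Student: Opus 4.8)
The plan is to prove local well-posedness in $H^1\cap H^{0,1}$ by a contraction mapping argument on the Duhamel formulation, but carried out in terms of the conjugated variable $v(t):=V(-t)u(t)$ so that the initial condition \eqref{gIC} reads $v(t_0)=V(-t_0)u_0\in H^1\cap H^{0,1}$. The guiding principle is that local well-posedness in the weaker spaces $\hat L^\alpha$ and $H^1\cap\hat L^\alpha$ is already available from \cite{MS}, so the task reduces to propagating the additional weighted regularity $H^{0,1}$ along the solution flow that \cite{MS} already produces, together with the Strichartz-type control via the $S(I)$-norm.

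First I would recall the commutation identity that makes the weight tractable: since $V(t)=e^{-t\partial_x^3}$, we have the relation $V(-t)\,x\,V(t)=x-3t\partial_x^2$ (equivalently $x V(t)=V(t)(x+3t\partial_x^2)$), which is the source of the factor $\Jbr{t_0}\|u_0\|_{H^1}^{2\alpha+1}$ and of the remark that $V(t)$ is \emph{not} bounded on $H^1\cap H^{0,1}$. Applying the weight $x$ to $v(t)=V(-t)u(t)$ and using this identity converts control of $\|xv(t)\|_{L^2}$ into control of $\|xV(-t)u_0\|_{L^2}$ plus a Duhamel term in which $x$ is distributed onto the nonlinearity, producing terms of the schematic form $\int V(-\tau)\,(x-3\tau\partial_x^2)\,\partial_x(|u|^{2\alpha}u)\,d\tau$. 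The factor $\tau$ here, bounded by $\Jbr{t_0}$ on a time interval containing $t_0$, is exactly what yields the $\Jbr{t_0}$ prefactor in the stated estimate.

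The core of the argument is then to set up the closed contraction. I would fix the map $\Phi(v)(t)=V(-t_0)u_0+\int_{t_0}^{t}V(-\tau)\partial_x(|u|^{2\alpha}u)(\tau)\,d\tau$ with $u=V(\cdot)v$, and estimate it in the complete metric space consisting of functions bounded in $L^\infty_t(I;H^1\cap H^{0,1})$ with finite $S(I)$-norm. The $H^1\cap\hat L^\alpha$ and $S(I)$ bounds come for free from \cite{MS}; the genuinely new estimate is the $H^{0,1}$ bound, which requires controlling $\|\,|x|\,\partial_x(|u|^{2\alpha}u)\,\|$ and $\|\,\partial_x^2\partial_x(|u|^{2\alpha}u)\,\|$ in an appropriate dual Strichartz norm. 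Here the nonlinearity $\partial_x(|u|^{2\alpha}u)$ is differentiated, so I expect to use the smoothing/maximal-function estimates for $V(t)$ together with fractional Leibniz rules to place the extra derivative and the weight $x$ onto $u$ at the cost of the $H^1$-norm raised to the power $2\alpha$, times one factor in a weighted or higher-regularity norm. Shrinking $I$ so that the resulting nonlinear constant is strictly less than $1$ closes the contraction and simultaneously gives uniqueness and the Lipschitz (hence continuous) dependence of the data-to-solution map.

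The main obstacle I anticipate is precisely the interaction of the weight $x$ with the derivative in the nonlinearity: after commuting $x$ through $V$ one is forced to control $\partial_x^2$ acting on $\partial_x(|u|^{2\alpha}u)$, i.e. effectively three derivatives on the nonlinearity, which cannot be absorbed by energy estimates alone and must be handled by the local smoothing estimate for the Airy group (gaining one derivative) combined with the maximal-in-$x$ $L^{5/2\alpha}_x L^{5\alpha}_t$ estimate encoded in the $S(I)$-norm. Making the derivative count and the Hölder exponents match up so that all factors close in $H^1\cap H^{0,1}$—and verifying that the time-weight $\tau$ only costs a harmless $\Jbr{t_0}$—is the delicate bookkeeping step. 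Once the linear estimates and the fractional Leibniz rule are lined up correctly, the contraction and the continuity of the solution map follow by the standard fixed-point and difference-estimate arguments.
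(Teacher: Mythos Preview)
Your plan to commute the weight through $V(t)$ via $J(t)=x-3t\partial_x^2$ is the right first move, and you correctly identify the obstacle: the Duhamel term for $Ju$ contains $-3\tau\,\partial_x^3(|u|^{2\alpha}u)$, three derivatives on the nonlinearity. The gap is in the proposed remedy. Local smoothing for the Airy group buys back at most one derivative in the inhomogeneous estimate, so you are still left with two derivatives to distribute, and the expansion of $\partial_x^3(|u|^{2\alpha}u)$ contains $(2\alpha+1)|u|^{2\alpha}\partial_x^3u$. With $H^1$ data the best smoothing-type control is $\partial_x^2u\in L^\infty_xL^2_t$; there is no admissible Strichartz/maximal pair that controls $\partial_x^3u$, and no H\"older split places $|u|^{2\alpha}\partial_x^2u$ in the required dual smoothing space using only $S(I)$ and $H^1$. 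The $x\partial_x(|u|^{2\alpha}u)$ piece is no better: writing $xu=Ju+3t\partial_x^2u$ reintroduces the same top-order term. So the bookkeeping you anticipate does not close.

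What the paper does---and what is missing from your outline---is to \emph{cancel} this term rather than estimate it. One introduces the modified variable $v:=Ju+3\mu t|u|^{2\alpha}u$ (not $V(-t)u$). Because $[L,t]=1$ and $J\partial_x=P-3tL$ with $P=x\partial_x+3t\partial_t$, the dangerous contribution $-3\mu t\,L(|u|^{2\alpha}u)$ in the equation for $Ju$ is absorbed into the left-hand side, yielding
\[
(\partial_t+\partial_x^3)v=(2\alpha+1)\mu|u|^{2\alpha}\partial_xv-2(\alpha-1)\mu|u|^{2\alpha}u,
\]
whose right-hand side has only one derivative (on $v$) and no derivative on $u$. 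One then takes the $H^1\cap\hat L^\alpha$-solution $u$ from \cite{MS} as given, and runs a contraction for $v$ alone in $L^\infty_tL^2_x\cap\{\partial_xv\in L^\infty_xL^2_t\}$, using $\|u\|_{S(I)}^{2\alpha}$ as the smallness parameter; afterwards one checks $v=Ju+3\mu t|u|^{2\alpha}u$ in the sense of distributions. The $\Jbr{t_0}\|u_0\|_{H^1}^{2\alpha+1}$ term you anticipated comes from the initial datum $v(t_0)=J(t_0)u_0+3\mu t_0|u_0|^{2\alpha}u_0$, not from a time-weight inside the Duhamel integral.
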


Now, we turn to the global existence of a solution. 
To this end, we introduce the notion of 
the maximal lifespan of a solution.
For a $X$-solution $u(t)$ to \eqref{gKdV} on an interval $I$, 
we define 
\begin{eqnarray*}
T_{\mathrm{max}}&:=&\sup\{T\in \R;\exists u
: X\text{-solution\ to\ (\ref{gKdV}) on }[t_0,T] \},\\
T_{\mathrm{min}}&:=&\inf\{T \in \R;\exists u
:X\text{-solution\ to\ (\ref{gKdV}) on }[T,t_0]\}
\end{eqnarray*}
with a picked $t_0 \in I$.
Note that these quantities are independent of 
the choice of $t_0 \in I$.
Further, we refer 
$I_{\max}=(T_{\mathrm{min}},T_{\mathrm{max}})$
to as the maximal lifespan of a solution $u$.
A solution $u$ on $I_{\max}$ is referred 
to as a maximal-lifespan solution.
We say a solution $u$ is global for positive time direction 
(resp. negative time direction) if $T_{\max}=\infty$ 
(resp. $T_{\min}=-\infty$).

It is obvious from the definition that 
$I_{\max}$ depends on the choice of 
the notion of a solution, i.e., on $X$.
However, those with $X=\hat{L}^\alpha$
and $X=H^1 \cap \hat{L}^\alpha$ coincides each other.
This property, which is called the persistence of 
$H^1$-regularity, implies that if a 
$\hat{L}^\alpha$-solution $u$ satisfies $u(t) \in H^1$ 
at some time in its maximal lifespan 
(as a $\hat{L}^\alpha$-solution) then $u(t)\in H^1$ 
holds in the whole maximal lifespan and further 
$u$ is a $H^1 \cap \hat{L}^\alpha$-solution with 
the same maximal lifespan.
Our next result shows that $I_{\max}$ is also 
the same for $H^1\cap H^{0,1}$-solution.

\begin{theorem}[Blowup alternative]\label{T:bu}
Let $u$ be a maximal-lifespan 
$H^1 \cap H^{0,1}$-solution and 
$I_{\max}=(T_{\min},T_{\max})$ be 
its maximal lifespan as a $H^1 \cap H^{0,1}$-solution.
If $T_{\max}<\infty$ then
\[
	\lim_{T\to T_{\max}-0} \|u\|_{S([t_0,T))} = \infty.
\]
A similar alternative holds for $T_{\min}$.
In particular, $I_{\max}$ is the same as 
those as a $\hat{L}^\alpha$- and 
$H^1 \cap \hat{L}^\alpha$-solution.
\end{theorem}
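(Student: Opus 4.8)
The plan is to prove the blowup alternative for the $H^1\cap H^{0,1}$-solution by leveraging the already-established theory for the weaker $\hat L^\alpha$- and $H^1\cap\hat L^\alpha$-solutions, together with the quantitative local well-posedness estimate of Theorem~\ref{T:lwp}. The strategy is a standard continuation argument: I would show that if the $S$-norm stays finite up to some finite time, then the solution can be extended past that time in $H^1\cap H^{0,1}$, contradicting maximality.

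First I would reduce the claim about $I_{\max}$ coinciding across the three notions of solution. By the embedding $H^1\cap H^{0,1}\hookrightarrow H^1\cap\hat L^\alpha\hookrightarrow\hat L^\alpha$ recorded in the Remark, any $H^1\cap H^{0,1}$-solution is automatically a $H^1\cap\hat L^\alpha$-solution, so its maximal lifespan is contained in that of the corresponding $H^1\cap\hat L^\alpha$-solution; by the persistence of $H^1$-regularity noted above, the latter agrees with the $\hat L^\alpha$ maximal lifespan. Hence it suffices to show the reverse inclusion, namely that the $H^1\cap H^{0,1}$-solution persists as long as the $\hat L^\alpha$-solution does. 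Because the $\hat L^\alpha$-solution is characterized by finiteness of $\|u\|_{S(J)}$ on compact subintervals and blows up precisely when this norm diverges, proving the stated limit $\lim_{T\to T_{\max}-0}\|u\|_{S([t_0,T))}=\infty$ for the $H^1\cap H^{0,1}$-solution simultaneously establishes the blowup alternative and the coincidence of the lifespans.

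Next I would argue the main continuation step by contraposition: suppose $T_{\max}<\infty$ yet $\|u\|_{S([t_0,T_{\max}))}=:A<\infty$. The goal is to control $\|V(-t)u(t)\|_{H^1\cap H^{0,1}}$ uniformly on $[t_0,T_{\max})$ and then restart the local theory at a time close to $T_{\max}$ to extend the solution, contradicting maximality. The key is a global-in-$S$-norm a priori bound on the weighted Sobolev norm. I would partition $[t_0,T_{\max})$ into finitely many subintervals $I_1,\dots,I_N$ on each of which the local $S$-norm is smaller than the small threshold $\eta$ dictated by Theorem~\ref{T:lwp}; finiteness of $A$ guarantees finitely many such pieces. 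On each piece the local well-posedness estimate propagates the $H^1\cap H^{0,1}$-bound with a controlled multiplicative growth, and the explicit factor $\Jbr{t_0}$ there stays bounded since all times live in the compact closure $[t_0,T_{\max}]$. Chaining the estimate across the $N$ intervals yields a finite bound $\sup_{t\in[t_0,T_{\max})}\|V(-t)u(t)\|_{H^1\cap H^{0,1}}<\infty$. In particular $V(-t)u(t)$ is bounded in $H^1\cap H^{0,1}$ as $t\uparrow T_{\max}$, so by weak compactness it has a limit along a subsequence, and reapplying Theorem~\ref{T:lwp} at a base time $t_0'$ sufficiently near $T_{\max}$ produces a solution on an interval extending beyond $T_{\max}$.

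The main obstacle I anticipate is the weighted component $H^{0,1}$, since $V(t)$ is not bounded on $H^1\cap H^{0,1}$ for $t\neq0$; this is exactly why the solution notion was modified to track $V(-t)u(t)$ rather than $u(t)$. Concretely, the commutator structure of the weight with the Airy group contributes terms like $t\,\pt_x^2$ when one computes $\langle x\rangle V(-t)u(t)$, so the propagation of the weighted norm genuinely couples to the $H^1$-norm and carries factors of $t$. The care needed is to verify that these factors remain bounded on the compact time interval $[t_0,T_{\max}]$ and that the nonlinear term $\pt_x(|u|^{2\alpha}u)$, after applying $V(-\tau)$ and the weight, is estimated by the finite $S$-norm and the propagated $H^1$-bound without any uncontrolled growth. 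Once the quantitative estimate of Theorem~\ref{T:lwp} is invoked with uniform constants over this compact interval—which is where the stated dependence on $\Jbr{t_0}$ and $\|u_0\|_{H^1}^{2\alpha+1}$ matters—the continuation goes through and the blowup alternative for $T_{\min}$ follows by the identical argument applied in the negative time direction.
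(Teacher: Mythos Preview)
Your proposal is correct and follows essentially the same approach as the paper's proof: both partition the interval into finitely many subintervals of small $S$-norm and iterate the quantitative persistence estimate (Lemma~\ref{L:localv}, which underlies Theorem~\ref{T:lwp}) to propagate the weighted bound $\|Ju\|_{L^\infty_tL^2_x}$. The paper's version is slightly more direct in that it simply proves the implication $\|u\|_{S([t_0,T))}<\infty\Rightarrow\|Ju\|_{L^\infty_tL^2_x([t_0,T))}<\infty$ and then invokes the already-known $\hat L^\alpha$ blowup alternative, thereby bypassing your weak-compactness and restarting step entirely.
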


This property reads as the persistence 
of the boundedness 
$V(-t)u(t) \in H^1 \cap H^{0,1}$ for 
$\hat{L}^\alpha$-solutions.
Due to this property, we use the notation 
$I_{\max}$ without clarifying
the notion of a solution.

\subsection{Main results}
Now, we consider the scattering problem.
We give the definition of scattering in $X$.

\begin{definition}
Let $X=\hat{L}^\alpha$, $X=H^1 \cap \hat{L}^\alpha$, 
or $X=H^1 \cap H^{0,1}$.
We say a $X$-solution $u(t)$ scatters in $X$ 
for positive time direction
if $T_{\mathrm{max}}=+\I$ and 
there exists a unique function $u_{+}\in X$ such that 
\begin{eqnarray}
\lim_{t\to+\infty}\|V(-t)u(t)-u_{+}
\|_{X}=0.\label{scattering}
\end{eqnarray}
The scattering  
for negative time direction is defined by a similar fashion. 
\end{definition}

Our first result is the scattering for small data.

\begin{theorem}[Small data scattering]\label{thm1} 
Let $8/5<\alpha<2$. 
Then there exists $\varepsilon_{0}>0$ 
such that if $u_{0}\in H^1\cap H^{0,1}(\rre)$ 
satisfies $\|u_{0}\|_{H^1\cap H^{0,1}}\le\varepsilon_{0}$, 
then the unique $H^1\cap H^{0,1}$-solution $u$ to (\ref{gKdV}) 
given in Theorem \ref{T:lwp} scatters in 
$H^1\cap H^{0,1}$ for both time directions.
Moreover,
\[
\|V(-t)u\|_{L^\infty (\R;H^1 \cap H^{0,1})} +
\|u\|_{S(\R)}
+\sup_{t\in \R} \Jbr{t}^{\frac13} \|u(t)\|_{L_{x}^{\infty}}
\lesssim  \|u_{0}\|_{H^1 \cap H^{0,1}}.
\]
\end{theorem}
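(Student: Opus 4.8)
The plan is to bootstrap the weighted control on top of the scattering theory in the Fourier--Lebesgue space already available in \cite{MS}. Since $\|u_0\|_{H^1\cap\hat{L}^{\alpha}}\lesssim\|u_0\|_{H^1\cap H^{0,1}}\le\varepsilon_0$, the small-data result of \cite{MS} produces a global $H^1\cap\hat{L}^{\alpha}$-solution $u$ which scatters in $H^1\cap\hat{L}^{\alpha}$ and satisfies
\[
\|u\|_{S(\R)}+\|V(-t)u\|_{L^\infty_t(\R;H^1\cap\hat{L}^{\alpha})}\lesssim\|u_0\|_{H^1\cap\hat{L}^{\alpha}}.
\]
By the blowup alternative (Theorem \ref{T:bu}) this coincides with the maximal $H^1\cap H^{0,1}$-solution of Theorem \ref{T:lwp}. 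Hence the only genuinely new task is to bound $\|V(-t)u(t)\|_{H^{0,1}}$ uniformly in $t$ and to show that $V(-t)u(t)$ is Cauchy in $H^{0,1}$ as $t\to\pm\infty$; by symmetry I treat $t\to+\infty$.

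The natural tool is the operator $J:=x-3t\partial_x^2$, which commutes with $\partial_t+\partial_x^3$ and factorizes as $J=V(t)\,x\,V(-t)$, equivalently $x\,V(-t)=V(-t)\,J$. Since $V(t)$ is an $L^2$-isometry, this yields $\|V(-t)u(t)\|_{H^{0,1}}\simeq\|u(t)\|_{L^2}+\|Ju(t)\|_{L^2}$, so that scattering in $H^{0,1}$ is equivalent to convergence of $V(-t)Ju(t)$ in $L^2$. Writing $N:=|u|^{2\alpha}u$ and using the commutation $J\partial_x=\partial_x J-1$ together with the Duhamel formula, I obtain
\[
xV(-t)u(t)=xu_0+\mu\int_0^t V(-\tau)\bigl(\partial_x(JN)-N\bigr)(\tau)\,d\tau .
\]
It then suffices to prove that both time integrals converge in $L^2$ as $t\to+\infty$ with norms controlled by $\|u_0\|_{H^1\cap H^{0,1}}$.

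The undifferentiated contribution is handled by the isometry of $V$ and the dispersive decay: $\|V(-\tau)N\|_{L^2}=\|N\|_{L^2}\lesssim\|u(\tau)\|_{L^\infty}^{2\alpha}\|u(\tau)\|_{L^2}\lesssim\Jbr{\tau}^{-2\alpha/3}$, whose time integral is finite precisely because $\alpha>8/5>3/2$; this also gives convergence of the associated integral. The differentiated contribution cannot be treated by energy estimates, because of both the factor $\partial_x$ and the second-order operator hidden in $JN$. Instead I invoke the Kato local smoothing estimate for the Airy group to absorb $\partial_x$, reducing matters to bounding $JN$ in the dual smoothing norm. The key structural point is that, as $u$ is real-valued, a Leibniz computation (using $3\tau\partial_x^2u=xu-Ju$) shows $J$ to act on $N$ as a derivation up to errors, namely
\[
JN=(2\alpha+1)|u|^{2\alpha}Ju-2\alpha\,|u|^{2\alpha}\,xu-6\alpha(2\alpha+1)\,\tau\,|u|^{2\alpha-2}u\,(\partial_x u)^2 .
\]
Measured in the spacetime smoothing and maximal-function norms (not at fixed time), the principal term $(2\alpha+1)|u|^{2\alpha}Ju$ is estimated by $\|u\|_{S(\R)}^{2\alpha}\,\|Ju\|$ in the relevant Strichartz norm and absorbed by the smallness of $\|u\|_{S(\R)}$, while the remaining terms are controlled via the local-smoothing norm of $\partial_x u$ together with the pointwise decay.

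The main obstacle is exactly this weighted nonlinear estimate: since $J$ carries a second derivative whereas the solution is only controlled in $H^1$, one must exploit the derivation structure above and distribute the derivatives through smoothing and maximal-function estimates, and the subcritical range $\alpha<2$ forces the use of the pointwise decay $\|u(t)\|_{L^\infty}\lesssim\Jbr{t}^{-1/3}$ to integrate the explicit time weight $\tau$. This decay is itself produced by the linear Airy dispersive estimate applied to $w=V(-t)u$, namely $\|u(t)\|_{L^\infty}=\|V(t)w\|_{L^\infty}\lesssim\Jbr{t}^{-1/3}\|w\|_{H^1\cap H^{0,1}}$, so the decay bound and the weighted bound are closed simultaneously by a continuity argument exploiting the smallness of $\|u_0\|_{H^1\cap H^{0,1}}$. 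Finally, scattering follows because the same estimates show the tails $\int_t^\infty V(-\tau)\bigl(\partial_x(JN)-N\bigr)\,d\tau\to0$ in $L^2$ as $t\to+\infty$, whence $V(-t)Ju(t)$ converges and its limit furnishes the weighted part of $u_+$; continuity of the data-to-solution map and uniqueness in $H^1\cap H^{0,1}$ follow by applying the same multilinear estimates to differences of solutions.
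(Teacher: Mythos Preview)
Your overall strategy---bootstrap the weighted bound on top of the $\hat L^\alpha$ scattering of \cite{MS} via the operator $J=x-3t\partial_x^2$---matches the paper's, and your Duhamel identity for $xV(-t)u(t)$ and your formula for $JN$ are correct. The gap is in the sentence ``the remaining terms are controlled via the local-smoothing norm of $\partial_x u$ together with the pointwise decay.'' Two of the three terms in your decomposition are not controllable in the way you suggest. The term $-2\alpha|u|^{2\alpha}xu$ does not involve $\partial_x u$ at all; if you substitute $xu=Ju+3\tau\partial_x^2u$ it becomes $-2\alpha|u|^{2\alpha}Ju-6\alpha\tau|u|^{2\alpha}\partial_x^2u$, so that after combining with the principal term you are left with $\tau$-weighted pieces
\[
-6\alpha\tau|u|^{2\alpha}\partial_x^2u\quad\text{and}\quad-6\alpha(2\alpha+1)\tau|u|^{2\alpha-2}u(\partial_xu)^2.
\]
To absorb the outer $\partial_x$ by dual smoothing you must place $JN$ in $L^{5/4}_xL^{10/9}_t$ (or the endpoint $L^1_xL^2_t$). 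The Klainerman--Sobolev decay gives $\tau|u|^{2\alpha}\lesssim\varepsilon^{2\alpha}$ only in $L^\infty_{t,x}$, and after pulling this factor out you are left needing $\partial_x^2u\in L^{5/4}_xL^{10/9}_t$ or $L^1_xL^2_t$, which is false (local smoothing only gives $L^\infty_xL^2_t$). If instead you try to place $|u|^{2\alpha}$ in $L^{5/4}_xL^{5/2}_t$ via the $S$-norm and keep $\tau\partial_x^2u$, then you need $\tau\partial_x^2u\in L^\infty_xL^2_t$, which again fails. No Strichartz pairing closes here; this is precisely why the paper (following Hayashi--Naumkin) remarks that ``$J(t)$ does not work well with the nonlinear term.''

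The fix adopted in the paper is to absorb the bad $\tau$-dependence into a new unknown $v:=Ju+3\mu t|u|^{2\alpha}u$. A short computation (using $J\partial_x=P-3tL$ and $[L,t]=1$) shows that $v$ solves
\[
\partial_tv+\partial_x^3v=(2\alpha+1)\mu|u|^{2\alpha}\partial_xv-2(\alpha-1)\mu|u|^{2\alpha}u,
\]
whose right-hand side has \emph{no explicit $t$-weight}. One then bootstraps the norm $\|v\|_{L^\infty_tL^2_x}+\|\partial_xv\|_{L^\infty_xL^2_t}$: the first forcing term is bounded by $\|u\|_{S}^{2\alpha}\|\partial_xv\|_{L^\infty_xL^2_t}$ via the dual estimate in $L^{5/4}_xL^{10/9}_t$, and the second by $\||u|^{2\alpha}u\|_{L^1_tL^2_x}\lesssim\int\langle\tau\rangle^{-2\alpha/3}d\tau$, finite for $\alpha>3/2$. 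Since $\|Ju\|_{L^2}\le\|v\|_{L^2}+3|t|\,\||u|^{2\alpha}u\|_{L^2}$ and the last term is $O(t^{1-2\alpha/3})\to0$, this yields both the uniform bound on $\|Ju\|_{L^\infty_tL^2_x}$ and, by the same estimates on the tails, the Cauchy property of $V(-t)u(t)$ in $H^{0,1}$. Your closing argument for scattering is then correct once this substitution is made.
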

We remark that 
the scattering in $\hat{L}^\alpha$ and 
$H^1 \cap \hat{L}^\alpha$ hold 
with a weaker smallness assumption for $8/5<\alpha<2$. 
More precisely,
for $u_0 \in \hat{L}^\alpha$ if 
\[
\|V(t)u_0\|_{S(\R)} 
+ \| |D_x|^{\frac34-\frac1{2\alpha}} V(t)u_0 
\|_{L^{\frac{20\alpha}{10-3\alpha}}_xL_t^{\frac{10}3} (\R)}
\]
is sufficiently small, then the unique 
$\hat{L}^\alpha$-solution $u(t)$ scatters in $\hat{L}^\alpha$ 
for both time directions.
We emphasize that the smallness of 
$\|u_0\|_{\hat{L}^\alpha}$ is a sufficient condition for this assumption but not a necessary condition.
By the persistence-of-regularity argument, one sees that
if $u_0 \in H^1$ in addition then $u(t)$ scatters in 
$H^1 \cap \hat{L}^\alpha$.
Although Theorem \ref{T:lwp} follows 
by a similar persistence-of-regularity type argument, 
a stronger smallness assumption is required in Theorem \ref{thm1}.

The second main result is 
the two equivalent characterizations of the scattering 
in the weighted Sobolev space.
\begin{theorem}[Scattering criterion]\label{thm2}
Assume $8/5<\alpha<2$. 
Let $u(t)$ be a unique maximal-lifespan $H^1\cap H^{0,1}$-solution of \eqref{gKdV} under \eqref{gIC}.
The following statements are equivalent:
\begin{enumerate}
\item $u(t)$ scatters for positive time direction in $H^1 \cap H^{0,1}$;
\item $u(t)$ is bounded in a weighted norm, i.e., for some $t_0 \in I_{\max}$,
\begin{eqnarray}
\|V(-t)u\|_{L_t^{\infty}H_x^{0,1}([t_0,T_{\mathrm{max}}))}<+\infty.
\label{gb}
\end{eqnarray}
\item There exist $\kappa> \frac{\alpha}{3(\alpha-1)(2\alpha+1)}$ and
$t_0 \in I_{\max}$ such that
\[
	\|u\|_{S([t_0,T_{\max}))} +\sup_{t\in [t_0,T_{\max})} \Jbr{t}^{\kappa}\|u \|_{L^{2(2\alpha+1)}_x} <+\infty.
\]
\end{enumerate}
Further, if one of the above is satisfied then $T_{\max}=\infty$ and
\[
\|V(-t)u\|_{L^\infty ([t_0,\infty);H^1 \cap H^{0,1})} +
\|u\|_{S([t_0,\infty))}
+\sup_{t\in [t_0,\infty)} \Jbr{t}^{\frac13} \|u(t)\|_{L_{x}^{\infty}}
<\infty
\]
for any $t_0 \in I_{\max}$.
The similar statements are true for negative time direction.
\end{theorem}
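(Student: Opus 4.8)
The plan is to prove the three statements equivalent through the cycle $(1)\Rightarrow(2)\Rightarrow(3)\Rightarrow(1)$, the whole argument resting on the commutator identity $V(t)\,x\,V(-t)=x-3t\partial_x^2=:J(t)$ for the Airy group. Since $V(-t)$ is an $L^2$-isometry and $xV(-t)=V(-t)J(t)$, this gives the basic equivalence $\norm{V(-t)u(t)}_{H^{0,1}}\simeq \norm{u(t)}_{L^2}+\norm{J(t)u(t)}_{L^2}$, so that the weighted norm in (2) is nothing but control of the vector field $J(t)$ acting on $u$. The implication $(1)\Rightarrow(2)$ is immediate: scattering in $H^1\cap H^{0,1}$ forces $T_{\max}=\infty$ and the convergence of $V(-t)u(t)$ in $H^{0,1}$, and a convergent curve is bounded, so \eqref{gb} holds.

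For $(2)\Rightarrow(3)$ I would exploit the factorization $u(t)=V(t)w(t)$ with $w(t):=V(-t)u(t)$. Assumption \eqref{gb} bounds $w(t)$ in $H^{0,1}$, and since $H^{0,1}\hookrightarrow L^1\cap L^2\hookrightarrow L^{p'}$ for every $p\in[2,\infty]$, the Airy dispersive estimate $\norm{V(t)g}_{L^p}\lesssim |t|^{-\frac13(1-2/p)}\norm{g}_{L^{p'}}$ yields $\norm{u(t)}_{L^p}\lesssim\langle t\rangle^{-\frac13(1-2/p)}$ after invoking mass conservation. Taking $p=2(2\alpha+1)$ produces the decay rate $\kappa_0=\frac{2\alpha}{3(2\alpha+1)}$, and a direct comparison shows $\kappa_0>\frac{\alpha}{3(\alpha-1)(2\alpha+1)}$ precisely in the range $8/5<\alpha<2$ (the inequality reduces to $2>\tfrac1{\alpha-1}$). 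Taking instead $p=\infty$ gives $\langle t\rangle^{1/3}\norm{u(t)}_{L^\infty}\lesssim 1$. Finiteness of $\norm{u}_{S}$ then follows by feeding this decay into the Strichartz estimates on the Duhamel formula, using that $\int^{\infty}\norm{u(\tau)}_{L^\infty}^{2\alpha}\,d\tau<\infty$ because $2\alpha/3>1$ for $\alpha>8/5$. This establishes (3).

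The substantial direction is $(3)\Rightarrow(1)$. From $\norm{u}_{S([t_0,T_{\max}))}<\infty$ the blowup alternative (Theorem \ref{T:bu}) gives $T_{\max}=\infty$, and the scattering criterion of \cite{MS} together with the persistence of $H^1$-regularity yields scattering in $H^1\cap\hat L^\alpha$; it remains only to upgrade the convergence to $H^{0,1}$, i.e.\ to show $V(-t)u(t)$ is Cauchy in $H^{0,1}$. By the identity above this amounts to controlling $J(\tau)$ applied to the Duhamel integrand. Here one cannot simply bound $J(\tau)=x-3\tau\partial_x^2$ by its size, since the factor $\tau$ in front of $\partial_x^2$ is not integrable against any decay below $\langle\tau\rangle^{-2}$, whereas the available decay is only $\langle\tau\rangle^{-\kappa(2\alpha+1)}$ with $\kappa(2\alpha+1)<1$ near the threshold. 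The device is the pointwise commutator algebra
\[
J(\tau)(|u|^{2\alpha}u)=(2\alpha+1)|u|^{2\alpha}\,J(\tau)u-2\alpha\,x|u|^{2\alpha}u-3(2\alpha+1)\tau\,\partial_x(|u|^{2\alpha})\,u_x,
\]
combined with $J(\tau)\partial_x=\partial_x J(\tau)-1$, which re-expresses every growing-in-$\tau$ contribution through the controlled quantity $J(\tau)u$ itself. Running a continuity/bootstrap argument on $A(T):=\sup_{[t_0,T]}\norm{J(t)u(t)}_{L^2}$ and estimating the remaining factors by H\"older with the decay of $\norm{u}_{L^{2(2\alpha+1)}}$ and with the $\hat L^\alpha$ local-smoothing Strichartz norms closes $A(T)\lesssim 1$; the threshold $\kappa>\frac{\alpha}{3(\alpha-1)(2\alpha+1)}$ is exactly the condition making the resulting time integral converge, and this delivers scattering in $H^{0,1}$, completing the cycle.

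The main obstacle I anticipate is this last weighted nonlinear estimate: the terms carrying the bare weight $x$ (such as $x|u|^{2\alpha}u$) and the bare factor $\tau$ do not decay in $L^2$ on their own, and the whole point is to absorb them into $J(\tau)u$ and into the smoothing gained from the outer $\partial_x$, so that only the integrable decay of $\norm{u}_{L^{2(2\alpha+1)}}$ is needed. Once $(2)$ is known (equivalently, once the cycle is closed), the factorization and dispersive estimate of the second paragraph give the quantitative bounds in the final display — the boundedness of $\norm{V(-t)u}_{L^\infty(H^1\cap H^{0,1})}$, the finiteness of $\norm{u}_{S}$, and $\sup_t\langle t\rangle^{1/3}\norm{u(t)}_{L^\infty}<\infty$ — while $T_{\max}=\infty$ has already been recorded. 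The negative-time statements follow by the symmetry $t\mapsto -t$, $x\mapsto -x$ of \eqref{gKdV}.
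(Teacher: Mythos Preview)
Your cycle $(1)\Rightarrow(2)\Rightarrow(3)\Rightarrow(1)$ differs from the paper's $(3)\Rightarrow(2)\Rightarrow(1)\Rightarrow(3)$, and both of your nontrivial steps contain genuine gaps.

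In $(2)\Rightarrow(3)$, the claim that $\|u\|_{S([t_0,\infty))}<\infty$ follows ``by feeding the decay into the Strichartz estimates on the Duhamel formula'' does not go through: the $S$-norm $L_x^{5\alpha/2}L_t^{5\alpha}$ lives at the $\hat L^{\alpha}$ scaling ($r=\alpha$ in Propositions~\ref{ho}--\ref{inho}), and no admissible inhomogeneous pairing lets one control $\partial_x(|u|^{2\alpha}u)$ by $\int\|u(\tau)\|_{L^\infty}^{2\alpha}\,d\tau$ times an $r=2$ quantity. The integrability of $\int t^{-2\alpha/3}\,dt$ is used in the paper only for the $L^1_tL^2_x$ forcing in the $v$-equation, which is at the $L^2$ level. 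Obtaining a global $S$-bound from (ii) is in fact the substantive conditional-scattering step; the paper accomplishes it (as part of $(2)\Rightarrow(1)$) by a different mechanism: the compact embedding $H^1\cap H^{0,1}\hookrightarrow\hat L^{\alpha}$ (Lemma~\ref{L:tb}) extracts a subsequence $V(-t_n)u(t_n)\to\psi_+$ in $\hat L^{\alpha}$, one solves the final-state problem to produce a comparison solution $\tilde u$ with $\|\tilde u\|_{S\cap X}<\infty$, and the long-time perturbation lemma (Proposition~\ref{prop:long}) transfers this bound to $u$.

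In $(3)\Rightarrow(1)$, your assertion that the threshold $\kappa>\frac{\alpha}{3(\alpha-1)(2\alpha+1)}$ ``is exactly the condition making the resulting time integral converge'' is wrong. The integral arising in any $L^2$ estimate for $Ju$ (equivalently for $v=Ju+3\mu t|u|^{2\alpha}u$) is $\int\|u\|_{L^{2(2\alpha+1)}}^{2\alpha+1}d\tau\sim\int\tau^{-(2\alpha+1)\kappa}d\tau$, which converges only for $\kappa>\tfrac{1}{2\alpha+1}$; since $\tfrac{1}{2\alpha+1}>\tfrac{\alpha}{3(\alpha-1)(2\alpha+1)}$ for all $\alpha>3/2$, a single bootstrap on $A(T)$ does not close near the threshold. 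The paper (its step $(3)\Rightarrow(2)$) instead iterates decay rates: the assumed $\kappa$-decay gives only the growth bound $\|v(t)\|_{L^2}\lesssim t^{1-(2\alpha+1)\kappa}$, which via Klainerman--Sobolev (Lemma~\ref{ks}) upgrades the decay of $\|u\|_{L^{2(2\alpha+1)}}$ to rate $\kappa_1=\alpha\kappa-\tfrac{\alpha}{3(2\alpha+1)}>\kappa$, and finitely many repetitions reach $\kappa_j>\tfrac{1}{2\alpha+1}$, at which point $\|v\|_{L^\infty_tL^2_x}<\infty$. The threshold in (iii) is precisely the condition $\kappa_1>\kappa$ for this iteration to gain, not a direct integrability condition. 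Relatedly, your commutator formula still carries a bare term $x|u|^{2\alpha}u$ which cannot simply be ``absorbed into $J(\tau)u$''; the paper's substitution $v$ produces the equation \eqref{E:v}, which contains no $x$-weight at all and is what makes the argument close.
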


\begin{remark}
For a $\hat{L}^\alpha$-solution, the boundedness 
$\|u\|_{S([t_0,T_{\max}))} < \infty$
is a necessary and sufficient condition for scattering 
in $\hat{L}^\alpha$ for positive time direction.
The equivalence of (i) and (iii) in Theorem \ref{thm2} 
implies that the additional boundedness condition
\[
	\sup_{t\in [t_0,T_{\max})} \langle t \rangle^{\kappa}
	\|u \|_{L^{2(2\alpha+1)}_x} < +\infty
\]
bridges the gap between scattering 
in $\hat{L}^\alpha$ and in $H^1 \cap H^{0,1}$.
This gap arises due to the weakness of our persistence result. 
A standard persistence-of-regularity argument shows that 
$\|u\|_{S([t_0,T_{\max}))} < \infty$
is also an equivalent characterization of
scattering for positive time direction in 
$H^1 \cap \hat{L}^\alpha$ for $H^1 \cap \hat{L}^\alpha$-solutions.
\end{remark}

We remark that
the implication ``(ii)$\Rightarrow$(i)'' in Theorem \ref{thm2} 
reads as a conditional scattering result.
Indeed, it establishes the scattering under the hypothesis 
of the a priori bound \eqref{gb}.
As mentioned above,
Kim \cite{K} showed a conditional scattering result 
for $5/3<\alpha <2$ under the boundedness in $H^1$ 
and in a Fourier-Bourgain-Morrey space.
Compared with the result, 
Theorem \ref{thm2}
covers a wider range $8/5<\alpha<2$
with a stronger boundedness assumption.

Let us compare the conditional scattering result
Theorem \ref{thm2} with the similar  
results for the mass-subcritical nonlinear Schr\"{o}dinger equation: 
\begin{eqnarray}
\left\{
\begin{array}{l}
\displaystyle{
i\pt_tu+\Delta u=\mu|u|^{2\alpha}u,
\qquad t\in\R,x\in\rre^d,}\\
\displaystyle{u(0,x)=u_{0}(x),
\qquad\qquad\qquad\ \  x\in\rre^d,}
\end{array}
\right.
\label{NLS}
\end{eqnarray}
where $u:\rre\times\rre^d\to\C$ is an unknown function, 
$u_{0}:\rre^d\to\C$ is a given function, 
and $\mu\in\rre\backslash\{0\}$ and $0<\alpha<2/d$  
are constants. 
For (\ref{NLS}) with the defocusing nonlinearity (i.e., $\mu>0$), 
by utilizing the pseudo-conformal transform or pseudo-conformal conservation law, 
it is shown in \cite{T,HT,CaWe,NO} that any $H^{0,1}$-solution
scatters in $H^{0,1}$
when $\alpha \ge 
\alpha_{\mathrm{St}}:=(-d+2+\sqrt{d^2+12d+4})/(4d)$.
As far as the authors know, this kind of transform or conservation law 
are not known for (\ref{gKdV}). 
As for the conditional scattering, Killip, Murphy, Vi\c{s}an 
and the first author \cite{KMMV1,KMMV2} 
proved scattering under the boundedness assumption 
with respect to a scaling critical homogeneous weighted norm 
or to a homogeneous Sobolev norm (see \cite{M1,M2} 
for similar study for $\mu<0$).

\subsection{Outline of the proof}

To investigate the property $V(-t) u(t) \in H^{0,1}$, 
it is convenient to introduce the operator
\[
	J(t):=V(t)xV(-t)=x-3t\pt_x^2.
\]
One strategy is that, we establish a persistence-type property in the weighted Sobolev space by looking at the equation for $Ju$.
This argument works well for the NLS equation \eqref{NLS}.
However, for the generalized KdV equation (\ref{gKdV}), 
the operator $J(t)$ does not work well with the nonlinear term. 
To overcome this difficulty, 
as in Hayashi and Naumkin \cite{HN1,HN2,HN3}, 
we introduce another variable
\begin{eqnarray}
v(t):=J(t)u(t)+3\mu t|u(t)|^{2\alpha}u(t).\label{v}
\end{eqnarray}
Note that if $u(t)$ is a solution to \eqref{gKdV} then one has
$v(t) = (x + 3\partial_x^{-1} \partial_t )u$, at least formally.
We would like to point out that our $v$ does not involve 
an anti-derivative $\partial_x^{-1}$.
A direct computation shows that $v$ solves a KdV-like 
equation
\begin{equation}\label{E:v}
\partial_t v + \partial_x^3 v 
= (2\alpha+1)\mu  |u|^{2\alpha} \partial_x v 
- 2(\alpha-1) \mu |u|^{2\alpha} u.
\end{equation}
It is noteworthy that the equation is written in the integral form and
hence that one can utilize the Strichartz estimates 
to obtain various estimates for $v$.


%

The following notation will be used throughout this 
paper: We use $A\lesssim B$ to denote the estimate 
$A\le CB$ where $C$ is a positive constant. 
$|D_x|^s=(-\pt_x^2)^{s/2}$ and 
$\langle D_x\rangle^s=(I-\pt_x^2)^{s/2}$ denote 
the Riesz and Bessel potentials of order $-s$, 
respectively. For $1\le p,q\le\infty$ and $I\subset\rre$, 
let us define a space-time Lebesgue spaces
\begin{eqnarray*}
L_t^qL_x^p(I)&=&
\{u:I\times\R\to\R\ ;\ \|u\|_{L_t^qL_x^p(I)}<\infty\},\\
\|u\|_{L_t^qL_x^p(I)}&=&
\|\|u(t,\cdot)\|_{L_x^p(\rre)}\|_{L_t^q(I)},\\
L_x^pL_t^q(I)&=&
\{u:I\times\R\to\R\ ;\ \|u\|_{L_x^pL_t^q(I)}<\infty\},\\
\|u\|_{L_x^pL_t^q(I)}&=&
\|\|u(\cdot,x)\|_{L_t^q(I)}\|_{L_x^p(\rre)}.
\end{eqnarray*}

The rest of the paper is organized as follows. 
In Section 2, we review the well-posedness theory 
for (\ref{gKdV}) in the Fourier-Lebesgue space. 
Sections 3 is devoted to the proof of Theorems \ref{T:lwp} and \ref{T:bu}.
In Sections 4 and 5, we prove 
Theorems \ref{thm1} and \ref{thm2},
respectively.

\section{Well-posedness in the Fourier-Lebesgue space}

In this section, we review the well-posedness theory 
for (\ref{gKdV}) in the Fourier-Lebesgue space $\hat{L}^{\alpha}$. 
Furthermore, we prove the long time perturbation 
for (\ref{gKdV}) in the Fourier-Lebesgue space.

We first review the space-time estimates in $\hat{L}^{\alpha}$ 
of solution to the Airy equation
\begin{eqnarray}
\left\{
\begin{array}{l}
\displaystyle{
\pt_tu+\pt_x^3u=F(t,x),
\qquad\  t\in I, x\in\rre,}\\
\displaystyle{u(0,x)=f(x),
\qquad\qquad\ \ x\in\rre,}
\end{array}
\right.
\label{eq:A}
\end{eqnarray}
where $I\subset\rre$ is an interval, 
$F:I\times\rre\to\rre$ and $f:\rre\to\rre$ are 
given functions. 
Let $\{V(t)\}_{t\in\rre}$ be an unitary 
group in $L^2$ defined by 
\begin{eqnarray*}
(V(t)f)(x)
=\frac{1}{\sqrt{2\pi}}
\int_{-\infty}^{\infty}e^{ix\xi+it\xi^{3}}
\hat{f}(\xi)d\xi.
\end{eqnarray*}
Using the group, 
the solution to (\ref{eq:A}) can be written as 
\begin{eqnarray*}
u(t)=V(t)f+\int_{0}^{t}V(t-\tau)F(\tau)d\tau.
\end{eqnarray*}


\begin{proposition}[homogeneous space-time estimates] 
\label{ho} 
Let $I$ be an interval. 
Let $(p,q)$ satisfy
\[
	0\le\frac{1}{p}<\frac14,\quad\quad 0 \le \frac1q 
	< \frac12 - \frac1p.
\]
Then, for any $f\in\hat{L}^r$,
\begin{equation}\label{eq:mixed}
	\norm{|D_x|^s V(t) f}_{L^p_x L^q_t(I)} 
	\le C\norm{f}_{\hat{L}^r},
\end{equation}
where
\[
	\frac1r = \frac2p + \frac1q,\quad s=-\frac1p+\frac2q
\]
and positive constant $C$ depends 
only on $r$ and $s$. 
\end{proposition}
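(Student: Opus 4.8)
The plan is to establish the inequality throughout the admissible region by combining one elementary endpoint estimate with genuinely oscillatory estimates and then interpolating. First, since $\|g\|_{L^{q}_{t}(I)}\le\|g\|_{L^{q}_{t}(\R)}$ for every $I\subset\R$, it is enough to take $I=\R$. Next I would record that the two relations $1/r=2/p+1/q$ and $s=-1/p+2/q$ are precisely those dictated by the Airy scaling $(x,t)\mapsto(\la x,\la^{3}t)$, so the asserted bound is scale invariant; this also explains why the exponent conditions are strict, as they must match the open conditions under which the Hardy--Littlewood--Sobolev inequality below converges. In the $(1/p,1/q)$-plane the admissible set is the open trapezoid bounded by $1/p=0$, $1/q=0$, $1/p=1/4$ and the line $2/p+1/q=1/2$, and my strategy is to prove the estimate on the left edge $\{1/p=0\}$ and at the corner $(1/p,1/q)=(1/4,1/4)$, to treat the line $2/p+1/q=1/2$ (the $\hat{L}^{2}=L^{2}$ case) by a $TT^{*}$ argument, and to fill the interior by interpolation.

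On the edge $1/p=0$ the argument is elementary and exact. Writing $|D_x|^{s}V(t)f(x)=c\int e^{ix\xi+it\xi^{3}}|\xi|^{s}\hat f(\xi)\,d\xi$ and substituting $\eta=\xi^{3}$ (treating $\xi>0$ and $\xi<0$ separately) linearises the phase in $t$: for each fixed $x$, the function $t\mapsto|D_x|^{s}V(t)f(x)$ becomes, up to a constant, the inverse Fourier transform in $\eta\leftrightarrow t$ of $e^{ix\eta^{1/3}}\eta^{(s-2)/3}\hat f(\eta^{1/3})$. Since on this edge $q\ge2$, the Hausdorff--Young inequality (Plancherel when $q=2$) bounds the $L^{q}_{t}$-norm by the $L^{q'}_{\eta}$-norm of this profile, and the unimodular factor $e^{ix\eta^{1/3}}$ disappears, yielding a bound uniform in $x$. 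Undoing the substitution, the exponent identities make the resulting frequency weight trivial, so $\||D_x|^{s}V(t)f\|_{L^{\infty}_{x}L^{q}_{t}}\lesssim\|\hat f\|_{L^{q'}}=\|f\|_{\hat{L}^{r}}$ for all $2\le q\le\infty$; in particular this already contains the trivial corner $(0,0)$ and the Kato-type corner $(0,1/2)$.

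To reach $1/p>0$ I would invoke the curvature of the cubic. For $L^{2}$ data (the line $2/p+1/q=1/2$) the $TT^{*}$ method reduces matters to bounding convolution against the homogeneous kernel $K(x,t)=c\int e^{i(x\xi+t\xi^{3})}|\xi|^{2s}\,d\xi$; van der Corput and stationary phase, using $\partial_\xi^{3}(\xi^{3})\neq0$, give the expected parabolic-homogeneous decay of $K$, after which a mixed-norm Hardy--Littlewood--Sobolev inequality closes the estimate, its convergence range reproducing exactly the open conditions on $(p,q)$. For the part of the trapezoid with $r<2$ I would prove the corner estimate $\||D_x|^{1/4}V(t)f\|_{L^{4}_{x,t}}\lesssim\|\hat f\|_{L^{4}}$ by the bilinear method: expand $\||D_x|^{1/4}V(t)f\|_{L^{4}}^{2}=\|(|D_x|^{1/4}V(t)f)^{2}\|_{L^{2}}$, apply Plancherel in $(x,t)$, and use that $\xi\mapsto\xi^{3}+(\Xi-\xi)^{3}$ has Jacobian $3|\Xi|\,|2\xi-\Xi|$, so that the curvature together with the weight $|\xi|^{1/4}|\Xi-\xi|^{1/4}$ reduces the computation to $\|\hat f\|_{L^{4}}$. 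Finally, because $s$ varies affinely over the region, I would fill the interior by Stein's analytic interpolation applied to the family $T_{z}=|D_x|^{s(z)}V(t)$, interpolating the edge and $L^{2}$-line bounds against the corner bound.

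The main obstacle is the oscillatory core, i.e. the kernel bound for $K$ and the bilinear $L^{4}$ estimate. The cubic phase degenerates at $\xi=0$, where $\partial_\xi^{2}(\xi^{3})=0$, and in the bilinear computation the Jacobian also vanishes at $\Xi=0$ and $\xi=\Xi/2$; these degeneracies occur exactly where the homogeneous weight $|\xi|^{2s}$ (with $s$ possibly negative) is singular, so the delicate point is to balance the flatness of the curve against the singular low-frequency weight. This balance is precisely what forces the Fourier--Lebesgue space $\hat{L}^{r}$ rather than an $L^{2}$-based norm. A secondary technical point is to set up the varying-$s$ analytic interpolation rigorously, choosing an admissible family with controlled growth so that Stein's theorem applies and the strict exponent conditions are preserved.
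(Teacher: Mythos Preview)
Your approach is essentially the paper's: prove the estimate at certain endpoint cases and interpolate. The paper simply cites Kenig--Ponce--Vega for the two $L^2$ corners $(p,q)=(4,\infty)$ and $(\infty,2)$ and Gr\"unrock for the diagonal $p=q$ with $r>4/3$, then interpolates; you instead prove the whole $p=\infty$ edge by Hausdorff--Young, the whole $r=2$ line by $TT^*$, and the diagonal corner $(p,q)=(4,4)$ bilinearly, which amounts to supplying self-contained proofs of the same cited ingredients. One slip to correct: the fourth side of the admissible trapezoid is $1/p+1/q=1/2$, not $2/p+1/q=1/2$; the latter is the $r=2$ level set and lies strictly inside the region for $1/p>0$. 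This does not damage your argument, since the convex hull of the $p=\infty$ edge, the $r=2$ segment, and the corner $(1/4,1/4)$ is exactly the correct trapezoid with vertices $(0,0),(0,1/2),(1/4,1/4),(1/4,0)$, so Stein interpolation still covers everything.
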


\begin{proof}[Proof of Proposition \ref{ho}] 

For the proof of (\ref{eq:mixed}) with $(p,q,r)=(4,\infty,2)$ 
or $(p,q,r)=(\infty,2,2)$, see \cite[Theorem 2.5]{KPV2} and 
\cite[Theorem 4.1]{KPV2}, respectively. 
For the proof of (\ref{eq:mixed}) with $p=q$ and $r>4/3$, see 
Gr\"{u}nrock \cite[Corollary 3.6]{G} or \cite[Lemma 2.2]{MS}. 
The general case follows from the above cases and 
the interpolation. See \cite[Proposition 2.1]{MS} for the detail. 
\end{proof}

%

\begin{proposition}[inhomogeneous space-time estimates] 
\label{inho} 
Let $4/3<r<4$ and let 
$(p_{j},q_{j})$ ($j=1,2$) satisfy
\[
	0\le\frac{1}{p_{j}}<\frac14,\quad\quad 0 \le \frac{1}{q_{j}} 
	< \frac12 - \frac1p_{j}.
\]
Then, the inequalities 
\begin{equation}
\left\|
\int_0^tV(t-\tau)F(\tau)d\tau
\right\|_{L_t^{\infty}(I;\hat{L}_x^{r})}
\le C_{1}
\||D_x|^{-s_{2}}
F\|_{L_x^{p_{2}'}L_{t}^{q_{2}'}(I)},
\label{k}
\end{equation}
and
\begin{equation}
\left\||D_x|^{s_{1}}
\int_0^tV(t-\tau)F(\tau)d\tau
\right\|_{L_x^{p_{1}}L_{t}^{q_{1}}(I)}
\le C_{2}
\||D_x|^{-s_{2}}
F\|_{L_x^{p_{2}'}L_{t}^{q_{2}'}(I)} \label{l}
\end{equation}
hold for any $F$ satisfying $|D_x|^{-s_{2}}
F\in L_x^{p_{2}'}L_{t}^{q_{2}'}$ with
\[
	\frac1r = \frac{2}{p_{1}} + \frac{1}{q_{1}},
	\quad s_{1}=-\frac{1}{p_{1}}+\frac{2}{q_{1}}
\]
and
\[
	\frac{1}{r'} = \frac{2}{p_{2}} + \frac{1}{q_{2}},
	\quad s_{2}=-\frac{1}{p_{2}}+\frac{2}{q_{2}},
\]
where 
the constant $C_{1}$ depends on $r$, $s_{2}$ and $I$, and 
the constant $C_{2}$ depends on $r$, $s_{1}$, $s_{2}$ and $I$.
\end{proposition}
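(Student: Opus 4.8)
The plan is to derive both inequalities from the homogeneous estimate of Proposition~\ref{ho} by the standard duality ($TT^*$) argument, and then to pass from the non-retarded to the retarded (Duhamel) integral in \eqref{l} by means of the Christ--Kiselev lemma. First I would record the \emph{dual homogeneous estimate}. Applying Proposition~\ref{ho} with $r$ replaced by $r'$ shows that $g\mapsto |D_x|^{s_2}V(t)g$ is bounded from $\hat{L}^{r'}$ into $L_x^{p_2}L_t^{q_2}(I)$, where $\tfrac1{r'}=\tfrac2{p_2}+\tfrac1{q_2}$ and $s_2=-\tfrac1{p_2}+\tfrac2{q_2}$. Since the Fourier transform is an isometry $\hat{L}^{r'}\to L^r$, one identifies $(\hat{L}^{r'})^\ast\cong\hat{L}^r$ through the $L^2$ pairing, and taking the adjoint (using $V(t)^\ast=V(-t)$ and the self-adjointness of $|D_x|^{s_2}$) yields
\[
\left\| \int_I V(-\tau) F(\tau)\, d\tau \right\|_{\hat{L}^r} \le C\, \left\| |D_x|^{-s_2} F \right\|_{L_x^{p_2'} L_t^{q_2'}(I)}.
\]

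For \eqref{k} I expect no retardation machinery to be needed. Because $V(t)$ is an isometry on $\hat{L}^r$ and $|D_x|^{-s_2}$ commutes with multiplication by time cut-offs, for each fixed $t$ one may write $\int_0^t V(t-\tau)F\,d\tau = V(t)\int_I V(-\tau)\big[\mathbf{1}_{[0,t]}(\tau)F(\tau)\big]\,d\tau$ and apply the dual estimate above to $\mathbf{1}_{[0,t]}F$; restricting the time integral only decreases the right-hand norm, so taking the supremum over $t\in I$ gives \eqref{k} directly, with the stated $I$-dependence of $C_1$ inherited from the dual estimate.

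For \eqref{l} I would compose the map $f\mapsto|D_x|^{s_1}V(t)f$ (Proposition~\ref{ho} with the exponents $p_1,q_1,r,s_1$) with the adjoint above. This produces the \emph{non-retarded} inhomogeneous estimate
\[
\left\| |D_x|^{s_1} \int_I V(t-\tau) F(\tau)\, d\tau \right\|_{L_x^{p_1} L_t^{q_1}(I)} \le C\, \left\| |D_x|^{-s_2} F \right\|_{L_x^{p_2'} L_t^{q_2'}(I)},
\]
after setting $F=|D_x|^{s_2}G$ for the auxiliary input $G$. To replace $\int_I$ by the retarded integral $\int_0^t$ I would invoke the Christ--Kiselev lemma in the time variable. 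The only hypothesis is that the source time-exponent is strictly below the target time-exponent, i.e. $q_2'<q_1$; this is automatic, since the admissibility conditions force $\tfrac1{q_j}<\tfrac12-\tfrac1{p_j}<\tfrac12$ for $j=1,2$, whence $\tfrac1{q_1}+\tfrac1{q_2}<1$, which is exactly $q_2'<q_1$.

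The step I expect to be the main obstacle is this last retardation for \eqref{l}. The textbook form of the Christ--Kiselev lemma requires the truncation variable to be the outer one, whereas the norms $L_x^{p}L_t^{q}$ here carry time as the \emph{inner} variable; making the argument rigorous therefore relies on the mixed-norm version of the lemma, with the spatial factor $L_x^{p_1}$ (resp. $L_x^{p_2'}$) treated as a Banach-space value, as set up in \cite{MS}. The remaining points are routine bookkeeping: checking that the scaling relations $\tfrac1r=\tfrac2{p_1}+\tfrac1{q_1}$, $s_1=-\tfrac1{p_1}+\tfrac2{q_1}$ and their index-$2$ counterparts are preserved under the duality, and that all constants depend only on $r$, $s_1$, $s_2$ and $I$.
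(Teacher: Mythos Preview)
Your proposal is correct and follows exactly the approach the paper intends: derive the dual of Proposition~\ref{ho}, combine it with the forward estimate via $TT^*$ to get the non-retarded inhomogeneous bound, and then upgrade to \eqref{l} by the Christ--Kiselev lemma, while handling \eqref{k} directly since the target norm is $L^\infty_t$. You also correctly flag the one genuine technical point---that the truncation variable $t$ is the inner variable in $L^p_xL^q_t$, so one needs the mixed-norm version of Christ--Kiselev---which is precisely why the paper cites \cite[Lemma~2.5]{KK} alongside \cite{CK} and defers the remaining bookkeeping to \cite[Proposition~2.5]{MS}.
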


\begin{proof}[Proof of Proposition \ref{inho}] 
(\ref{k}) and (\ref{l}) follow from Proposition \ref{ho} and 
Christ-Kiselev lemma \cite{CK} (see also \cite[Lemma 2.5]{KK} 
for the space-time norm version of Christ-Kiselev lemma). 
See \cite[Proposition 2.5]{MS} for the detail. 
\end{proof}

Next, we review the small data scattering in $\hat{L}^{\alpha}$ 
for (\ref{gKdV}) obtained by \cite{MS}.  
%

\begin{lemma} \label{lem:ss}
Let $8/5<\alpha<2$. Then there exists $\tilde{\varepsilon}>0$ 
such that if $u_{0}\in \hat{L}_{x}^{\alpha}(\rre)$ 
satisfies $\|u_{0}\|_{\hat{L}_{x}^{\alpha}}\le\tilde{\varepsilon}$, 
then there exists a global $\hat{L}^{\alpha}$-solution $u$ to (\ref{gKdV}) 
satisfying 
\begin{eqnarray}
\|u\|_{L_{t}^{\infty}(\R; \hat{L}_{x}^{\alpha})}
+\|u\|_{S(\R)}
\le2\|u_{0}\|_{\hat{L}_{x}^{\alpha}}.
\label{se}
\end{eqnarray} 
Further, $u(t)$ scatters in $\hat{L}^\alpha$
for both time directions.
\end{lemma}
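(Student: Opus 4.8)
The plan is to construct the solution by the usual contraction-mapping argument for the Duhamel map
\[
	\Phi(u)(t) := V(t)u_0 + \mu\int_0^t V(t-\tau)\,\partial_x(|u|^{2\alpha}u)(\tau)\,d\tau ,
\]
carried out on a ball of radius $2\norm{u_0}_{\hat{L}^{\alpha}}$ inside the complete metric space $X$ defined by
\[
	\norm{u}_X := \norm{u}_{L_t^\infty(\R;\hat{L}^{\alpha})} + \norm{u}_{S(\R)} + \Bigl\| |D_x|^{\frac34-\frac1{2\alpha}} u\Bigr\|_{L_x^{\frac{20\alpha}{10-3\alpha}}L_t^{\frac{10}3}(\R)} .
\]
The last two norms are exactly the space-time norms covered by Proposition~\ref{ho}: one checks that $S(\R)=L_x^{\frac52\alpha}L_t^{5\alpha}$ corresponds to the parameters $(r,s)=(\alpha,0)$ and that the auxiliary smoothing norm corresponds to $(r,s)=(\alpha,\tfrac34-\tfrac1{2\alpha})$. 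In both cases $r=\alpha$, so the homogeneous estimate is driven by $\hat{L}^{\alpha}$; moreover the binding admissibility condition $\frac1p<\frac14$ for the $S(\R)$ norm reads $\frac2{5\alpha}<\frac14$, which is precisely what forces $\alpha>8/5$.

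First I would bound the linear part: Proposition~\ref{ho} gives $\norm{V(t)u_0}_X\lesssim\norm{u_0}_{\hat{L}^{\alpha}}$ for all three constituent norms at once. For the Duhamel part I would apply the inhomogeneous estimates \eqref{k} and \eqref{l} with the dual pair $(p_2,q_2)$ chosen so that $s_2=\frac14+\frac1{2\alpha}$, which reduces everything to controlling $\||D_x|^{-s_2}\partial_x(|u|^{2\alpha}u)\|_{L_x^{p_2'}L_t^{q_2'}(\R)}$. Since $|D_x|^{-s_2}\partial_x$ equals, up to a bounded Riesz transform, the operator $|D_x|^{1-s_2}$ with $1-s_2=\frac34-\frac1{2\alpha}$ — exactly the order of the auxiliary norm — a fractional chain/Leibniz rule (legitimate because $2\alpha>1$) places these derivatives on a single factor, measured by the auxiliary norm, while Hölder's inequality in space-time puts the remaining $2\alpha$ factors into $S(\R)$. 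A direct bookkeeping of exponents yields $q_2'=\frac{10}7$ and the dual admissible $(p_2,q_2)$, giving the multilinear bound $\||D_x|^{-s_2}\partial_x(|u|^{2\alpha}u)\|_{L_x^{p_2'}L_t^{q_2'}}\lesssim\norm{u}_X^{2\alpha+1}$. Hence $\norm{\Phi(u)}_X\lesssim\norm{u_0}_{\hat{L}^{\alpha}}+\norm{u}_X^{2\alpha+1}$, and an analogous difference estimate gives the contraction once $\tilde\varepsilon$ is small. This produces a unique global $\hat{L}^{\alpha}$-solution with $\norm{u}_X\le 2\norm{u_0}_{\hat{L}^{\alpha}}$, which in particular yields \eqref{se}.

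For scattering I would set $u_+:=u_0+\mu\int_0^{\infty}V(-\tau)\,\partial_x(|u|^{2\alpha}u)(\tau)\,d\tau$, the integral converging in $\hat{L}^{\alpha}$ because the full-line nonlinear norm above is finite. Applying \eqref{k} on the interval $[t,\infty)$ then gives
\[
	\norm{V(-t)u(t)-u_+}_{\hat{L}^{\alpha}}\lesssim \||D_x|^{-s_2}\partial_x(|u|^{2\alpha}u)\|_{L_x^{p_2'}L_t^{q_2'}([t,\infty))}\longrightarrow 0
\]
as $t\to+\infty$ by dominated convergence, and symmetrically for $t\to-\infty$, establishing scattering in $\hat{L}^{\alpha}$ in both directions.

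The step I expect to be the main obstacle is the multilinear nonlinear estimate, because the factor $\partial_x$ is a genuine loss of one derivative. The delicate point is that the dual regularity $s_2$ available on the forcing term is \emph{strictly} less than $1$, so the full derivative cannot be absorbed by the smoothing of Proposition~\ref{inho} alone; it is only the exact matching $1-s_2=\frac34-\frac1{2\alpha}$ with the order of the auxiliary space-time norm — a norm that is itself bounded purely at the linear level — that closes the estimate. Verifying that the Hölder bookkeeping lands on a pair $(p_2,q_2)$ satisfying all hypotheses of Proposition~\ref{inho} throughout the range $8/5<\alpha<2$ is where the real work lies.
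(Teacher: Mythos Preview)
Your outline is correct and is essentially the argument that underlies the cited result. Note, however, that the paper does not actually prove this lemma: its entire proof reads ``See \cite[Theorem 1.7]{MS}.'' What you have written is a faithful reconstruction of the contraction argument from that reference, and indeed your auxiliary smoothing norm and dual forcing norm coincide exactly with the spaces $X(I)$ and $Y(I)$ that the present paper introduces just before Proposition~\ref{prop:long} and uses in its proof; the exponent bookkeeping you describe is carried out there almost verbatim (with Lemma~\ref{2.5} supplying the fractional chain rule). So your approach is not an alternative route but precisely the one the authors have in mind---they simply chose to outsource it to their earlier paper rather than repeat it here.
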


\begin{proof}[Proof of Lemma \ref{lem:ss}] 
See \cite[Theorem 1.7]{MS}. 
\end{proof}

Next we prove the long time perturbation lemma 
for (\ref{gKdV}) in the Fourier-Lebesgue space. 
We define
\begin{eqnarray*}
X(I)&:=&\{u:I\times\R\to\R\ ;\ \| u \|_{X(I)}<\infty\},\\
\| u \|_{X(I)}& := &
\| |D_x|^s u 
\|_{L^{\frac{20\alpha}{10-3\alpha}}_xL_t^{\frac{10}3} ( I)}, \\
Y(I)&:=&\{u:I\times\R\to\R\ ;\ \| u \|_{Y(I)}<\infty\},\\
\| u \|_{Y(I)}& := &\| |D_x|^s u \|_{L^{\frac{20\alpha}{10+13\alpha}}_xL_t^{\frac{10}7} ( I)}
\end{eqnarray*}
with $s=\frac34 - \frac{1}{2\alpha}$.

\begin{proposition}[Long time perturbation]\label{prop:long}
Assume $8/5<\alpha<2$. 
For any $M>0$ there exists $\eps>0$
such that the following property holds:
Let $t_0 \in \R$ and
let $I \subset \R$ be an interval such that $t_0 \in \overline{I}$.
Let $\widetilde{u}: I \times \R \to \R$ be a function such that $\widetilde{u} \in S(I) \cap X(I)$, 
where $S(I)$ is given by (\ref{sn}). 
Put $\mathcal{E} := 
(\pt_t+\pt_x^3)\widetilde{u} 
- \mu \partial_x (|\widetilde{u}|^{2\alpha}\widetilde{u})$.
Let $u_0 \in \hat{L}^\alpha$.
Suppose that
\begin{eqnarray}
\|\widetilde{u}\|_{S(I) \cap X(I)}  \le M
\label{ba1}
\end{eqnarray}
and
\begin{align}
\left\|V(t-t_0)(u_0-\widetilde{u}(t_0))
-\int_{t_0}^t V(t-\tau) \mathcal{E}(\tau) d\tau\right\|_{S (I) \cap X(I)}\le \eps.
\label{ba2}
\end{align}
Then, the unique $\hat{L}^\alpha$-solution $u(t)$ of \eqref{gKdV} satisfying $u(t_0) = u_0$
exists on $I$ and satisfies
\[
		\| u- \widetilde{u} \|_{S(I) \cap X(I)} \lesssim_M \eps.
\]
\end{proposition}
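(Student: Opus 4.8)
The plan is to prove this long-time perturbation result by a standard bootstrap/continuity argument built on the inhomogeneous space-time estimates of Proposition \ref{inho}, applied to the difference $w := u - \widetilde{u}$. First I would write the integral equation satisfied by $w$. Since $u$ solves \eqref{gKdV} exactly and $\widetilde{u}$ solves it only up to the error $\mathcal{E}$, subtracting the Duhamel formulas gives
\[
w(t) = V(t-t_0)(u_0 - \widetilde{u}(t_0)) - \int_{t_0}^t V(t-\tau)\mathcal{E}(\tau)\,d\tau + \int_{t_0}^t V(t-\tau)\,\mu\partial_x\bigl(|u|^{2\alpha}u - |\widetilde{u}|^{2\alpha}\widetilde{u}\bigr)(\tau)\,d\tau.
\]
The first two terms are exactly the quantity controlled by $\eps$ in hypothesis \eqref{ba2}, so the whole analysis reduces to estimating the nonlinear remainder in the working norm $S(I)\cap X(I)$.

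The key analytic input is a difference estimate for the nonlinearity. Using the pointwise bound $\bigl||u|^{2\alpha}u - |\widetilde{u}|^{2\alpha}\widetilde{u}\bigr| \lesssim \bigl(|u|^{2\alpha} + |\widetilde{u}|^{2\alpha}\bigr)|w|$, together with $|u| \le |\widetilde{u}| + |w|$, I would apply Proposition \ref{inho} (in the form \eqref{l}, with the appropriate exponent triples matching the $X$- and $S$-norms as chosen in \cite{MS}) to bound the nonlinear Duhamel term by a product of the form $\bigl(\|\widetilde{u}\|_{S(I)\cap X(I)} + \|w\|_{S(I)\cap X(I)}\bigr)^{2\alpha}\|w\|_{S(I)\cap X(I)}$. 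This is precisely the structure already used in the local theory of \cite{MS}; the exponents $s = \tfrac34 - \tfrac{1}{2\alpha}$ and the Lebesgue indices in $X(I)$, $Y(I)$ are arranged so that the inhomogeneous estimate closes with a genuine multilinear gain in the $S\cap X$ norm. Feeding \eqref{ba1} into this gives
\[
\|w\|_{S(I)\cap X(I)} \lesssim \eps + C\bigl(M + \|w\|_{S(I)\cap X(I)}\bigr)^{2\alpha}\|w\|_{S(I)\cap X(I)}.
\]

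From here the argument is a standard continuity/bootstrap. I would choose $\eps = \eps(M)$ small and run the estimate on a family of subintervals, or directly invoke a connectedness argument on the set of times where $\|w\|_{S([t_0,t])\cap X([t_0,t])}$ stays below a threshold $\delta(M)$ with $C(M+\delta)^{2\alpha}\delta \le \tfrac12\delta$; continuity of the norm in the endpoint, together with the a priori smallness at $t_0$, forces the bound $\|w\|_{S(I)\cap X(I)} \lesssim_M \eps$ to persist on all of $I$, which simultaneously yields existence of $u$ on $I$ via the local well-posedness of \cite{MS}. The main obstacle is technical rather than conceptual: because the constant $C$ in \eqref{l} depends on the interval $I$, one cannot simply take $I$ arbitrary in a single step, so I expect the delicate point to be partitioning $I$ into finitely many pieces on which $\|\widetilde{u}\|_{S\cap X}$ is small (using $\|\widetilde{u}\|_{S(I)\cap X(I)}\le M < \infty$ and absolute continuity of the norm) and then propagating the smallness of $w$ across the subintervals, tracking how the accumulated constant depends on the number of pieces, which is itself controlled by $M$.
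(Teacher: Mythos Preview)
Your overall strategy matches the paper's: write the Duhamel equation for $w = u - \widetilde{u}$, control the nonlinear difference via the inhomogeneous estimates of Proposition~\ref{inho} together with the fractional Leibniz rules (Lemmas~\ref{2.5} and~\ref{2.4}), subdivide $I$ into finitely many pieces on which $\|\widetilde u\|_{S\cap X}$ is small, and propagate the smallness of $w$ inductively across the pieces. The final paragraph of your proposal is exactly what the paper does.

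Two points deserve correction. First, the ``direct connectedness'' alternative you offer cannot work as written: the condition $C(M+\delta)^{2\alpha}\delta \le \tfrac12\delta$ reduces to $C(M+\delta)^{2\alpha}\le\tfrac12$, which is impossible once $M$ is large, regardless of how small $\eps$ or $\delta$ is. There is no way to close the estimate on all of $I$ in one shot; the subdivision is mandatory, not merely a technical convenience. Second, your diagnosis of \emph{why} subdivision is needed is off. The issue is not that the Strichartz constant in \eqref{l} depends on $I$ (in fact these constants are uniform in $I$ for the exponents used here); the issue is precisely the one above, namely that the coefficient in front of the linear-in-$\|w\|$ term is $\|\widetilde u\|_{S\cap X}^{2\alpha}$, which on a generic piece of $I$ is of size $M^{2\alpha}$ and hence not absorbable. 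The subdivision replaces $M$ by a small parameter $\eta$ on each subinterval $I_j$, giving $\|w\|_{S(I_j)\cap X(I_j)} \lesssim (\text{accumulated error}) + \eta^{2\alpha}\|w\|_{S(I_j)\cap X(I_j)} + \|w\|_{S(I_j)\cap X(I_j)}^{2\alpha+1}$, which now closes. The paper then tracks the growth of the accumulated error through the $N=N(M,\eta)$ pieces (it grows like $4^j\eps$), and this is where the $M$-dependence of the admissible $\eps$ enters.
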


To prove Proposition \ref{prop:long}, we use the 
Leibniz rule for the fractional derivatives obtained by 
\cite{CW} and \cite{KPV}.

\begin{lemma}\label{2.5} 
Assume $\beta \in (0,1)$. Let $p, p_1, p_2, q, q_2
\in (1, \infty)$ and $q_1 \in (1,\infty] 
$ satisfy $1/p=1/p_1 + 1/p_2$ and $1/q=1/q_{1}+1/q_{2}$. 
We also assume $F\in C^{1}(\rre;\rre)$. Then for any interval $I$, 
the inequality
\begin{equation}
\| { |D_x|^{\beta} F(f)}\|_{L_x^pL_t^q(I)}
\lesssim \| {F'(f)}\|_{L_x^{p_1}L_t^{q_1}(I)} \| {|D_x|^{\beta}f }
\|_{L_x^{p_2}L_t^{q_2}(I)}
\label{Le}
\end{equation}
holds for any $f$ satisfying $F'(f)\in L_x^{p_1}L_t^{q_1}(I)$ and 
$|D_x|^{\beta}f\in L_x^{p_2}L_t^{q_2}(I)$, 
where the implicit constant depends only on $\beta, p_{1}, p_{2}, q_{1}, q_{2}$ 
and $I$.
\end{lemma}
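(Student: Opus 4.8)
The plan is to deduce the mixed-norm estimate from the purely spatial fractional chain rule of Christ--Weinstein \cite{CW} and Kenig--Ponce--Vega \cite{KPV}, exploiting that $|D_x|^\beta$ differentiates only in the $x$-variable while $t$ enters merely as a parameter. The essential tool is the difference-quotient (Gagliardo-type) characterization of $|D_x|^\beta$, valid for $\beta\in(0,1)$ and $1<p<\infty$,
\[
\big\||D_x|^\beta g\big\|_{L_x^p}\sim_{p,\beta}\Big\|\Big(\int_\R\frac{|g(\cdot)-g(\cdot-h)|^2}{|h|^{1+2\beta}}\,dh\Big)^{1/2}\Big\|_{L_x^p},
\]
which realizes $\dot H^{\beta,p}$ as a Triebel--Lizorkin space and turns the fractional derivative into a square function in the spatial difference variable $h$. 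This representation is pointwise in $(x,t)$ up to a one-dimensional integral in $h$, and is therefore well adapted to the mixed norm $L_x^pL_t^q$. I emphasize that the $x$-outer ordering is exactly what forces this route: for the opposite ordering $L_t^qL_x^p$ one could apply the spatial estimate for each fixed $t$ and then use H\"older in $t$, but for $L_x^pL_t^q$ the spatial norm cannot be peeled off before the $t$-integration, so a genuinely mixed-norm argument is required.

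The chain rule of \cite{CW,KPV} is established through a pointwise bound in which $|D_x|^\beta F(f)(x)$ is dominated, via the mean value theorem applied to $F(f(x))-F(f(x-h))$, by a product of Hardy--Littlewood maximal functions (in $x$) of $F'(f)$ and of $|D_x|^\beta f$, with suitable exponents $r_1<p_1$ and $r_2<p_2$. I would carry this pointwise bound, which holds for each fixed $t$, directly into the mixed norm: taking $\|\cdot\|_{L_x^pL_t^q}$ of the estimate, applying H\"older's inequality in $x$ with $1/p=1/p_1+1/p_2$ and in $t$ with $1/q=1/q_1+1/q_2$, and finally removing the maximal functions. Here one uses that the spatial maximal operator $M_x$ is bounded on $L_x^{p_i}L_t^{q_i}$ for $1<p_i<\infty$, which is the Fefferman--Stein vector-valued maximal inequality with values in $L_t^{q_i}$; this reduces the right-hand side to the two input norms $\|F'(f)\|_{L_x^{p_1}L_t^{q_1}}$ and $\||D_x|^\beta f\|_{L_x^{p_2}L_t^{q_2}}$.

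The main obstacle lies in the interplay between the weak ($C^1$-only) regularity of $F$ and the $x$-outer mixed norm. Since $F'$ is not assumed to be H\"older continuous, one cannot simply factor $F'(f(x))$ out of the difference $F(f(x))-F(f(x-h))$; this is precisely the delicate point resolved in \cite{CW,KPV} by producing the subcritical maximal functions above, and I would import their fixed-$t$ spatial bound rather than reprove it. The remaining work is then analytic bookkeeping: verifying the mixed-norm maximal inequality for $M_x$ on $L_x^{p_i}(L_t^{q_i})$ and treating the endpoint $q_1=\infty$ allowed in the statement. In the latter case H\"older in $t$ degenerates to $q=q_2$, and one only needs $M_x$ bounded on $L_x^{p_1}(L_t^\infty)$, which follows at once from the scalar maximal inequality after observing $\|M_x g\|_{L_x^{p_1}L_t^\infty}\le\big\|M_x\big(\|g(\cdot,\cdot)\|_{L_t^\infty}\big)\big\|_{L_x^{p_1}}\lesssim\|g\|_{L_x^{p_1}L_t^\infty}$ for $1<p_1<\infty$.
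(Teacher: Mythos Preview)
Your proposal is sound, but note that the paper does not actually give a proof: it simply cites \cite[Proposition~3.1]{CW} and \cite[Theorem~A.6]{KPV}, with an additional pointer to \cite[Lemma~3.7]{MS} for an alternative argument. In particular, \cite[Theorem~A.6]{KPV} already states the chain rule directly in the mixed $L_x^pL_t^q$ norms used here, so the paper treats the lemma as a black box from the literature.

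What you have written is, in effect, a sketch of how those cited proofs run and how they extend to the $x$-outer mixed norms: the square-function realization of $|D_x|^\beta$, the pointwise domination by maximal functions of $F'(f)$ and $|D_x|^\beta f$ coming from the mean-value estimate, H\"older in the mixed norm, and the Fefferman--Stein vector-valued maximal inequality to bound $M_x$ on $L_x^{p_i}(L_t^{q_i})$. This is exactly the mechanism underlying the results in \cite{CW,KPV} (and the variant in \cite{MS}), so your approach is not genuinely different from the literature the paper invokes; it is a correct elaboration of it. Your handling of the endpoint $q_1=\infty$ via the scalar maximal inequality applied to $\|g(x,\cdot)\|_{L_t^\infty}$ is also the standard way that case is absorbed.
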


\begin{proof}[Proof of Lemma \ref{2.5}]
See \cite[Proposition 3.1]{CW} and \cite[Theorem A.6]{KPV}. 
Note that the alternative proof of the inequality (\ref{Le})
can be found in \cite[Lemma 3.7]{MS}. 
\end{proof}

\begin{lemma}\label{2.4} 
Let $\beta \in (0,1), \beta_1, \beta_2 \in [0,\beta]$ 
satisfy $\beta= \beta_1 + \beta_2$ and let  
$p, p_1, p_2, q, q_1, q_2\in (1, \infty) $ satisfy $1/p=1/p_1 + 1/p_2$ 
and $1/q=1/q_1 + 1/q_2$. 
Then for all interval $I$, 
the inequality
\begin{eqnarray*}
\lefteqn{\| { |D_x|^{\beta}(fg) - f |D_x|^{\beta} g - g |D_x|^{\beta} f }\|_{L_x^pL_t^q(I)}}
\qquad\qquad\\
&\le& C \| {|D_x|^{\beta_1}f}\|_{L_x^{p_1}L_t^{q_1}(I)} 
\| {|D_x|^{\beta_2}g }\|_{L_x^{p_2}L_t^{q_2}(I)}
\end{eqnarray*}
holds for any $f$ and $g$ satisfying 
$|D_x|^{\beta_1}f\in L_x^{p_1}L_t^{q_1}(I)$ 
and $|D_x|^{\beta_2}g\in L_x^{p_2}L_t^{q_2}(I)$, 
where the implicit constant 
depends only on $\beta_{1}, \beta_{2}, p_{1}, p_{2}, q_{1}, q_{2}$ 
and $I$.
\end{lemma}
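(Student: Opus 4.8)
The plan is to view the left-hand side as a bilinear Fourier multiplier acting only in the space variable $x$, with $t$ entering merely as a parameter, and to use the commutator structure to tame the singularities of its symbol. Writing $B(f,g):=|D_x|^{\beta}(fg)-f|D_x|^{\beta}g-g|D_x|^{\beta}f$, the operator $B$ has $x$-frequency symbol $\sigma(\xi,\eta)=|\xi+\eta|^{\beta}-|\xi|^{\beta}-|\eta|^{\beta}$, and the point of subtracting $f|D_x|^{\beta}g$ and $g|D_x|^{\beta}f$ is exactly to remove the two non-integrable pieces $|\eta|^{\beta}$ and $|\xi|^{\beta}$ that would otherwise obstruct the bound. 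This is made transparent by the pointwise identity
\[
B(f,g)(x)=-c_{\beta}\int_{\R}\frac{(f(x)-f(x-y))(g(x)-g(x-y))}{|y|^{1+\beta}}\,dy,\qquad 0<\beta<1,
\]
which follows from $|D_x|^{\beta}h(x)=c_{\beta}\int_{\R}(h(x)-h(x-y))|y|^{-1-\beta}\,dy$ after the elementary algebraic cancellation. It exhibits $B$ as a paired double increment, already separating the two factors that will absorb the two fractional derivatives on the right-hand side, and it shows that the would-be resonant (diagonal) singularity is integrable precisely because $\beta<1$.

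First I would factor out the derivatives by setting $F=|D_x|^{\beta_1}f$ and $G=|D_x|^{\beta_2}g$, so that the renormalized symbol $m(\xi,\eta):=\sigma(\xi,\eta)\,|\xi|^{-\beta_1}|\eta|^{-\beta_2}$ is homogeneous of degree $\beta-\beta_1-\beta_2=0$. I would then run a Littlewood--Paley decomposition in $x$ and split into the three paraproduct regimes $|\xi|\sim|\eta|$, $|\xi|\gg|\eta|$, and $|\eta|\gg|\xi|$. On the regular part of each regime, $m$ is a bounded degree-$0$ symbol obeying Coifman--Meyer-type derivative bounds, so the bilinear Coifman--Meyer multiplier theorem yields the product estimate with the Hölder relation $1/p=1/p_1+1/p_2$; the residual diagonal/resonant contribution is handled directly through the double-increment representation above. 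Summing over Littlewood--Paley scales converges because $\beta_1,\beta_2\ge 0$ with $\beta_1+\beta_2=\beta$ leaves exactly the required margin. Since every operator here acts only in $x$, the time variable is a spectator: one applies the scalar bilinear bound for a.e.\ fixed $t$ and then closes with Hölder's inequality in $t$ using $1/q=1/q_1+1/q_2$, equivalently invoking the mixed-norm form of Coifman--Meyer. All exponents lie in $(1,\infty)$, so no endpoint difficulties arise.

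The main obstacle is that the renormalized symbol $m$ is not globally smooth: it fails the Coifman--Meyer regularity hypotheses at the frequency axes $\{\xi=0\}$ and $\{\eta=0\}$ and at the diagonal $\{\xi=-\eta\}$. The crux of the argument is therefore to localize to each frequency regime and to use the commutator cancellation to replace the singular part of $\sigma$ by a manifestly controllable one before applying the multiplier theorem, while simultaneously carrying the mixed $L^p_xL^q_t$ norm through every bilinear estimate. I expect the bookkeeping—tracking how the total derivative $\beta$ is allocated between $f$ and $g$ in each regime, and verifying the convergence of the scale summation—to be the technical heart; once it is in place, the estimate follows along the lines of Christ--Weinstein \cite{CW} and Kenig--Ponce--Vega \cite{KPV}.
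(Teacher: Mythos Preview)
Your sketch is sound and follows the standard route to this commutator estimate: the double-difference integral representation isolates the cancellation, the renormalized symbol is degree zero and amenable to paraproduct/Coifman--Meyer analysis away from the axes and the diagonal, and time enters only through H\"older. This is essentially the argument behind \cite[Theorem~A.8]{KPV}, which is exactly what the paper invokes: the paper's own ``proof'' of this lemma is simply the one-line citation ``See \cite[Theorem~A.8]{KPV}'' with no further detail. So you are not taking a different route from the paper; you are outlining the content of the reference the paper defers to.

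One small remark: the full Coifman--Meyer machinery is slightly heavier than necessary here. In \cite{KPV} the estimate is obtained more directly from the pointwise double-increment formula you wrote down, combined with a dyadic splitting in $|y|$ and H\"older, which avoids having to verify symbol regularity off the singular sets. Either path works, but if you want a self-contained write-up the integral-representation route is shorter and sidesteps the bookkeeping you flagged as the ``technical heart.''
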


\begin{proof}[Proof of Lemma \ref{2.4}] 
See \cite[Theorem A.8]{KPV}. 
\end{proof}

\begin{proof}[Proof of Proposition \ref{prop:long}]
It suffices to consider the case $\inf I=t_0$. 
The general case follows by 
splitting $I=(I \cap [t_0,\infty)) \cup (I \cap (-\infty,t_0])$ 
and applying the time reversal symmetry to estimate the latter.
Further, we may let $t_0=0$ without loss of generality
by the time translation symmetry.

By the assumption (\ref{ba1}), we see that 
for any $\eta>0$ there exist $N=N(M,\eta)$ 
and a subdivision $\{t_j\}_{j=0}^N$
of $[0,\infty)$ with 
$0=t_0<t_1<\cdots<t_N=+\infty$ such that 
\[
	\|\widetilde{u}\|_{S(I_j)}+\|\widetilde{u}\|_{X(I_j)}<\eta
\]
holds for all $i \in [1,N]$,
where $I_j:=[t_{j-1},t_{j})$.

Let us first consider the equation for $w:=u-\widetilde{u}$ on $I_1=[0,t_1)$:
\begin{eqnarray}
	w(t)  &=& \mu \int_0^t V(t-\tau)\partial_x (|w+\widetilde{u}|^{2\alpha}(w+\widetilde{u})- |\widetilde{u}|^{2\alpha} \widetilde{u} ) d\tau \label{w}\\
	& &+N(t),\nonumber
\end{eqnarray}
where
\[
	N(t):=V(t)(u_0-\widetilde{u}(0))
	- \int_0^t V(t-\tau)\mathcal{E}(\tau)  d\tau.
\]
By Proposition \ref{inho} and Lemma \ref{2.5}, 
we obtain
\begin{align*}
	\|w\|_{S(I_1)\cap X(I_1)} \le{}&
	\|N\|_{S(I_1) \cap X(I_1)} \\
	&+ C (\|w\|_{X(I_1)} +\|\widetilde{u}\|_{X(I_1)})
	(\|w\|_{S(I_1)}^{2\alpha-1} +\|\widetilde{u}\|_{S(I_1)}^{2\alpha-1}) \|w\|_{S(I_1)} \\
	&+ C (\|w\|_{S(I_1)}^{2\alpha} +\|\widetilde{u}\|_{S(I_1)}^{2\alpha}) \|w\|_{X(I_1)} \\
	\le{}& \eps + C (\|w\|_{X(I_1)} +\eta)(\|w\|_{S(I_1)}^{2\alpha-1} +\eta^{2\alpha-1}) \|w\|_{S(I_1)} \\
	&+ C (\|w\|_{S(I_1)}^{2\alpha} +\eta^{2\alpha}) \|w\|_{X(I_1)} \\
	\le{}& \eps + C \eta^{2\alpha} \|w\|_{S(I_1)\cap X(I_1)} + C \|w\|_{S(I_1)\cap X(I_1)}^{2\alpha+1}.
\end{align*}
We remark that $C$ can be chosen independently of $M$, $\eta$, and $\eps$.
If $\eta$ is small then
this implies
\[
	\|w\|_{S(I_1)\cap X(I_1)} \le 2\eps + 2C \|w\|_{S(I_1)\cap X(I_1)}^{2\alpha+1}.
\]
There exists a constant $\delta>0$ such that 
if $2\eps\le \delta$ then this implies that
\[
	\|w\|_{S(I_1)\cap X(I_1)} \le 4\eps.
\]

Now, let $j\in[2,N]$ and
suppose that we can choose $\eps_{j-1}$ so that if $\eps \le \eps_{j-1}$ then
\[
	\|w\|_{S(I_k)\cap X(I_k)} \le 4^k \eps \le \eta
\]
holds for $k\in[1,j-1]$.
Let us next consider the equation (\ref{w}) 
for $w$ on $I_j=[t_{j-1},t_j)$. We rewrite (\ref{w}) as 
\begin{align*}
	w(t)
	={}&  \mu \int_0^{t_{j-1}} V(t-\tau)\partial_x (|w+\widetilde{u}|^{2\alpha}(w+\widetilde{u})- |\widetilde{u}|^{2\alpha} \widetilde{u} ) d\tau \\
	&+\mu \int_{t_{j-1}}^t V(t-\tau) \pt_x (|w+\widetilde{u}|^{2\alpha}(w+\widetilde{u})- |\widetilde{u}|^{2\alpha} \widetilde{u} ) d\tau 
	+N(t).
\end{align*}
By Proposition \ref{inho}, one has
\begin{align*}
	&\left\| \int_0^{t_{j-1}} V(t-\tau)\partial_x 
	(|w+\widetilde{u}|^{2\alpha}(w+\widetilde{u})
	- |\widetilde{u}|^{2\alpha} \widetilde{u} ) 
	d\tau\right\|_{S(I_j)\cap X(I_j)} \\
	&= \left\| \int_0^{t} V(t-\tau){\bf 1}_{[0,t_{j-1})}(\tau) 
	\partial_x (|w+\widetilde{u}|^{2\alpha}(w+\widetilde{u})
	- |\widetilde{u}|^{2\alpha} \widetilde{u} ) 
	d\tau\right\|_{S(I_j)\cap X(I_j)} \\
	&\le \left\| \int_0^{t} V(t-\tau){\bf 1}_{[0,t_{j-1})}(\tau) 
	\partial_x (|w+\widetilde{u}|^{2\alpha}(w+\widetilde{u})
	- |\widetilde{u}|^{2\alpha} \widetilde{u} ) 
	d\tau\right\|_{S([0,t_j))\cap X([0,t_j))} \\
	&\lesssim \|{\bf 1}_{[0,t_{j-1})} 
	\partial_x (|w+\widetilde{u}|^{2\alpha}(w+\widetilde{u})
	- |\widetilde{u}|^{2\alpha} \widetilde{u} )\|_{Y([0,t_j))} \\
	&= \| \partial_x (|w+\widetilde{u}|^{2\alpha}(w+\widetilde{u})
	- |\widetilde{u}|^{2\alpha} \widetilde{u} )\|_{Y([0,t_{j-1}))}\\
	& \le \sum_{k=1}^{j-1} \| 
	\partial_x (|w+\widetilde{u}|^{2\alpha}(w+\widetilde{u})
	- |\widetilde{u}|^{2\alpha} \widetilde{u} )\|_{Y(I_k)}\\
	& \le \sum_{k=1}^{j-1} 2C\eta^{2\alpha} 4^k\eps 
	\le \frac83 C \eta^{2\alpha} 4^{j-1} \eps.
\end{align*}
Hence, 
\begin{align*}
	\|w\|_{S(I_j)\cap X(I_j)} \le{}&
	\|N\|_{S(I_j) \cap X(I_j)} 
	+ \frac83 C \eta^{2\alpha} 4^{j-1} \eps\\
	&+ C (\|w\|_{X(I_j)} 
	+\|\widetilde{u}\|_{X(I_j)})
	(\|w\|_{S(I_j)}^{2\alpha-1} 
	+\|\widetilde{u}\|_{S(I_j)}^{2\alpha-1}) \|w\|_{S(I_j)} \\
	&+ C (\|w\|_{S(I_j)}^{2\alpha} 
	+\|\widetilde{u}\|_{S(I_j)}^{2\alpha}) \|w\|_{X(I_j)} \\
	\le{}& \eps +\frac83 C \eta^{2\alpha} 4^{j-1} \eps
	+ C \eta^{2\alpha} \|w\|_{S(I_j)\cap X(I_j)} 
	+ C \|w\|_{S(I_j)\cap X(I_j)}^{2\alpha+1}.
\end{align*}
Letting $\eta$ even smaller if necessary, we have $C\eta^{2\alpha} \le \frac14$ and hence
\[
	\|w\|_{S(I_j)\cap X(I_j)} \le
	\tfrac43(1 +\tfrac23 4^{j-1}) \eps + 2C \|w\|_{S(I_j)\cap X(I_j)}^{2\alpha+1}.
\]
Hence, if $\eps \le \min ( \frac23(1 +\tfrac23 4^{j-1})^{-1} \delta, 4^{-j}\eta,\eps_{j-1}) =:\eps_j$ then
\[
	\|w\|_{S(I_j)\cap X(I_j)} \le \tfrac83(1 +\tfrac23 4^{j-1}) \eps \le 4^j \eps \le \eta.
\]
Hence, by induction, we can choose $\eps_N$ such that if 
$\eps \le \eps_N$ then
\[
	\|w\|_{S(I_j)\cap X(I_j)} \le 4^j \eps \le \eta
\]
holds for $j\in[1,N]$.
Combining this estimate and noting that $N$ depends on $M$, we obtain
\[
	\|w\|_{S(I)\cap X(I)} \lesssim_{M} \eps.
\]
%
\end{proof}

In the end of this section, we prove the compactness of the embedding 
$H^1 \cap H^{0,1} \hookrightarrow \hat{L}^\alpha$.

\begin{lemma}\label{L:tb}
The embedding $H^1 \cap H^{0,1} \hookrightarrow \hat{L}^\alpha$ 
is compact for $1\le \alpha \le \infty$.
\end{lemma}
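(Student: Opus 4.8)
The plan is to transfer the whole statement to the Fourier side, where the two defining conditions become a regularity and a weight condition on a single function. By Plancherel together with the identities $\widehat{\langle D_x\rangle f}=\langle\xi\rangle\hat f$ and $\widehat{xf}=i\partial_\xi\hat f$, a function $f$ lies in $H^1\cap H^{0,1}$ if and only if $g:=\hat f$ satisfies $\langle\xi\rangle g\in L^2$ and $\partial_\xi g\in L^2$, with comparable norms; set $\|g\|_Z:=\|\langle\xi\rangle g\|_{L^2}+\|\partial_\xi g\|_{L^2}$. Since $\|f\|_{\hat{L}^\alpha}=\|g\|_{L^{\alpha'}}$ and, as $\alpha$ ranges over $[1,\infty]$, its conjugate $\alpha'$ ranges over all of $[1,\infty]$, it suffices to show that the identity map $Z\hookrightarrow L^p(\R)$ is compact for every $p\in[1,\infty]$.

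I would prove this by the standard two-ingredient scheme: local compactness together with uniform decay of tails. For the local part, a bound $\|g\|_Z\le 1$ forces a uniform bound in $H^1([-R,R])$ for each fixed $R$; by the one-dimensional embedding $H^1\hookrightarrow C^{0,1/2}$ the corresponding family is uniformly bounded and equicontinuous on $[-R,R]$, so Arzel\`a--Ascoli yields compactness of the restriction $Z\to C([-R,R])\hookrightarrow L^p([-R,R])$ for every $p\le\infty$.

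For the tail part I would show $\sup_{\|g\|_Z\le1}\|g\|_{L^p(|\xi|>R)}\to 0$ as $R\to\infty$. The weight makes the endpoints immediate. For $p=2$ one has $\|g\|_{L^2(|\xi|>R)}\le\langle R\rangle^{-1}\|\langle\xi\rangle g\|_{L^2}$; for $p=1$, Cauchy--Schwarz gives $\|g\|_{L^1(|\xi|>R)}\le\|\langle\xi\rangle^{-1}\|_{L^2(|\xi|>R)}\|\langle\xi\rangle g\|_{L^2}$, which tends to $0$ because $\langle\xi\rangle^{-2}$ is integrable in one dimension; for $p=\infty$ the fundamental theorem of calculus gives $\|g\|_{L^\infty(|\xi|>R)}^2\lesssim \|g\|_{L^2(|\xi|>R)}\|\partial_\xi g\|_{L^2}\le\langle R\rangle^{-1}\|g\|_Z^2$. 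The intermediate exponents then follow by interpolating these tail bounds (or directly by a weighted H\"older inequality, using that $\langle\xi\rangle^{-2\alpha'/(2-\alpha')}$ is integrable for $1\le\alpha'<2$). With both ingredients in hand, a diagonal extraction over $R\in\N$ produces, from any $Z$-bounded sequence, a subsequence Cauchy in $L^p([-R,R])$ for every $R$; the uniform tail bound then upgrades this to a Cauchy sequence in $L^p(\R)$, which gives compactness.

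The only genuinely delicate point is the endpoint $p=\infty$, i.e.\ $\alpha=1$: unlike the other exponents it is not controlled by a weighted $L^2$ estimate alone, but requires combining the decay of $\|g\|_{L^2(|\xi|>R)}$ coming from the $H^1$-weight with the derivative bound $\partial_\xi g\in L^2$ coming from $H^{0,1}$. I would also emphasize that every tail estimate above relies on $\langle\xi\rangle^{-2}\in L^1(\R)$, which is special to dimension one, so the argument is genuinely one-dimensional.
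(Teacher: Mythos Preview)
Your argument is correct and self-contained. After passing to the Fourier side you are essentially proving directly that $H^1\cap H^{0,1}\hookrightarrow L^p(\R)$ is compact for every $p\in[1,\infty]$, via Arzel\`a--Ascoli on compacta together with explicit tail decay; all the estimates you state are valid, including the endpoint $p=\infty$ where both the weight and the derivative genuinely have to cooperate.

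The paper takes a shorter but less self-contained route: it factors the embedding as
\[
H^1\cap H^{0,1}\hookrightarrow H^{3/4}\cap H^{0,3/4}\hookrightarrow \hat L^\alpha,
\]
notes that the first inclusion is compact (a Rellich-type fact for weighted Sobolev spaces), and that the second is continuous because $H^{3/4}\cap H^{0,3/4}$ is Fourier-invariant and embeds into $L^1\cap L^\infty$, hence into every $L^{\alpha'}$. Your approach has the virtue of being fully explicit and not invoking the intermediate compact embedding as a black box; the paper's approach is more concise and makes transparent that any simultaneous loss of regularity and weight already suffices. Ultimately, though, the compactness of $H^1\cap H^{0,1}\hookrightarrow H^{3/4}\cap H^{0,3/4}$ rests on the same local-compactness-plus-tails mechanism that you spelled out by hand.
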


\begin{proof}[Proof of Lemma \ref{L:tb}]
It is an immediate consequence of the embedding 
$H^{3/4} \cap H^{0,3/4} \hookrightarrow L^1 \cap L^\infty$ 
holds and 
the fact that the embedding $H^1 \cap H^{0,1} 
\hookrightarrow H^{3/4} \cap H^{0,3/4}$ is compact.
\end{proof}


\section{Proof of Theorems \ref{T:lwp} and \ref{T:bu}} 

In this section, we prove local well-posedenss and blowup alternative.
Fix $t_0$ and $u_0 \in H^1$ such that $J(t_0)u_0 \in L^2$.
Note that $u_0 \in H^1 \cap \hat{L}^\alpha$.
Indeed,
\begin{equation}\label{E:hLaJ}
\begin{aligned}
	\|\mathcal{F} u_0\|_{L^{\alpha'}}
	&=\| \mathcal{F} V(-t_0)u_0 \|_{L^{\alpha'}}\\
	&{}\lesssim 
	\| \mathcal{F} V(-t_0)u_0 \|_{L^2}^{\frac{3\alpha-2}{2\alpha}}
	\| \partial_\xi \mathcal{F} V(-t_0)u_0 \|_{L^2}^{\frac{2-\alpha}{2\alpha}}\\
	&{}=\|u_0\|_{L^2}^{\frac{3\alpha-2}{2\alpha}}
	\|J(t_0)u_0\|_{L^2}^{\frac{2-\alpha}{2\alpha}}<\infty.
\end{aligned}
\end{equation}
Hence, by the local well-posedness result in $\hat{L}^\alpha \cap H^1$ \cite[Theorem 1.5]{MS}, one obtains
a $\hat{L}^\alpha \cap H^1$-solution $u$ to \eqref{gKdV} in a neighborhood $I$ of $t_0$. In particular, one has
\[
	\| u \|_{L^\infty_t(I;H^1_x(\R))}+\sum_{k=1}^2\| \partial_x^k u \|_{L^\infty_x(\R;L^2_t(I))}
	\lesssim \|u_0\|_{H^1}. 
\]
We note that the size of the neighborhood is chosen so that $\|V(t-t_0)u_0\|_{S(I)} $ is smaller than a universal constant. 
Hence, what we have to do is to show that the $H^1 \cap \hat{L}^\alpha$-solution $u$ is a $H^1 \cap H^{0,1}$-solution.
To this end, we estimate $Ju$ by considering 
\begin{eqnarray*}
v:=Ju+3\mu t|u|^{2\alpha}u 
\end{eqnarray*}
defined in \eqref{v}.
We further introduce
\begin{eqnarray*}
P:=x\pt_x+3t\pt_t.
\end{eqnarray*}
We have the identity
\begin{equation}\label{E:Puv}
	\partial_x v = Pu + u.
\end{equation}

Before the proof, let us 
derive an equation for $Ju$ and $Pu$.
We also confirm that $v$ solves \eqref{E:v}.
Let $L=\pt_t+\pt_x^3$. 
Suppose that $u \in C(I;H^1)$ solves
\[
	Lu = \mu \partial_x (|u|^{2\alpha} u)
\]
in the distribution sense.
Let us note beforehand that the following calculation is valid in the distribution sense.
Operating $J$ to the both sides 
 and noting $[L,J]=0$, we see
\begin{eqnarray*}
LJu&=&\mu J\pt_x(|u|^{2\alpha}u).
\end{eqnarray*}
It holds that
\begin{eqnarray}
J\pt_x=P-3tL.
\end{eqnarray}
Hence, we have
\begin{eqnarray}
LJu&=&\mu P(|u|^{2\alpha}u)-3\mu tL(|u|^{2\alpha}u)
\label{L1}
\\
&=&(2\alpha+1)\mu|u|^{2\alpha}Pu-3\mu tL(|u|^{2\alpha}u).\nonumber
\end{eqnarray}
Since $[J,\pt_x]=-1$, another use of the above identity yields
\begin{eqnarray}
Pu&=&J\pt_xu+3tLu=\pt_xJu-u+3\mu t\pt_x(|u|^{2\alpha}u)
\label{L2}\\
&=&\pt_xv-u.\nonumber
\end{eqnarray}
This is \eqref{E:Puv}.
Furthermore, since $[L,t]=1$, we have
\begin{eqnarray}
tL(|u|^{2\alpha}u)=Lt(|u|^{2\alpha}u)-|u|^{2\alpha}u.
\label{L3}
\end{eqnarray}
Substituting (\ref{L2}) and (\ref{L3}) into (\ref{L1}), 
we obtain
\begin{eqnarray*}
Lv=(2\alpha+1)\mu|u|^{2\alpha}\pt_xv
-2(\alpha-1)\mu|u|^{2\alpha}u,
\end{eqnarray*}
which is nothing but \eqref{E:v}.
On the other hand, if we operate $P$ to the  equation for $u$, 
we obtain
\[
	PLu = \mu P \pt_x (|u|^{2\alpha}u).
\]
Using the relations $[P,L]=-3L$ and $[P,\pt_x]=-\pt_x$, 
we obtain
\begin{align}
	LPu ={}& 3L u + \mu \pt_x P (|u|^{2\alpha}u)
	- \mu \pt_x (|u|^{2\alpha}u)\label{L4} \\
	={}& (2\alpha+1)\mu \pt_x  (|u|^{2\alpha}Pu)
	+ 2\mu \pt_x (|u|^{2\alpha}u).\nonumber
\end{align}

Thus, we see from \eqref{L1} that
\begin{equation}\label{E:Ju}
	\partial_t (Ju) + \partial_x^3 (Ju) = (2\alpha+1)\mu|u|^{2\alpha}Pu-3\mu t(\pt_t + \pt_x^3)(|u|^{2\alpha}u).
\end{equation}
Further, by (\ref{L4}),
\begin{equation}\label{E:Pu}
	\partial_t (Pu) + \partial_x^3 (Pu) = (2\alpha+1)\mu \pt_x(|u|^{2\alpha}Pu)+2\mu \pt_x(|u|^{2\alpha}u).
\end{equation}

The local well-posedness in the weighted Sobolev space $H^1 \cap H^{0,1}$ (Theorem \ref{T:lwp}) is a consequence of 
the following persistence-type result.
\begin{lemma}\label{L:localv}
Let $t_0 \in \R$ and let $u_0 \in \hat{L}^\alpha \cap H^1$.
Let $u(t)$ be a $\hat{L}^\alpha \cap H^1$-solution to \eqref{gKdV} under \eqref{gIC}.
There exists a constant $\delta>0$ such that
if $V(-t_0)u_0 \in H^1 \cap H^{0,1}$ then $u(t)$ is a
$H^1 \cap H^{0,1}$-solution to \eqref{gKdV} on any
interval $I\ni t_0$ satisfying $\|u\|_{S(I)} \le \delta$.
Further,  
\begin{eqnarray*}
	\lefteqn{\|Ju\|_{L^\infty_tL^2_x(I)}+\|v\|_{L^\infty_tL^2_x(I)} 
	+ \|\pt_x v \|_{L^\infty_x L^2_t(I)}}\qquad\qquad\qquad \\
	&\lesssim& 
	\|V(-t_0)u_0\|_{H^1_x} 
	+ \Jbr{t_0}\|u_0\|_{H^1_x}^{2\alpha+1},
\end{eqnarray*}
where $v$ is defined by \eqref{v}.
\end{lemma}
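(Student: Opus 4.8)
The plan is to control $v$ rather than $Ju$ directly, exploiting that the equation \eqref{E:v} for $v$ is linear in $v$ and, unlike the equation \eqref{E:Ju} for $Ju$, is written in integral form; this is precisely the reason $v$ was introduced. Writing
\[
	v(t) = V(t-t_0)v(t_0) + \int_{t_0}^{t} V(t-\tau)\big[(2\alpha+1)\mu |u|^{2\alpha}\partial_x v - 2(\alpha-1)\mu |u|^{2\alpha}u\big]\,d\tau,
\]
I would run a linear a priori estimate for $v$ in a norm $X(I)$ built from the $L^2$-based Airy estimates: the energy norm $\|v\|_{L^\infty_t L^2_x(I)}$, the local smoothing norm $\|\partial_x v\|_{L^\infty_x L^2_t(I)}$ (both of which appear in the conclusion), together with one auxiliary Strichartz norm needed to absorb the nonlinearity. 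The data term is immediate: since $V(t_0)$ is an $L^2$-isometry, $\|J(t_0)u_0\|_{L^2}=\|xV(-t_0)u_0\|_{L^2}\le \|V(-t_0)u_0\|_{H^{0,1}_x}$, while $\|3\mu t_0|u_0|^{2\alpha}u_0\|_{L^2}\lesssim \Jbr{t_0}\|u_0\|_{L^{2(2\alpha+1)}_x}^{2\alpha+1}\lesssim \Jbr{t_0}\|u_0\|_{H^1_x}^{2\alpha+1}$ by the one-dimensional Sobolev embedding, so $\|v(t_0)\|_{L^2}$ is bounded by the right-hand side of the claim.

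For the nonlinear contributions I would use the homogeneous and inhomogeneous Airy smoothing and Strichartz estimates (Kenig--Ponce--Vega, in the $L^2$-based form already invoked for the $H^1$ bound on $u$). The source term $|u|^{2\alpha}u$ carries no $v$ and is controlled by $\|u\|_{S(I)}^{2\alpha}$ times a Strichartz norm of $u$, both finite from the $\hat{L}^\alpha\cap H^1$ theory, contributing $\lesssim \|u_0\|_{H^1_x}^{2\alpha+1}$. The delicate term is $(2\alpha+1)\mu|u|^{2\alpha}\partial_x v$, which is first order in $v$: here the one-derivative gain of the Airy local smoothing is used to recover $\partial_x v$, while $|u|^{2\alpha}$ is placed in the norm $L^{5/4}_x L^{5/2}_t = \|u\|_{S(I)}^{2\alpha}$, so that the coefficient of the $v$-factor is $\lesssim \|u\|_{S(I)}^{2\alpha}\le\delta^{2\alpha}$. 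Choosing $\delta$ small lets this term be absorbed into the left-hand side; when $\|u\|_{S(I)}$ is only finite I would subdivide $I$ into finitely many subintervals of small $S$-norm and iterate exactly as in the proof of Proposition \ref{prop:long}. This yields $\|v\|_{X(I)}\lesssim \|v(t_0)\|_{L^2}+\|u_0\|_{H^1_x}^{2\alpha+1}$, hence the asserted bounds on $\|v\|_{L^\infty_t L^2_x}$ and $\|\partial_x v\|_{L^\infty_x L^2_t}$.

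To recover $\|Ju\|_{L^\infty_t L^2_x}$ I use $Ju = v - 3\mu t|u|^{2\alpha}u$ from \eqref{v} and estimate $\|t|u|^{2\alpha}u\|_{L^\infty_t L^2_x(I)}\lesssim \Jbr{t_0}\|u\|_{L^\infty_t L^{2(2\alpha+1)}_x}^{2\alpha+1}\lesssim\Jbr{t_0}\|u_0\|_{H^1_x}^{2\alpha+1}$, using that on the neighbourhood of $t_0$ produced by the local theory one has $|t|\lesssim\Jbr{t_0}$ together with the Sobolev embedding. The continuity $V(-t)u\in C(I;H^{0,1})$, and hence the fact that $u$ is a genuine $H^1\cap H^{0,1}$-solution, then follows from the continuity of $v$ and $u$ in $L^2$. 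To make the a priori estimate rigorous I would first run it for Schwartz data, for which \eqref{E:v} holds classically and $v(t_0)\in L^2$, and then pass to the limit using the continuity of the data-to-solution map furnished by the $\hat{L}^\alpha\cap H^1$ theory.

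The main obstacle is the first-order, non-divergence term $|u|^{2\alpha}\partial_x v$. Because $J$ does not commute well with the nonlinearity, the $Ju$-equation \eqref{E:Ju} contains the worse, $t$-weighted term $3\mu t(\partial_t+\partial_x^3)(|u|^{2\alpha}u)$, so passing to $v$ is essential; but even then, closing the estimate forces one to recover $\partial_x v$ through the Airy smoothing gain while keeping the $u$-coefficient in the small $S(I)$-norm. Designing a single norm $X(I)$ in which the energy estimate, the local smoothing estimate, and the auxiliary Strichartz estimate controlling $\partial_x v$ all hold simultaneously, with a coefficient that is genuinely small, is the technical heart of the argument.
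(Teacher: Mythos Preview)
Your approach is essentially the paper's: both control $v$ through the integral form of \eqref{E:v}, work in the norm $\|v\|_{L^\infty_t L^2_x}+\|\partial_x v\|_{L^\infty_x L^2_t}$, and absorb $|u|^{2\alpha}\partial_x v$ via $\|u\|_{S(I)}^{2\alpha}\|\partial_x v\|_{L^\infty_x L^2_t}$ with $\|u\|_{S(I)}$ small. Two points of difference are worth flagging. First, no auxiliary Strichartz norm is actually needed: the paper closes in $Z(I):=\|v\|_{L^\infty_t L^2_x}+\|\partial_x v\|_{L^\infty_x L^2_t}$ alone, handling the source $|u|^{2\alpha}u$ in $L^1_tL^2_x$ by Sobolev; this produces a factor $|I|$, whence the restriction $|I|\lesssim 1$ (implicit also in your use of $|t|\lesssim\langle t_0\rangle$). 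Second, and more substantively, the paper does not justify the estimate by approximation from Schwartz data. Instead it \emph{constructs} $v$ by running a contraction argument directly on the $v$-equation (linear in $v$, with $u$ a given coefficient), and then \emph{identifies} the fixed point with $Ju+3\mu t|u|^{2\alpha}u$ by showing that $z:=\partial_x v-u$ and $w:=v-3\mu t|u|^{2\alpha}u$ satisfy the equations \eqref{E:Pu} and \eqref{E:Ju} for $Pu$ and $Ju$ with the correct data at $t_0$. Your approximation route is workable, but be aware that continuity of the data-to-solution map in $\hat{L}^\alpha\cap H^1$ does not by itself pass $Ju_n$ to the limit; you must also run the linear difference estimate for $v_n-v_m$ to get $L^2$-convergence of $v_n$, and then recover $Ju_n\to Ju$ from $Ju_n=v_n-3\mu t|u_n|^{2\alpha}u_n$.
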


\begin{proof}[Proof of Lemma \ref{L:localv}.]  

Let us prove that the $H^1 \cap \hat{L}^\alpha$-solution 
satisfies the desired weighted estimate.
To this end, we obtain an estimate of $v$ defined in \eqref{v} 
by solving \eqref{E:v} under the initial condition
\[
	v(t_0) = v_0:=J(t_0)u_0 + 3\mu t_0 |u_0|^{2\alpha} u_0 \in L^2.
\]
For 
$R>0$ and $T>0$, we define a complete metric space
\[
	Z_{R,T}:=\{ v \in C(I_T; L^2_x)\ ; \ \| v \|_{Z(I_T)}
  \le R	\}
\]
with the distance 
\[
	d(v_1,v_2)=\| v_1-v_2\|_{Z(I_T)} ,
\]
where $I_T=(t_0-T,t_0+T)$,
\begin{equation}\label{E:ZT}
	\| v \|_{Z(I)} := \| v \|_{L^\infty_tL^2_x(I)}
	+\| \partial_x v \|_{L^\infty_xL^2_t(I)}.
\end{equation}
We suppose that $T>0$ is small so that $I_T \subset I$.
Let us prove that the map $\Phi(v) $ 
 defined by
\begin{align*}
	\Phi(v)(t) :={}& 
	V(t-t_0)v_0 +(2\alpha+1) \mu 
	\int_{t_0}^t V(t-\tau) (|u|^{2\alpha}\partial_xv)(\tau)d\tau \\
	&-2(\alpha-1)\mu \int_{t_0}^t V(t-\tau)(|u|^{2\alpha}u)(\tau)d\tau
\end{align*}
is a contraction map from $Z_{R,T}$ to itself.
 
Pick $v \in Z_{R,T}$.
By Propositions \ref{ho} and \ref{inho},
%
\begin{align*}
	\| \Phi(v) \|_{Z(I_T)} 
	&\le C \|v_0\|_{L^2_x} + C \||u|^{2\alpha} \partial_x v \|_{L^{\frac54}_xL^{\frac{10}9}_t(I_T)} + C \||u|^{2\alpha}u\|_{L^1_tL^2_x(I_T)}	\\
	&\le C \|v_0\|_{L^2_x} + C\|u\|_{S(I_T)}^{2\alpha} \| \partial_x v \|_{L^\infty_xL^2_t(I_T)} + CT \|u\|_{L^\infty_t H^1_x(I_T)}^{2\alpha+1}\\
	&\le C \|v_0\|_{L^2_x} + C\|u\|_{S(I_T)}^{2\alpha}R + CT \|u_0\|_{H^1_x}^{2\alpha+1}.
\end{align*}
We first choose $T\le 1$ so small that 
$C\|u\|_{S(I_T)}^{2\alpha} \le \frac12$
and then we let
\[
	R=2C( \|v_0\|_{L^2_x}+  \|u_0\|_{ H^1_x}^{2\alpha+1}). \]
 Then, one sees that $\Phi$ is a map from $Z_{R,T}$ to itself.
Similarly, for $v_1,v_2 \in Z_{R,T}$, one obtains
\[
	\Phi(v_1) - \Phi(v_2) = 
	(2\alpha+1) \mu \int_{t_0}^t V(t-\tau)
	(|u|^{2\alpha}\partial_x(v_1-v_2))(\tau) d\tau
\]
and hence, estimating as above, one sees that
\[
	d(\Phi(v_1) , \Phi(v_2))
	\le  C\|u\|_{S(I_T)}^{2\alpha} \| \partial_x (v_1-v_2) \|_{L^\infty_xL^2_t(I_T)}
	\le \frac12 d(v_1,v_2),
\]
which shows that $\Phi$ is a contraction map.
Thus, we see that $v\in C(I_T;L^2_x)$ obeys the bound
\[
	\|v\|_{Z(I_T)} \le R \lesssim
	\|J(t_0)u_0\|_{L^2_x}+ \Jbr{t_0} \|u_0\|_{ H^1_x}^{2\alpha+1}.
\]

So far, we construct $v$ as a solution to \eqref{E:v}.
Let us prove that $v=Ju + 3\mu t |u|^{2\alpha} u$ holds 
in the distribution sense,
which implies that
$J(t)u(t) \in C(I_T;L^2_x)$ and
\[
	\|Ju\|_{L^\infty_t L^2_x(I_T)} \lesssim \|v\|_{Z(I_T)} 
	+\Jbr{t_0} \|u\|_{L^\infty(I_T;H^1)}^{2\alpha+1}
	\lesssim
	\|J(t_0)u_0\|_{L^2}+ \Jbr{t_0} \|u_0\|_{ H^1_x}	^{2\alpha+1}.
\]
To this end, we put
\[
	z = \partial_x v - u ,\qquad
	w = v - 3\mu t |u|^{2\alpha} u.
\]
By \eqref{E:v}, one obtains
\begin{align*}
	(\pt_t+\pt_x^3)z ={}& \pt_x ((2\alpha+1)\mu  |u|^{2\alpha} \partial_x v - 2(\alpha-1) \mu |u|^{2\alpha} u ) - \mu \pt_x (|u|^{2\alpha}u) \\
	={}& (2\alpha+1)\mu  \pt_x(|u|^{2\alpha} (z+u)) - (2\alpha-1)\mu \pt_x (|u|^{2\alpha}u) \\
	={}& (2\alpha+1)\mu  \pt_x(|u|^{2\alpha} z) +2 \mu \pt_x (|u|^{2\alpha}u).
\end{align*}
Hence, $z$ solves \eqref{E:Pu} in the distribution sense.
Together with
\[
	z(t_0) = \pt_x v(t_0) - u(t_0)
	= x \pt_x u_0 + 3 t_0 (-\pt_x^3 u_0 
	+ \mu \pt_x(|u_0|^{2\alpha}u_0)) 
	= (Pu)(t_0),
\]
we see that $z=Pu$.
Hence, we further obtain
\begin{align*}
	\pt_t w+ \pt_x^3 w = {}&
	(\pt_t + \pt_x^3 )v - 3\mu |u|^{2\alpha} u 
	- 3\mu t (\pt_t+\pt_x^3) (|u|^{2\alpha}u) \\
	={}& (2\alpha+1)\mu  |u|^{2\alpha} \partial_x v 
	- (2\alpha+1) \mu |u|^{2\alpha} u 
	- 3\mu t (\pt_t+\pt_x^3) (|u|^{2\alpha}u) \\
	={}& (2\alpha+1)\mu  |u|^{2\alpha} Pu 
	- 3\mu t (\pt_t+\pt_x^3) (|u|^{2\alpha}u) ,
\end{align*}
i.e., $w$ solves \eqref{E:Ju} in the distribution sense.
Since
\[
	w(t_0) = v(t_0) - 3\mu t_0 |u_0|^{2\alpha}u_0 = J(t_0)u_0,
\]
we see that $w=Ju$.
Thus, $v= Ju + 3\mu t |u|^{2\alpha} u$ holds.
\end{proof}

We conclude this section with the proof of Theorem \ref{T:bu}.

\begin{proof}[Proof of Theorem \ref{T:bu}.]
Let $u(t)$ be a maximal-lifespan  $H^1 \cap \hat{L}^\alpha$-solution given in \cite[Theorem 1.9]{MS}.
Here, the maximal lifespan $I_{\max} = (-T_{\min},T_{\max})$ is that as a $H^1 \cap \hat{L}^\alpha$-solution.
Let us prove that this is also a
maximal-lifespan in the sense of $H^1 \cap H^{0,1}$-solution.
Recall that
$T_{\max}<\infty$ implies 
\[
	\| u\|_{S([t_0,T_{\max}))}=\infty
\]
(see \cite[Theorem 1.5]{MS}).
Hence, it suffices to prove that, for any finite $T>t_0$,
\[
	\| u\|_{S([t_0,T))}<\infty \Longrightarrow
	\|Ju\|_{L^\infty_t L^2_x ([t_0,T))}<\infty.
\]

Let $\delta>0$ be the number given in Lemma \ref{L:localv}. 
We can obtain a subdivision $\{t_j\}_{j=1}^N$
of $[t_0,T)$:
\[
	t_0<t_1<t_2<\cdots<t_N=T
\]
so that $N\lesssim_{\alpha,\|u\|_{S([t_0,T))}}1$ and
$\| u\|_{S([t_{j-1},t_j))}\le \delta$ for all $j \in [1,N]$.
By applying Lemma \ref{L:localv} to each interval $[t_{j-1},t_j)$, we obtain $\|Ju\|_{L^\infty_t L^2_x ([t_{j-1},t_j))}<\infty$ for all $j\in [1,N]$.
This implies the desired boundedness
$	\|Ju\|_{L^\infty_t L^2_x([t_0,T))}<\infty$.
This completes the proof.
\end{proof}


\section{Proof of Theorem \ref{thm1}} 

To prove Theorem \ref{thm1}, we employ 
the well-posedness result of (\ref{gKdV}) 
in the Fourier-Lebesgue space $\hat{L}^{\alpha}(\rre)$ 
mentioned in Section 2.

\begin{lemma} \label{lem1}
Let $8/5<\alpha<2$. 
Let $t_0 \in \R$ and suppose that 
$V(-t_0)u_0 \in H^1 \cap H^{0,1}$.
There exists $\varepsilon_1>0$ such that if
$\varepsilon=\|V(-t_0)u_0\|_{H^1\cap H^{0,1}}\le\varepsilon_1$ 
then  the $H^1 \cap H^{0,1}$-solution to (\ref{gKdV}) 
under \eqref{gIC} is global and satisfies
\begin{eqnarray}
\|u\|_{S(\R)}
\lesssim\varepsilon.\label{ScatteringC}
\end{eqnarray} 
\end{lemma}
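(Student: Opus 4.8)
The plan is to prove Lemma~\ref{lem1} by the standard small-data contraction/continuity argument, combining the scattering result in $\hat{L}^\alpha$ (Lemma~\ref{lem:ss}) with the weighted persistence estimate (Lemma~\ref{L:localv}), but carried out globally in time rather than on a small interval. First I would observe that the smallness assumption $\|V(-t_0)u_0\|_{H^1\cap H^{0,1}}\le\varepsilon_1$ controls $\|u_0\|_{\hat{L}^\alpha}$: indeed, by the embedding $H^1\cap H^{0,1}\hookrightarrow\hat{L}^\alpha$ recorded in the excerpt (see also the computation \eqref{E:hLaJ}, where the $\hat{L}^\alpha$ norm is bounded by a product of $\|u_0\|_{L^2}$ and $\|J(t_0)u_0\|_{L^2}$), one has $\|u_0\|_{\hat{L}^\alpha}\lesssim\varepsilon$. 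Hence, choosing $\varepsilon_1$ small enough that $\|u_0\|_{\hat{L}^\alpha}\le\tilde\varepsilon$, Lemma~\ref{lem:ss} produces a \emph{global} $\hat{L}^\alpha$-solution $u$ with the key bound $\|u\|_{S(\R)}\le 2\|u_0\|_{\hat{L}^\alpha}\lesssim\varepsilon$. This already gives the desired estimate \eqref{ScatteringC}, \emph{provided} we can upgrade this $\hat{L}^\alpha$-solution to a genuine $H^1\cap H^{0,1}$-solution that is global.

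Next I would invoke the persistence-of-regularity machinery. Since $V(-t_0)u_0\in H^1\cap H^{0,1}\subset H^1\cap\hat{L}^\alpha$, the solution is in particular a $H^1\cap\hat{L}^\alpha$-solution, and the blowup alternative (Theorem~\ref{T:bu}) tells us that the finiteness $\|u\|_{S(\R)}<\infty$ forces $T_{\max}=+\infty$ and $T_{\min}=-\infty$, so the $H^1\cap H^{0,1}$-maximal lifespan is all of $\R$. To then conclude that $u$ genuinely lies in $H^1\cap H^{0,1}$ for all time, I would subdivide $\R$ (or $[t_0,\infty)$ and $(-\infty,t_0]$ separately) into finitely many subintervals on each of which $\|u\|_{S(I_j)}\le\delta$, where $\delta$ is the threshold from Lemma~\ref{L:localv}; the global smallness $\|u\|_{S(\R)}\lesssim\varepsilon$ guarantees that the number of such subintervals is controlled. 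Applying Lemma~\ref{L:localv} on each subinterval successively propagates the weighted bound on $Ju$ (equivalently on $v=Ju+3\mu t|u|^{2\alpha}u$), so that $\|Ju\|_{L^\infty_tL^2_x}$ stays finite on every compact time interval, and $u$ is a $H^1\cap H^{0,1}$-solution on all of $\R$.

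I expect the main subtlety to be the \emph{uniformity in time} of the weighted bound, as opposed to mere finiteness on each compact interval. The estimate furnished by Lemma~\ref{L:localv} carries a factor $\Jbr{t_0}$ on the forcing term $\|u_0\|_{H^1}^{2\alpha+1}$, which reflects the failure of $V(t)$ to preserve $H^{0,1}$; iterating naively over growing time intervals could produce a bound on $\|Ju\|_{L^\infty_tL^2_x}$ that deteriorates as $t\to\infty$. For the present lemma this is not an obstruction, since the claimed conclusion \eqref{ScatteringC} is only the $S(\R)$-bound, which comes directly from Lemma~\ref{lem:ss} and does not require a uniform-in-time weighted estimate; the role of Lemma~\ref{L:localv} here is merely qualitative, to certify that the solution remains in the class $H^1\cap H^{0,1}$ and is global in that stronger sense. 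The genuinely delicate uniform weighted decay (needed for the actual scattering in $H^1\cap H^{0,1}$ asserted in Theorem~\ref{thm1}) is where the finer analysis of the equation \eqref{E:v} for $v$—exploiting that it is written in integral form so that Strichartz estimates apply—will have to be brought in, but that is deferred to the proof of Theorem~\ref{thm1} proper and is not part of this lemma.
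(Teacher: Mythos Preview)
Your proposal is correct and follows essentially the same route as the paper: control $\|u_0\|_{\hat{L}^\alpha}$ via \eqref{E:hLaJ}, apply Lemma~\ref{lem:ss} to get a global $\hat{L}^\alpha$-solution with $\|u\|_{S(\R)}\lesssim\varepsilon$, then invoke Theorem~\ref{T:bu} to conclude that the $H^1\cap H^{0,1}$-solution is global. The additional subdivision argument you sketch (applying Lemma~\ref{L:localv} on successive subintervals) is precisely the content of the proof of Theorem~\ref{T:bu}, so once you have cited that theorem there is no need to repeat it here.
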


\begin{proof}[Proof of Lemma \ref{lem1}.]  
By \eqref{E:hLaJ}, we see 
that $\|u_0\|_{\hat{L}^\alpha}\lesssim \eps$. 
Hence Lemma \ref{lem:ss} yields that 
if $\eps$ is sufficiently small, then 
there exists a global $\hat{L}^\alpha$-solution $u$
satisfying (\ref{ScatteringC}).
By Theorem \ref{T:bu}, $u$ is 
a global $H^1\cap H^{0,1}$-solution.
%
\end{proof}

%
%

\begin{lemma}\label{lem2} 
Let $8/5<\alpha<2$. 
Let $t_0 \in \R$ and suppose that 
$V(-t_0)u_0 \in H^1 \cap H^{0,1}$.
Let $u$ be the unique maximal-lifespan 
$H^1 \cap H^{0,1}$-solution to \eqref{gKdV} 
under \eqref{gIC}.
Then there exists $\delta_2>0$ such that 
if an interval $I \ni t_0$ satisfies $I \subset I_{\max}$ and
\[
	\|u\|_{S(I)} \le \delta_2
\]
then it holds that
\begin{equation}
	\|u\|_{L^\infty_t H^1_x(I)} +  \|\pt_xu\|_{L_x^{\infty}L_t^2(I)}+\|\pt_x^2u\|_{L_x^{\infty}L_t^2(I)}\lesssim \|u_0\|_{H^1_x}.
\end{equation}
In particular,
 there exists $\varepsilon_2\in (0,\eps_1]$ such that if
$\varepsilon=\|V(-t_0)u_0\|_{H^1\cap H^{0,1}}\le\varepsilon_2$, then  the solution is global and satisfies \eqref{ScatteringC} and
\begin{eqnarray}
\|u\|_{L_t^{\infty}H_x^1(\R)}+
\|\pt_xu\|_{L_x^{\infty}L_t^2(\R)}+\|\pt_x^2u\|_{L_x^{\infty}L_t^2(\R)}\lesssim\varepsilon,
\label{ScatteringD}
\end{eqnarray}
where $\eps_1$ is the number given in Lemma \ref{lem1}.
\end{lemma}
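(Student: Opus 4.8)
The plan is to reduce the statement to an a priori estimate for the solution on the whole interval $I$, closed by the smallness of $\|u\|_{S(I)}$, and then to deduce the ``in particular'' part from Lemma \ref{lem1}. Writing the solution through the Duhamel formula
\[
u(t) = V(t-t_0)u_0 + \mu \int_{t_0}^t V(t-\tau)\partial_x(|u|^{2\alpha}u)(\tau)\,d\tau,
\]
I set
\[
\Lambda(I) := \|u\|_{L^\infty_t H^1_x(I)} + \|\partial_x u\|_{L^\infty_x L^2_t(I)} + \|\partial_x^2 u\|_{L^\infty_x L^2_t(I)}.
\]
It suffices to establish $\Lambda(I) \lesssim \|u_0\|_{H^1}$ with an implicit constant independent of $I$, $t_0$ and $\|u_0\|_{H^1}$, since the second assertion then follows by choosing $\varepsilon_2 \le \varepsilon_1$ so small that the global solution produced by Lemma \ref{lem1} obeys $\|u\|_{S(\R)} \le \delta_2$, and applying the first part with $I=\R$ (recall $\|u_0\|_{H^1} = \|V(-t_0)u_0\|_{H^1}$ by unitarity of $V$ on $H^1$).

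For the linear contribution, unitarity of $V$ on $H^1$ gives $\|V(t-t_0)u_0\|_{L^\infty_t H^1} = \|u_0\|_{H^1}$, while the homogeneous local smoothing estimate (Proposition \ref{ho} in the endpoint case $(p,q,r)=(\infty,2,2)$, applied to $u_0$ and to $\partial_x u_0$) controls the two $L^\infty_x L^2_t$ pieces by $\|u_0\|_{H^1}$. For the nonlinear contribution I would use the inhomogeneous endpoint smoothing estimates, i.e. the dual and double local smoothing bounds $\|\int_{t_0}^t V(t-\tau)\partial_x G\,d\tau\|_{L^\infty_t L^2_x}\lesssim\|G\|_{L^1_x L^2_t}$ and $\|\partial_x^2\int_{t_0}^t V(t-\tau)G\,d\tau\|_{L^\infty_x L^2_t}\lesssim\|G\|_{L^1_x L^2_t}$, together with the Leibniz rule of Lemma \ref{2.5}. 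Since $u$ is real-valued one has $\partial_x(|u|^{2\alpha}u)=(2\alpha+1)|u|^{2\alpha}\partial_x u$, and a H\"older splitting based on $\||u|^{2\alpha}\|_{L^{5/4}_x L^{5/2}_t}=\|u\|_{S(I)}^{2\alpha}$ bounds every nonlinear term by $\|u\|_{S(I)}^{2\alpha}$ times a factor of the form $\|u\|_{L^5_x L^{10}_t}$ or $\|\partial_x u\|_{L^5_x L^{10}_t}$. One therefore enlarges $\Lambda$ by the two auxiliary Strichartz norms $\|u\|_{L^5_x L^{10}_t(I)}$ and $\|\partial_x u\|_{L^5_x L^{10}_t(I)}$ (both admissible at regularity $0$ by Proposition \ref{ho} with $(p,q,r)=(5,10,2)$, applied to $u$ and $\partial_x u$) and checks that these two norms close by the same scheme. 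Altogether this yields $\Lambda(I)\lesssim\|u_0\|_{H^1}+C\|u\|_{S(I)}^{2\alpha}\Lambda(I)$; choosing $\delta_2$ so that $C\delta_2^{2\alpha}\le\tfrac12$ and absorbing the last term gives the claim.

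The main obstacle is precisely that $\alpha<2$ (the mass-subcritical regime) forbids the naive route: to bound the endpoint quantity $\|\partial_x^2 u\|_{L^\infty_x L^2_t}$ one is led to estimate $\||u|^{2\alpha}\partial_x u\|_{L^1_x L^2_t}$, and pulling out the smoothing norm $\|\partial_x u\|_{L^\infty_x L^2_t}$ would require $|u|^{2\alpha}\in L^1_x L^\infty_t$, i.e. $u\in L^{2\alpha}_x L^\infty_t$, which is inadmissible since $1/(2\alpha)>1/4$. This forces the auxiliary $L^5_x L^{10}_t$ norms and the verification that the enlarged collection closes, all with constants independent of $I$ (so that $I=\R$ is permitted); obtaining the $I$-uniform endpoint inhomogeneous smoothing estimates used above --- which lie just outside the strict-inequality range of Proposition \ref{inho} --- requires the sharp Kenig--Ponce--Vega smoothing estimates combined with the Christ--Kiselev lemma, exactly as in the proof of Proposition \ref{inho}. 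A final minor point is that the absorption presupposes $\Lambda(I)<\infty$; this is secured by first running the estimate on compact subintervals $[t_0,T]\subset I$, where finiteness is guaranteed by the $H^1\cap\hat{L}^\alpha$ local theory, and then letting $T\uparrow\sup I$ using that the resulting bound is uniform in $T$.
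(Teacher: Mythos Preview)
Your argument is correct but proceeds differently from the paper. The paper places the Duhamel term in the non-endpoint dual space $L^{5/4}_x L^{10/9}_t$ (strictly inside the range of Proposition~\ref{inho}), which forces it to estimate $\|\partial_x^2(|u|^{2\alpha}u)\|_{L^{5/4}_x L^{10/9}_t}$; the cross term $|u|^{2\alpha-1}(\partial_x u)^2$ then produces the quantity $\|\partial_x u\|_{L^{5\alpha}_x L^{20\alpha/(5\alpha+2)}_t}$, which the paper handles by the interpolation
\[
\|\partial_x u\|_{L^{5\alpha}_x L^{20\alpha/(5\alpha+2)}_t}\lesssim\|u\|_{S}^{1/2}\,\|\partial_x^2 u\|_{L^\infty_x L^2_t}^{1/2},
\]
so that the original three norms close on their own with no enlargement. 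Your scheme instead routes everything through the endpoint dual $L^1_x L^2_t$; because the smoothing/double-smoothing estimates already absorb one or two derivatives, only the first-order expression $\partial_x(|u|^{2\alpha}u)=(2\alpha+1)|u|^{2\alpha}\partial_x u$ ever appears, and the problematic $(\partial_x u)^2$ term is avoided altogether --- at the price of adjoining the two $L^5_x L^{10}_t$ norms and of using endpoint inhomogeneous estimates. One small caveat: the double-endpoint bound $\|\partial_x^2\int_{t_0}^t V(t-\tau)G\,d\tau\|_{L^\infty_x L^2_t}\lesssim\|G\|_{L^1_x L^2_t}$ has equal time exponents on both sides, so it does \emph{not} follow from Christ--Kiselev as you suggest; it needs the direct Kenig--Ponce--Vega argument, though the estimate itself is standard.
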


\begin{proof}[Proof of Lemma \ref{lem2}]
The latter half follows from the former half and the previous lemma.
Hence, let us prove the former part.
We omit $(I)$ in the norm, for simplicity.

By Propositions \ref{ho} and \ref{inho}, we have
\begin{eqnarray}
\lefteqn{\|u\|_{L_t^{\infty}H_x^1}+
\|\pt_xu\|_{L_x^{\infty}L_t^2}
+\|\pt_x^2u\|_{L_x^{\infty}L_t^2}}\label{sharp}\\
&\lesssim&\|u_0\|_{H_x^1}
+\|\pt_x(|u|^{2\alpha}u)\|_{L_x^{\frac54}L_t^{\frac{10}{9}}}
+\|\pt_x^2(|u|^{2\alpha}u)\|_{L_x^{\frac54}L_t^{\frac{10}{9}}}
\nonumber\\
&\lesssim&\|u_0\|_{H_x^1}
+\|u\|_{S}^{2\alpha}
\|\pt_xu\|_{L_x^{\infty}L_t^2}
+\|u\|_{S}^{2\alpha-1}
\|\pt_xu\|_{L_x^{5\alpha}L_t^{\frac{20\alpha}{5\alpha+2}}}^2
\nonumber\\
& &+\|u\|_{S}^{2\alpha}
\|\pt_x^2u\|_{L_x^{\infty}L_t^2}.
\nonumber
\end{eqnarray}
Since
\begin{eqnarray*}
\|\pt_xu\|_{L_x^{5\alpha}L_t^{\frac{20\alpha}{5\alpha+2}}}
\lesssim\|u\|_{S}^{\frac12}
\|\pt_x^2u\|_{L_x^{\infty}L_t^2}^{\frac12},
\end{eqnarray*}
substituting this and Lemma \ref{lem1} into (\ref{sharp}), we 
obtain
\begin{eqnarray*}
\lefteqn{\|u\|_{L_t^{\infty}H_x^1}+
\|\pt_xu\|_{L_x^{\infty}L_t^2}+\|\pt_x^2u\|_{L_x^{\infty}L_t^2}}\\
&\lesssim&
\|u_0\|_{H_x^1}
+\|u\|_{S}^{2\alpha}
(\|\pt_xu\|_{L_x^{\infty}L_t^2}+\|\pt_x^2u\|_{L_x^{\infty}L_t^2})\\
&\lesssim&
\|u_0\|_{H_x^1}+\delta_2^{2\alpha}
(\|\pt_xu\|_{L_x^{\infty}L_t^2}+\|\pt_x^2u\|_{L_x^{\infty}L_t^2}).
\end{eqnarray*}
Hence if $\delta_2$ is sufficiently small, 
then we have the desired estimate. 
\end{proof}

\begin{corollary}\label{C:h1}
Let $8/5<\alpha<2$. 
Let $t_0 \in \R$ and suppose that $V(-t_0)u_0 \in H^1 \cap H^{0,1}$.
Let $u$ be the unique maximal-lifespan $H^1 \cap H^{0,1}$-solution to \eqref{gKdV} under \eqref{gIC}.
If $\|u\|_{S(I)}<\infty$ holds for an interval $I$ then we have
\begin{eqnarray*}
\|u\|_{L_t^{\infty}H_x^1(I)}+
\|\pt_xu\|_{L_x^{\infty}L_t^2(I)}+\|\pt_x^2u\|_{L_x^{\infty}L_t^2(I)}<\infty.
\end{eqnarray*}
\end{corollary}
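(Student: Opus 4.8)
The plan is to mirror the finite-subdivision argument of Theorem \ref{T:bu}, promoting the small-$S$-norm estimate of Lemma \ref{lem2} to the whole interval $I$ by chaining it over finitely many pieces. The first point I would record is that the three quantities to be controlled are a priori finite \emph{pointwise in time}. Indeed, since $u$ is a $H^1 \cap H^{0,1}$-solution we have $V(-t)u(t) \in C(I;H^1 \cap H^{0,1})$, and because $V(t)$ is a Fourier multiplier of unit modulus commuting with $\pt_x$ it is an isometry on $H^1$; hence $\|u(t)\|_{H^1} = \|V(-t)u(t)\|_{H^1}$ is finite and depends continuously on $t \in I$. In particular $V(-t)u(t) \in H^1 \cap H^{0,1}$ at every $t \in I$, so Lemma \ref{lem2} may legitimately be applied with its base point taken to be any point of $I$.

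Let $\delta_2>0$ be the constant furnished by Lemma \ref{lem2}. Using $\|u\|_{S(I)}<\infty$ together with the absolute continuity of the $S$-norm, I would fix a reference time $t_0 \in I$ and choose a finite partition of $I$ into subintervals $I_1,\dots,I_N$ (with $N$ controlled by $\alpha$, $\delta_2$ and $\|u\|_{S(I)}$) such that $\|u\|_{S(I_j)} \le \delta_2$ for every $j$, exactly as in the proof of Theorem \ref{T:bu}. Applying Lemma \ref{lem2} on each $I_j$, with base point $\tau_j$ the endpoint of $I_j$ nearest $t_0$ and with data the corresponding trace of $u$ (and invoking the time-reversal symmetry of \eqref{gKdV} for the pieces lying to the left of $t_0$), yields
\[
\|u\|_{L_t^{\infty}H_x^1(I_j)} + \|\pt_xu\|_{L_x^{\infty}L_t^2(I_j)} + \|\pt_x^2u\|_{L_x^{\infty}L_t^2(I_j)} \lesssim \|u(\tau_j)\|_{H^1_x} < \infty
\]
for each $j$, the right-hand side being finite by the first paragraph.

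It then remains to reassemble these finitely many finite bounds. For the $L_t^\infty H_x^1$ term one simply takes the maximum over $j$, while for the two $L_x^{\infty}L_t^2$ terms the disjointness of the $I_j$ gives, for each fixed $x$, that the $L_t^2(I)$ norm is the $\ell^2$-sum of the $L_t^2(I_j)$ norms, whence
\[
\|\pt_x^k u\|_{L_x^{\infty}L_t^2(I)} \le \Big(\sum_{j=1}^N \|\pt_x^k u\|_{L_x^{\infty}L_t^2(I_j)}^2\Big)^{1/2} < \infty, \qquad k=1,2.
\]
Since $N<\infty$ and every summand is finite, all three quantities are finite, which is the assertion. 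I do not expect a genuine analytic obstacle here: the estimates are entirely supplied by Lemma \ref{lem2}, and the only point needing care is the observation of the first paragraph. Because only \emph{finiteness} (and not a quantitative bound) is claimed, it suffices that each endpoint value $\|u(\tau_j)\|_{H^1}$ be finite a priori, so that no quantitative iteration—which would multiply the $H^1$-norm by a constant at each of the $N$ steps—is required.
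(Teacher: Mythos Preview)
Your argument is correct and follows essentially the same route as the paper: subdivide $I$ into finitely many pieces on which $\|u\|_{S(I_j)}\le\delta_2$, apply Lemma \ref{lem2} on each piece, and reassemble. The paper compresses this into the phrase ``a recursive use of Lemma \ref{lem2},'' while you have made the bookkeeping (the a priori finiteness of $\|u(\tau_j)\|_{H^1}$, the time-reversal for the left half, and the $\ell^2$-reassembly of the $L^\infty_xL^2_t$ norms) explicit.
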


\begin{proof}[Proof of Corollary \ref{C:h1}.]  
We subdivide the interval $I$ so that
$S$-norm of the solution on each subinterval is smaller than the constant $\delta_2$ in Lemma \ref{lem2}. 
Note that the number of the subinterval depends only on $\alpha$ and $\|u\|_{S(I)}$.
Then, a recursive use of Lemma \ref{lem2} yields the result.
\end{proof}

%
%
Now, let us turn to the global bound on $Ju$.

\begin{lemma}\label{lem3}
Let $8/5<\alpha<2$. 
Let $t_0 \in \R$ and suppose that $V(-t_0)u_0 \in H^1 \cap H^{0,1}$.
There exists $\varepsilon_3\in (0,\eps_2]$ such that if
$\varepsilon=\|V(-t_0)u_0\|_{H^1\cap H^{0,1}}\le\varepsilon_3$, then  
the unique global $H^1 \cap H^{0,1}$-solution to \eqref{gKdV} under \eqref{gIC} satisfies \eqref{ScatteringC}, \eqref{ScatteringD}, and
\begin{equation}
\sup_{t\in \R}\Jbr{t}^\frac13 \|u(t)\|_{L^\infty_x}+
\|Ju\|_{L_t^{\infty}L_x^2(\R)}+\|v\|_{L_t^{\infty}L_x^2(\R)} + \|\pt_xv\|_{L_x^{\infty}L_t^2(\R)}
\lesssim\varepsilon,\label{ScatteringE}
\end{equation}
where $\eps_2$ is the number given in Lemma \ref{lem2}.
\end{lemma}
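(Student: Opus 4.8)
The bounds \eqref{ScatteringC} and \eqref{ScatteringD} are already supplied by Lemma \ref{lem2} once we take $\varepsilon_3 \le \varepsilon_2$, so the plan is to prove only the three new quantities in \eqref{ScatteringE}: the weighted bounds on $v$ and $Ju$ together with the $L^\infty_x$ decay of $u$. The natural object is $v = Ju + 3\mu t|u|^{2\alpha}u$, which solves the integral equation associated with \eqref{E:v}. I would estimate $v$ in the norm $\|\cdot\|_{Z}$ of \eqref{E:ZT} by Propositions \ref{ho} and \ref{inho}, exactly as in the local argument of Lemma \ref{L:localv}, but now on all of $\R$; the smallness $\|u\|_{S(\R)} \lesssim \varepsilon$ from Lemma \ref{lem2} will let me absorb the term $(2\alpha+1)\mu|u|^{2\alpha}\pt_x v$. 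The genuinely new ingredient is a dispersive estimate for the Airy group: writing $u = V(t)V(-t)u$, the kernel bound $\|V(t)g\|_{L^\infty_x} \lesssim |t|^{-1/3}\|g\|_{L^1}$, together with $H^{0,1}\hookrightarrow L^1$ and $\|xV(-t)u\|_{L^2} = \|J(t)u\|_{L^2}$, yields
\[
	\|u(t)\|_{L^\infty_x} \lesssim \Jbr{t}^{-\frac13}\big(\|u(t)\|_{H^1_x} + \|J(t)u(t)\|_{L^2_x}\big),
\]
the range $|t|\lesssim1$ being covered by Sobolev embedding.

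The difficulty is that these two estimates are coupled: the decay bound needs control of $\|Ju\|_{L^2}$, while the $Z$-estimate for $v$ needs the decay in order to handle the two derivative-free contributions, namely $-2(\alpha-1)\mu|u|^{2\alpha}u$ in \eqref{E:v} and the summand $3\mu t|u|^{2\alpha}u$ relating $v$ to $Ju$. I would therefore run a continuity/bootstrap argument on $[t_0,\infty)$ (the negative direction being identical by time reversal) with bootstrap quantity
\[
	\Phi(T) := \|v\|_{Z([t_0,T])} + \sup_{t\in[t_0,T]} \Jbr{t}^{\frac13}\|u(t)\|_{L^\infty_x},
\]
assuming $\Phi(T) \le C_0\varepsilon$ and improving it to $\Phi(T) \le \tfrac34 C_0\varepsilon$. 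Here $\Phi(t_0) \lesssim \varepsilon$: indeed $\|v(t_0)\|_{L^2} = \|v_0\|_{L^2}\lesssim\varepsilon$, because $\|J(t_0)u_0\|_{L^2} = \|V(-t_0)u_0\|_{H^{0,1}} \le \varepsilon$ and the decay estimate at $t_0$ gives $\|u_0\|_{L^\infty} \lesssim \Jbr{t_0}^{-1/3}\varepsilon$, so that $\Jbr{t_0}\,\||u_0|^{2\alpha}u_0\|_{L^2} \lesssim \Jbr{t_0}^{1-2\alpha/3}\varepsilon^{2\alpha+1} \lesssim \varepsilon^{2\alpha+1}$.

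The three terms in the Duhamel estimate for $v$ are then controlled as follows. The data term is $\lesssim\varepsilon$ as above. The term $(2\alpha+1)\mu|u|^{2\alpha}\pt_x v$ is estimated in $L^{5/4}_xL^{10/9}_t$ by $\|u\|_{S(\R)}^{2\alpha}\|\pt_x v\|_{L^\infty_xL^2_t} \lesssim \varepsilon^{2\alpha}\|v\|_{Z}$ and absorbed for $\varepsilon$ small. The remaining term is placed in $L^1_tL^2_x$: using the interpolation $\||u|^{2\alpha}u\|_{L^2_x} \lesssim \|u\|_{L^\infty_x}^{2\alpha}\|u\|_{L^2_x}$ and the bootstrap decay, $\||u|^{2\alpha}u(t)\|_{L^2_x} \lesssim (C_0\varepsilon)^{2\alpha}\varepsilon\,\Jbr{t}^{-2\alpha/3}$, whose time integral converges precisely because $2\alpha/3 > 1$, i.e.\ $\alpha > 3/2$, which holds since $\alpha > 8/5$. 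This is the crux of the argument and the place where the lower bound on $\alpha$ enters. Feeding $\|v\|_{Z} \lesssim \varepsilon + (C_0\varepsilon)^{2\alpha}\varepsilon$ back into the decay estimate, via $\|Ju\|_{L^2} \le \|v\|_{L^2} + 3|\mu||t|\,\||u|^{2\alpha}u\|_{L^2} \lesssim \|v\|_{L^\infty_tL^2} + \varepsilon^{2\alpha+1}$ (again using $\alpha > 3/2$ to bound $|t|\Jbr{t}^{-2\alpha/3}$), improves $\Phi$ and closes the bootstrap, yielding $\|v\|_{Z(\R)} + \sup_t \Jbr{t}^{1/3}\|u\|_{L^\infty_x} \lesssim \varepsilon$; the bound $\|Ju\|_{L^\infty_tL^2_x}\lesssim\varepsilon$ is then immediate from $Ju = v - 3\mu t|u|^{2\alpha}u$.

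I expect the main obstacle to be the clean organization of this coupled bootstrap rather than any single estimate: specifically, verifying that the derivative-free nonlinearity is integrable in time as an $L^2_x$-valued function, and that the $|t|$-weights produced both in $v_0$ and in $Ju = v - 3\mu t|u|^{2\alpha}u$ are tamed by the $\Jbr{t}^{-1/3}$ dispersive decay. Both hinge on the single arithmetic fact $2\alpha/3 > 1$, so the proof should go through uniformly for all $8/5 < \alpha < 2$.
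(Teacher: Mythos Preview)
Your proposal is correct and follows essentially the same route as the paper: both arguments close a continuity/bootstrap loop coupling the $Z$-estimate for $v$ (via Propositions \ref{ho} and \ref{inho}) with the Klainerman--Sobolev decay of Lemma \ref{ks}, and both hinge on the single arithmetic fact $2\alpha/3>1$ to make $\||u|^{2\alpha}u\|_{L^1_tL^2_x}$ finite and to kill the weight $|t|$ in $Ju=v-3\mu t|u|^{2\alpha}u$. The only organizational difference is that the paper takes $A_T:=\|Ju\|_{L^\infty_tL^2_x}+\|v\|_{L^\infty_tL^2_x}+\|\pt_xv\|_{L^\infty_xL^2_t}$ as the bootstrap quantity and derives the $L^\infty_x$ decay at the end, whereas you put $\sup_t\Jbr{t}^{1/3}\|u(t)\|_{L^\infty_x}$ directly into $\Phi$ and recover $\|Ju\|_{L^\infty_tL^2_x}$ afterwards; these are equivalent via Lemma \ref{ks}, and your handling of $\|v(t_0)\|_{L^2}$ for general $t_0$ is in fact slightly more explicit than the paper's (which tacitly takes $t_0=0$).
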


To prove Lemma \ref{lem3}, we show the Klainerman-Sobolev type inequality.

\begin{lemma}[Klainerman-Sobolev type inequality]\label{ks} 
Let $t\neq 0$ and $p \in [2,\infty]$. 
For any $u\in L_x^2$ satisfying $J(t)u\in L_x^2$, 
we have
\begin{eqnarray*}
\|u\|_{L_x^{p}}
\lesssim |t|^{-\frac13+\frac{2}{3p}}\|u\|_{L_x^2}^{\frac12+\frac1p}\|Ju\|_{L_x^2}^{\frac12-\frac1p}.
\end{eqnarray*}
\end{lemma}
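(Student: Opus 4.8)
The plan is to prove the endpoint $p=\infty$ first and then recover every $p\in[2,\infty]$ by elementary $L^p$-interpolation. For the endpoint I would pass to the profile $w:=V(-t)u$. Since $V(t)$ is unitary on $L^2$ and $J(t)=V(t)xV(-t)$, one has $\|u\|_{L^2}=\|w\|_{L^2}$ and $\|J(t)u\|_{L^2}=\|xV(-t)u\|_{L^2}=\|xw\|_{L^2}$, while $u=V(t)w$ gives $\|u\|_{L^\infty}=\|V(t)w\|_{L^\infty}$. Thus the hypothesis $u,\,J(t)u\in L^2$ is exactly $w,\,xw\in L^2$, and the endpoint claim becomes $\|V(t)w\|_{L^\infty}\lesssim|t|^{-1/3}\|w\|_{L^2}^{1/2}\|xw\|_{L^2}^{1/2}$.

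Two ingredients then suffice. First, the standard dispersive estimate for the Airy group, $\|V(t)w\|_{L^\infty}\lesssim|t|^{-1/3}\|w\|_{L^1}$: writing $V(t)w=K_t*w$ with $K_t(x)=(3t)^{-1/3}\mathrm{Ai}(x(3t)^{-1/3})$ for $t>0$ (and the analogous kernel for $t<0$), this follows from the uniform boundedness $\|\mathrm{Ai}\|_{L^\infty}<\infty$ of the Airy function, since $\|K_t\|_{L^\infty}\lesssim|t|^{-1/3}$ and Young's inequality gives $\|K_t*w\|_{L^\infty}\le\|K_t\|_{L^\infty}\|w\|_{L^1}$. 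Second, a weighted interpolation inequality $\|w\|_{L^1}\lesssim\|w\|_{L^2}^{1/2}\|xw\|_{L^2}^{1/2}$: splitting $\int|w|=\int_{|x|\le R}|w|+\int_{|x|>R}|x|^{-1}|xw|$ and applying Cauchy--Schwarz yields $\|w\|_{L^1}\lesssim R^{1/2}\|w\|_{L^2}+R^{-1/2}\|xw\|_{L^2}$, and optimizing in $R$ (taking $R=\|xw\|_{L^2}/\|w\|_{L^2}$) gives the claim. Chaining the two produces the endpoint estimate with the desired \emph{product} structure.

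For general $p\in[2,\infty]$ I would then interpolate via $\|u\|_{L^p}\le\|u\|_{L^2}^{2/p}\|u\|_{L^\infty}^{1-2/p}$ and insert the endpoint bound for $\|u\|_{L^\infty}$. A direct check of the three exponents confirms that the power of $|t|$ is $-\tfrac13(1-\tfrac2p)=-\tfrac13+\tfrac{2}{3p}$, the power of $\|u\|_{L^2}$ is $\tfrac2p+\tfrac12(1-\tfrac2p)=\tfrac12+\tfrac1p$, and the power of $\|J(t)u\|_{L^2}$ is $\tfrac12(1-\tfrac2p)=\tfrac12-\tfrac1p$, matching the statement; the cases $p=2$ and $p=\infty$ are the two endpoints of this interpolation.

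I do not expect a serious obstacle, since the whole estimate is a Klainerman--Sobolev argument combining dispersion with a weighted embedding. The one point that needs care is obtaining the \emph{multiplicative} bound $\|u\|_{L^2}^{1/2}\|J(t)u\|_{L^2}^{1/2}$ rather than the weaker additive bound $\|u\|_{L^2}+\|J(t)u\|_{L^2}$: a crude estimate such as $\|w\|_{L^1}\le\|\langle x\rangle^{-1}\|_{L^2}\|\langle x\rangle w\|_{L^2}$ produces the wrong homogeneity, so it is essential to use the scale-optimized split above, whose balance in $R$ is exactly what reproduces the scaling-invariant product that the KdV scaling $(x,t)\mapsto(\lambda x,\lambda^3 t)$ predicts.
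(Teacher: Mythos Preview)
Your proof is correct and follows essentially the same route as the paper: prove the $p=\infty$ endpoint by writing $u=V(t)V(-t)u$, applying the Airy dispersive estimate $\|V(t)f\|_{L^\infty}\lesssim|t|^{-1/3}\|f\|_{L^1}$, and then bounding $\|V(-t)u\|_{L^1}\lesssim\|V(-t)u\|_{L^2}^{1/2}\|xV(-t)u\|_{L^2}^{1/2}=\|u\|_{L^2}^{1/2}\|J(t)u\|_{L^2}^{1/2}$, with the general $p$ recovered by interpolation with the trivial $L^2$ bound. Your write-up is in fact more detailed than the paper's (the explicit $R$-optimization for the weighted $L^1$ embedding and the exponent check at the end), but the underlying argument is identical.
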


\begin{proof}[Proof of Lemma \ref{ks}]
We consider the case $p=\infty$. 
By the elementary property of 
the Airy function, 
we see
\begin{eqnarray*}
\|V(t)f\|_{L_x^{\infty}}\lesssim t^{-\frac13}\|f\|_{L_x^1}.
\end{eqnarray*}
Hence by the $L^2$ unitary property of the group $V(t)$, 
\begin{eqnarray*}
\|u(t)\|_{L_x^{\infty}}
&=&\|V(t)V(-t)u\|_{L_x^{\infty}}\\
&\lesssim&t^{-\frac13}\|V(-t)u\|_{L_x^1}\\
&\lesssim&t^{-\frac13}\|V(-t)u\|_{L_x^2}^{\frac12}
\|xV(-t)u\|_{L_x^2}^{\frac12}\\
&=&Ct^{-\frac13}\|u\|_{L_x^2}^{\frac12}
\|J(t)u\|_{L_x^2}^{\frac12}.
\end{eqnarray*}
Hence, we obtain the $L^\infty$-estimate.
Note that $L^2$-estimate is obvious by the unitary property of $V(t)$.
The general case follows by interpolation.
\end{proof}

\begin{proof}[Proof of Lemma \ref{lem3}] 
Suppose that $\eps \le\eps_2$. Then, the global solution $u(t)$ satisfies \eqref{ScatteringC} and \eqref{ScatteringD}.
We prove the bound \eqref{ScatteringE} on $[0,\infty)$.

By the definition of $v$, 
\begin{eqnarray*}
\|Ju\|_{L_x^2}\lesssim\|v\|_{L_x^2}+t\||u|^{2\alpha}u\|_{L_x^2}.
\end{eqnarray*}
By the Sobolev and the Kleinerman-Sobolev inequalities 
(Lemma \ref{ks}), 
\begin{eqnarray*}
\|u(t)\|_{L_x^{\infty}}
\lesssim\left\{
\begin{array}{l}
\displaystyle{
\|u\|_{H_x^1}\lesssim\varepsilon
\qquad\qquad\qquad\text{for}\ 0\le t\le1,}\\
\displaystyle{
t^{-\frac13}\|u\|_{L_x^2}^{\frac12}\|Ju\|_{L_x^2}^{\frac12}
\lesssim \varepsilon^{\frac12}t^{-\frac13}\|Ju\|_{L_x^2}^{\frac12}
\qquad\text{for}\ t\ge1.}
\end{array}
\right.
\end{eqnarray*}
Hence 
\begin{eqnarray}
\||u|^{2\alpha}u\|_{L_x^2}
\lesssim {\bf 1}_{[0,1]}(t)\varepsilon^{2\alpha+1}
+{\bf 1}_{[1,\infty]}(t)\varepsilon^{\alpha+1}t^{-\frac23\alpha}
\|Ju\|_{L_x^2}^{\alpha},
\label{N1}
\end{eqnarray}
where ${\bf 1}_{A}$ is a characteristic function on the set $A$. 
Therefore, for any $T>1$
\begin{eqnarray}
\|Ju\|_{L_t^{\infty}L_x^2(I_T)}\lesssim\|v\|_{L_t^{\infty}L_x^2(I_T)}+\varepsilon^{2\alpha+1}
+\varepsilon^{\alpha+1}
\|Ju\|_{L_t^{\infty}L_x^2(I_T)}^{\alpha},\label{N2}
\end{eqnarray}
where $I_T=[0,T)$.
By Propositions \ref{ho} and 
\ref{inho}, \eqref{ScatteringC}, and (\ref{N1}), 
\begin{eqnarray}
\lefteqn{\|v\|_{L_t^{\infty}L_x^2(I_T)}
+\|\pt_xv\|_{L_x^{\infty}L_t^2(I_T)}}\label{N3}\\
&\lesssim&
\|xu_0\|_{L_x^2}+\||u|^{2\alpha}\pt_xv
\|_{L_x^{\frac54}L_t^{\frac{10}{9}}(I_T)}
+\||u|^{2\alpha}u\|_{L_t^1L_x^2(I_T)}\nonumber\\
&\lesssim&
\|xu_0\|_{L_x^2}+\|u\|_{S(I_T)}^{2\alpha}
\|\pt_xv\|_{L_x^{\infty}L_t^{2}(I_T)}+\||u|^{2\alpha}u\|_{L_t^1L_x^2(I_T)}\nonumber\\
&\lesssim&
\varepsilon+\varepsilon^{2\alpha}\|\pt_xv\|_{L_x^{\infty}L_t^{2}(I_T)}+\varepsilon^{2\alpha+1}
+\varepsilon^{\alpha+1}\|Ju\|_{L_t^{\infty}L_x^2(I_T)}^{\alpha}.\nonumber
\end{eqnarray}
By (\ref{N2}) and (\ref{N3}), 
\begin{align*}
&\|Ju\|_{L_t^{\infty}L_x^2(I_T)}+\|v\|_{L_t^{\infty}L_x^2(I_T)}+\|\pt_xv\|_{L_x^{\infty}L_t^2(I_T)} \\
&\lesssim\varepsilon+\varepsilon^{2\alpha}\|\pt_xv\|_{L_x^{\infty}L_t^{2}(I_T)}+\varepsilon^{2\alpha+1}
+\varepsilon^{\alpha+1}\|Ju\|_{L_t^{\infty}L_x^2(I_T)}^{\alpha}.
\end{align*}
Hence letting $\|u\|_{A_T}:=\|Ju\|_{L_t^{\infty}L_x^2(I_T)}+\|v\|_{L_t^{\infty}L_x^2(I_T)}+\|\pt_xv\|_{L_x^{\infty}L_t^2(I_T)}$, 
we have
\begin{eqnarray*}
\|u\|_{A_T}
\lesssim\varepsilon+\varepsilon^{2\alpha}\|u\|_{A_T}+\|u\|_{A_T}^{\alpha}.
\end{eqnarray*}
Hence if $\varepsilon$ is sufficiently small, 
then this inequality implies that $\|u\|_{A_T} \lesssim \eps$.
Since $T>1$ is arbitrary, we have
$\|u\|_{A_\infty} \lesssim \eps$.
Finally, combining 
$\|u\|_{A_\infty} \lesssim \eps$ and Lemma \ref{ks}, 
we have
\[
	\sup_{t\in \R} \Jbr{t}^{\frac13} \|u(t)\|_{L^\infty} \lesssim \eps.
\]
This completes the proof of
 (\ref{ScatteringE}). 
\end{proof}

We now turn to the scattering in $H^1 \cap H^{0,1}$.

\begin{lemma}\label{L:scatter1}
Let $8/5<\alpha<2$. 
Let $u$ be a maximal-lifespan $H^1 \cap H^{0,1}$-solution to \eqref{gKdV}.
Pick $t_0 \in I_{\max}$.
If
\[
	\|u\|_{S([t_0,T_{\max}))} + \|Ju\|_{L^\infty_t L^2_x ([t_0,T_{\max}))} <\infty,
\]
then $u(t)$ scatters in $H^1 \cap H^{0,1}$ for positive time direction.
A similar statement holds for the negative time direction.
\end{lemma}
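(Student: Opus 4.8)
The plan is to produce the asymptotic state
\[
	u_+ := V(-t_0)u(t_0) + \mu\int_{t_0}^\infty V(-\tau)\pt_x(|u|^{2\alpha}u)(\tau)\,d\tau
\]
and to verify convergence of $V(-t)u(t)$ to $u_+$ in $H^1$ and in $H^{0,1}$ separately. First, since $\|u\|_{S([t_0,T_{\max}))}<\infty$, Theorem \ref{T:bu} forces $T_{\max}=\infty$, while Corollary \ref{C:h1} upgrades the hypothesis to the full bounds $\|u\|_{L^\infty_tH^1_x}+\|\pt_xu\|_{L^\infty_xL^2_t}+\|\pt_x^2u\|_{L^\infty_xL^2_t}<\infty$ on $[t_0,\infty)$. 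For the $H^1$ scattering I would write $V(-t)u(t)-u_+=-\mu\int_t^\infty V(-\tau)\pt_x(|u|^{2\alpha}u)\,d\tau$ and bound its $H^1$ norm via the inhomogeneous estimate \eqref{k} together with the fractional Leibniz rule (Lemma \ref{2.5}), exactly as in the derivation of \eqref{sharp}; the controlling factors carry a power $\|u\|_{S([t,\infty))}^{2\alpha}$, and since $\|u\|_{S([t_0,\infty))}<\infty$ one has $\|u\|_{S([t,\infty))}\to0$ as $t\to\infty$, giving convergence in $H^1$ (in particular in $L^2$).

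The heart of the matter is the $H^{0,1}$ scattering, and this is where I expect the main obstacle. Since $V$ is not bounded on $H^{0,1}$, instead of running Duhamel in $H^{0,1}$ I would exploit the identity $xV(-t)u(t)=V(-t)J(t)u(t)$ and study $Ju$ in $L^2$. The difficulty is that the equation \eqref{E:Ju} for $Ju$ carries the explicitly $t$-growing forcing $3\mu t(\pt_t+\pt_x^3)(|u|^{2\alpha}u)$, which is incompatible with a direct Strichartz convergence argument. To remove it I would pass to $v=Ju+3\mu t|u|^{2\alpha}u$, which solves the benign equation \eqref{E:v}.

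I would first establish the global bounds $\|v\|_{L^\infty_tL^2_x}+\|\pt_xv\|_{L^\infty_xL^2_t}<\infty$ on $[t_0,\infty)$ by the subdivision-plus-contraction scheme of Lemma \ref{L:localv}, the $L^2$ data at each node being controlled by the hypothesis $\|Ju\|_{L^\infty_tL^2_x}<\infty$ together with the decay established below. Then, writing the Duhamel tail for $v$ and applying Proposition \ref{inho} on $[t,\infty)$, the two forcing terms are dominated by $\|u\|_{S([t,\infty))}^{2\alpha}\|\pt_xv\|_{L^\infty_xL^2_t}$ and $\int_t^\infty\||u|^{2\alpha}u\|_{L^2_x}\,d\tau$, both of which vanish as $t\to\infty$; this yields a limit $V(-t)v(t)\to v_+$ in $L^2$.

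Finally I would reconstruct the $H^{0,1}$ limit from $V(-t)Ju(t)=V(-t)v(t)-3\mu t\,V(-t)(|u|^{2\alpha}u)$. The crucial quantitative input is the control of the second term: by the Klainerman--Sobolev inequality (Lemma \ref{ks}) and the a priori bounds, $\||u|^{2\alpha}u\|_{L^2_x}\lesssim \Jbr{t}^{-2\alpha/3}$, so that $t\,\||u|^{2\alpha}u\|_{L^2_x}\lesssim t^{\,1-2\alpha/3}\to0$ precisely because $\alpha>3/2$ (which holds since $\alpha>8/5$); the same decay is what makes $\int_t^\infty\||u|^{2\alpha}u\|_{L^2_x}\,d\tau$ finite and vanishing in the previous step. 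Hence, using that $V(-t)$ is an $L^2$-isometry, $xV(-t)u(t)=V(-t)Ju(t)\to v_+$ in $L^2$. Combined with $V(-t)u(t)\to u_+$ in $L^2$ from the $H^1$ step, the closedness of multiplication by $x$ on $L^2$ (with domain $H^{0,1}$) identifies $v_+=xu_+$, so $u_+\in H^{0,1}$ and the convergence holds in $H^{0,1}$. Together with the $H^1$ step this gives scattering in $H^1\cap H^{0,1}$, and the negative time direction follows by the time-reversal symmetry.
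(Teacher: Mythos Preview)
Your proposal is correct and follows essentially the same route as the paper: invoke Theorem~\ref{T:bu} and Corollary~\ref{C:h1} to get $T_{\max}=\infty$ and the global $H^1$ Strichartz bounds, show the $H^1$ Cauchy property for $V(-t)u(t)$ via the Duhamel tail controlled by $\|u\|_{S([t,\infty))}^{2\alpha}$, then pass to $v=Ju+3\mu t|u|^{2\alpha}u$ and use \eqref{E:v} together with the Klainerman--Sobolev decay $\|u(t)\|_{L^\infty}\lesssim t^{-1/3}$ (hence $t\||u|^{2\alpha}u\|_{L^2}\lesssim t^{1-2\alpha/3}\to 0$ since $\alpha>8/5>3/2$) to obtain convergence of $V(-t)Ju(t)$ in $L^2$. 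Your explicit step of first securing the global bound $\|\pt_x v\|_{L^\infty_xL^2_t([t_0,\infty))}<\infty$ by a subdivision argument, and the closing identification $v_+=xu_+$, are details the paper leaves implicit but are exactly in the spirit of its argument.
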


\begin{proof}[Proof of Lemma \ref{L:scatter1}] 
By the blowup criteria, we have $T_{\max}=\infty$.
Hence, without loss of generality, we may suppose that $t_0>0$.
By Corollary \ref{C:h1}, we have
\[
	\|u\|_{L^\infty_t H^1_x([t_0,\infty))} + \|\pt_x u\|_{L^\infty_x  L^2_t([t_0,\infty))} + \|\pt_x^2 u\|_{L^\infty_x  L^2_t([t_0,\infty))}  <\infty.
\]

We shall show that 
$V(-t)u(t)$ is a Cauchy sequence in $H^1\cap H^{0,1}$. 
As in the proof of Lemma \ref{lem2}, for $t_0 \le s<t$, we have
\begin{eqnarray*}
\lefteqn{\|V(-t)u(t)-V(-s)u(s)\|_{H_x^1}}\\
&=&|\mu|
\left\|\int_s^tV(-\tau)\pt_x(|u|^{2\alpha}u)d\tau\right\|_{H_x^1}\\
&\lesssim&
\|\pt_x(|u|^{2\alpha}u)\|_{L_x^{\frac54}L_t^{\frac{10}{9}}(s,t)}
+\|\pt_x^2(|u|^{2\alpha}u)\|_{L_x^{\frac54}L_t^{\frac{10}{9}}((s,t))}\\
&\lesssim&
\|u\|_{S((s,t))}^{2\alpha}
(\|\pt_xu\|_{L_x^{\infty}L_t^2([t_0,\infty))}
+\|\pt_x^2u\|_{L_x^{\infty}L_t^2}([t_0,\infty)))\\
&\to&0\quad\text{as}\ s\to\infty.
\end{eqnarray*}
Let us turn to the estimate in $H^{0,1}$.
By (\ref{v}), 
\begin{eqnarray}
\lefteqn{\|x(V(-t)u(t)-V(-s)u(s))\|_{L_x^2}}\label{N4}\\
&=&
\|V(-t)J(t)u(t)-V(-s)J(s)u(s)\|_{L_x^2}\nonumber\\
&\lesssim&\|V(-t)v(t)-V(-s)v(s)\|_{L_x^2}\nonumber\\
& &+t\||u|^{2\alpha}u(t)\|_{L_x^2}+s\||u|^{2\alpha}u(s)\|_{L_x^2}.
\nonumber
\end{eqnarray}
By assumption and Lemma \ref{ks}, 
we have $\|u(t)\|_{L^\infty} = O(t^{-1/3})$.
Hence, together with the mass conservation (\ref{mass}), 
one sees that the last two terms in the right hand side of (\ref{N4}) 
vanish as $s,t\to \infty$.
Further, since $v$ satisfies (\ref{E:v}), Proposition \ref{inho} yields
\begin{align}
\lefteqn{\|V(-t)v(t)-V(-s)v(s)\|_{L_x^2}}\label{N5}\\
&\lesssim
\left\|\int_s^tV(-\tau)|u|^{2\alpha}\pt_xvd\tau\right\|_{L_x^2}
+\left\|\int_s^tV(-\tau)|u|^{2\alpha}ud\tau\right\|_{L_x^2}\nonumber\\
&\lesssim
\||u|^{2\alpha}\pt_xv\|_{L_x^{\frac54}L_t^{\frac{10}{9}}((s,t))}
+\||u|^{2\alpha}u\|_{L_t^1L_x^2((s,t))}\nonumber\\
&\lesssim
\|u\|_{S(s,t)}^{2\alpha}
\|\pt_xv\|_{L_x^{\infty}L_t^2([t_0,\infty))}
+s^{-\frac23\alpha+1} (\sup_{t\ge1} t^{\frac13}\|u(t)\|_{L^\infty})^{2\alpha}\|u_0\|_{ L^2}
\nonumber\\
&\to 0\quad\text{as}\ s\to\infty.
\nonumber
\end{align}
Plugging (\ref{N5}) 
to (\ref{N4}), 
we obtain
\begin{eqnarray*}
\|x(V(-t)u(t)-V(-s)u(s))\|_{L_x^2}\to0\quad\text{as}\ s\to\infty.
\end{eqnarray*}
Therefore we have that $V(-t)u(t)$ is a Cauchy sequence 
in $H^1\cap H^{0,1}$. 
This implies that $u(t)$ scatters in $H^1 \cap H^{0,1}$
for positive time direction.
\end{proof}

\begin{proof}[Proof of Theorem \ref{thm1}]  
Theorem \ref{thm1} is an immediate consequence 
of Lemmas \ref{lem3} and \ref{L:scatter1}. 
\end{proof}


\section{Proof of Theorem \ref{thm2}} 

In this section we prove Theorem \ref{thm2}. 

\begin{proof}[Proof of Theorem \ref{thm2}]
Let $u(t)$ be a maximal-lifespan $H^1 \cap H^{0,1}$-solution.

\subsubsection*{Step 1} Let us prove ``(iii)$\Rightarrow$(ii)''.
Suppose that for some 
$\kappa> \frac{\alpha}{3(\alpha-1)(2\alpha+1)}$ 
and $t_0 \in I_{\max}$, 
\[
	R:= \| u\|_{S([t_0,T_{\max}))} +\sup_{t\in [t_0,T_{\max})} \Jbr{t}^{\kappa} \| u(t)\|_{L^{2(2\alpha+1)}_x}
	 <\infty.
\]
By Theorem \ref{T:bu}, we see that $T_{\max}=\infty$. 
Further, by Corollary \ref{C:h1}, we obtain
\[
	\| u\|_{L^\infty ((t_0,\infty);H^1_x)} <\infty.
\]

We claim that there exists 
$\tilde{\kappa}>\frac1{2\alpha+1}$ such that
\begin{equation}\label{E:eqpf1-0}
	\sup_{t \ge t_1}\Jbr{t}^{\tilde{\kappa}} 
	\| u(t) \|_{L^{2(2\alpha+1)}_x} <\infty
\end{equation}
for some $t_1\ge t_0$.
We consider the case $\kappa \le \frac1{2\alpha+1}$ 
since if $\kappa>\frac1{2\alpha+1}$ 
then this is trivial by choosing $\tilde{\kappa}=\kappa$.
Let us consider the case $\kappa<\frac1{2\alpha+1}$.
Let $\delta_0>0$ be a constant to be determined later. 
For any choice of $\delta_0>0$,
there exists $t_1\in I_{\max} \cap[ \max(t_0,1),\infty)$ such that
\[
	\|u\|_{S((t_1,\infty))} \le \delta_0.
\]
We apply Propositions \ref{ho} and \ref{inho}, 
and the assumption to obtain
\begin{align*}
	\|v\|_{Z((t_1,T))} \le{}& C\|v(t_1)\|_{L^2}
	+ C\|u\|_{S((t_1,T))}^{2\alpha} \|v\|_{Z((t_1,T))} 
	+ \| |u|^{2\alpha} u \|_{L^1_t L^2_x((t_1,T))}  \\
	\le{}& C\|v(t_1)\|_{L^2}
	+ C\|u\|_{S((t_1,T))}^{2\alpha} \|v\|_{Z((t_1,T))} + \| t^{-(2\alpha+1)\kappa} \|_{L^1_t(t_1,T)} R^{2\alpha+1} \\
	\le{}& C\|v(t_1)\|_{L^2}
	+ C \delta_0^{2\alpha}   \|v\|_{Z((t_1,T))}
	+ CR^{2\alpha+1} T^{1-(2\alpha+1)\kappa},
\end{align*}
where $Z(I)$ is as in \eqref{E:ZT}.
We choose $\delta_0$ so that $C\delta_0^{2\alpha} \le \frac12$. Then, we see that
\[
	\|v\|_{Z((t_1,T))} \le 2C\|v(t_1)\|_{L^2}
	+ 2CR^{2\alpha+1} T^{1-(2\alpha+1)\kappa}
\]
for any $T\in (t_1,\infty)$.
In particular, 
we obtain
\begin{equation}\label{E:eqpf1-1}
	\| v(t) \|_{L^2} \lesssim t^{1-(2\alpha+1)\kappa}
\end{equation}
for all $t\ge t_1 (\ge 1)$.
One then sees from this inequality,
Lemma \ref{ks}, and the mass conservation (\ref{mass}) that
\begin{align*}
	t^{\frac{2\alpha}{3(2\alpha+1)}} \|u(t)\|_{L^{2(2\alpha+1)}}
	\le \|u_0\|_{L^2}^{\frac{\alpha+1}{2\alpha+1}} (\|v(t)\|_{L^2}+ t\|u(t)\|_{L^{2(2\alpha+1)}}^{2\alpha+1} )^{\frac{\alpha}{2\alpha+1}} 
	\lesssim t^{\frac{\alpha}{2\alpha+1} -\alpha \kappa}.
\end{align*}
Hence,
\begin{equation}\label{E:eqpf1-2}
	t^{\kappa_1} \| u(t) \|_{L^{2(2\alpha+1)}_x} \lesssim 1
\end{equation}
for all $t\ge t_1$, where
$\kappa_1:=\alpha \kappa-\frac{\alpha}{3(2\alpha+1)} $.
One sees that
\[
	\kappa_1 - \kappa = (\alpha-1) \( \kappa-\frac{\alpha}{3(\alpha-1)(2\alpha+1)}\) >0
\]
by assumption on $\kappa$.
This implies that \eqref{E:eqpf1-2} is a better decay estimate. 
If $\kappa_1<\frac{1}{2\alpha+1}$ then
we repeat the above argument starting with the better estimate \eqref{E:eqpf1-2}. Then, we obtain
\begin{equation*}
	t^{\kappa_2} \| u(t) \|_{L^{2(2\alpha+1)}_x} \lesssim 1
\end{equation*}
for all $t\ge t_1$, where $t_1$ is the exactly same one and
$\kappa_2:=\alpha \kappa_1-\frac{\alpha}{3(2\alpha+1)} $.
Similarly,
we construct $\kappa_j$ by induction. More precisely, if
$\kappa_{j}<\frac{1}{2\alpha+1}$
then we repeat the above argument to construct $\kappa_{j+1}>\kappa_j$ by $\kappa_{j+1}:=\alpha \kappa_j-\frac{\alpha}{3(2\alpha+1)} $.
Since
\begin{align*}
	\kappa_{j+1}-\kappa_j
	&= (\alpha-1) \( \kappa_j-\frac{\alpha}{3(\alpha-1)(2\alpha+1)}\) \\
	&>(\alpha-1) \( \kappa-\frac{\alpha}{3(\alpha-1)(2\alpha+1)}\)= \kappa_1-\kappa
\end{align*}
for every $j$,
we have $\kappa_j \ge \kappa + j(\kappa_1-\kappa)$. Hence,
this induction procedure stops at a finite time, i.e., there exists finite $j_0$ such that 
$\kappa_{j_0-1}< \frac1{2\alpha+1}$, 
$\kappa_{j_0}\ge \frac1{2\alpha+1}$, and
\begin{equation*}
	t^{\kappa_{j_0}} \| u(t) \|_{L^{2(2\alpha+1)}_x} \lesssim 1
\end{equation*}
holds for all $t\ge t_1$.
If $\kappa_{j_0}>\frac1{2\alpha+1}$ then we have \eqref{E:eqpf1-0} with the choice $\tilde{\kappa} = \kappa_{j_0}$.
Let us consider the case $\kappa_{j_0}=\frac1{2\alpha+1}$.
In this case, we replace
$\kappa_{j_0}$ by some number 
between $\frac{\alpha+3}{3\alpha(2\alpha+1)}$ and $\frac1{2\alpha+1}$, say
\[
	\kappa_{j_0}= \frac12\(\frac1{2\alpha+1} + \frac{\alpha+3}{3\alpha(2\alpha+1)}\),
\]
and apply the above argument once again. Then, one obtain \eqref{E:eqpf1-0} since
\[
	\kappa_{j_0+1}>\frac1{2\alpha+1} \Longleftrightarrow \kappa_{j_0} >\frac{\alpha+3}{3\alpha(2\alpha+1)}.
\]
The case $\kappa=\frac1{2\alpha+1}$ is handled also in this way.

With the estimate \eqref{E:eqpf1-0} in hand, we obtain a refined estimate for $v$.
Arguing as in the proof of \eqref{E:eqpf1-1}, we have
\begin{align*}
	\|v\|_{Z((t_1,T))} \le {}& C\|v(t_1)\|_{L^2}
	+ \frac12 \|v\|_{Z((t_1,T))} + C\| t^{-(2\alpha+1)\tilde{\kappa}} \|_{L^1((t_1,T))}  
\end{align*}
for any $T\ge t_1$. As
 $\tilde{\kappa}(2\alpha+1) > 1$, the third term in the right hand side is finite and bounded uniformly in $T$. Hence, we obtain
\begin{equation}\label{E:eqpf1-3}
	\| v \|_{L^\infty_t L^2_x([t_1,\infty))} < \infty.
\end{equation}
By combining $\|u\|_{L^\infty_t H^1_x ([t_0,\infty))} <\infty$,
\eqref{E:eqpf1-0}, and  \eqref{E:eqpf1-3}, one obtains
\[
		 \|V(-t)u\|_{L^\infty_t H^{0,1}_x([t_1,\infty))} \lesssim \|u\|_{L^\infty_t L^2_x([t_1,\infty))}+ \| Ju \|_{L^\infty_t L^2_x([t_1,\infty))}
< \infty.
\]
This is property (ii) since $t_1 \in I_{\max}$ and $T_{\max}=\infty$.
Thus, we complete the proof of ``(iii)$\Rightarrow$(ii)''.

\subsubsection*{Step 2}
We next prove ``(ii)$\Rightarrow$(i)''.
This part corresponds to the so-called conditional scattering.

Suppose that 
\begin{eqnarray}
\|V(-t)u\|_{L_t^{\infty}H_x^{0,1}(t_0,T_{\mathrm{max}})}<+\infty
\label{E:pf-gb}
\end{eqnarray}
for some $t_0 \in I_{\max}$.
By the local well-posedness (Theorem \ref{T:lwp}), 
one sees that $T_{\max}=\infty$.
Hence, by replacing $t_0$ with a larger one if necessary, 
one may suppose that $t_0>1$.
Let us prove 
the bound
\begin{equation}\label{E:eqpf2-1}
	\|u\|_{S((\widetilde{t}_0,\infty))} <\infty
\end{equation}
holds for some $\widetilde{t}_0\ge t_0$.
By Lemma \ref{ks}, the assumption \eqref{E:pf-gb} 
and the mass conservation (\ref{mass}), one sees that
\[
	t^{\frac{2\alpha-1}{3(2\alpha+1)}} 
	\|u(t) \|_{L^{2\alpha+1}} \lesssim
	\|u_0\|_{L^2}^{\frac12 
	+ \frac1{2\alpha+1}} \|J(t)u
	\|_{L^\infty L^2}^{\frac12 - \frac1{2\alpha+1}} <\infty.
\]
In particular, $\|u(t) \|_{L^{2\alpha+1}}$ is bounded uniformly in $t$.
Then, by the energy conservation (\ref{energy}) and the assumption 
\eqref{E:pf-gb} on $u$, we have
\begin{equation}\label{E:eqpf2-2}
\sup_{t\in [t_0,\infty)} 
\|V(-t)u(t)\|_{H^1 \cap H^{0,1}} <\infty.
\end{equation}
Pick a time sequence $\{t_n\}_{n\ge1} \subset [t_0,\infty)$ 
so that $t_n<t_{n+1}\to \infty$ as $n\to\infty$.
Then, by means of Lemma \ref{L:tb},
one can choose a subsequence, 
which we denote again by $\{t_n\}$, so that
$V(-t_n)u(t_n)$ converges (strongly) in $\hat{L}^\alpha$.
Let $\psi_+ \in \hat{L}^\alpha$ be the limit of the subsequence.


We 
let $\tilde{u}(t)$ be a unique $\hat{L}^\alpha$-solution to \eqref{gKdV}
which scatters to $\psi_+$ in $\hat{L}^\alpha$, i.e.,
\[
	\| V(-t)\tilde{u}(t) - \psi_+\|_{\hat{L}^\alpha}
	\to 0
\]
as $t\to \infty$.
We choose $T \in \R$ so that $\tilde{u}(t)$ exists on $[T,\infty)$.
Without loss of generality, we may suppose that $T \ge t_0$.

Note that
\begin{eqnarray}
	M:= \norm{\tilde{u}}_{S([T,\infty))\cap X([T,\infty))} \in [0,\infty).
	\label{u1}
\end{eqnarray}
Let $\eps=\eps(M)$ be the number given by 
long time perturbation 
(Proposition \ref{prop:long}). 
Our next goal is to show that
\[
	\norm{u}_{S([T,\infty))\cap X([T,\infty))} \le M+\eps.
\]
We apply Proposition \ref{prop:long} 
with the choice $\widetilde{u}(t):=\tilde{u}(t)$,
$I:=[T,\infty)$, and $t_0:=t_n$.
Note that $t_0 \in I$ for large $n$.
\eqref{ba1} is satisfied with the above $M$.
Further, since $\tilde{u}$ is a solution to \eqref{gKdV}, one has $\mathcal{E}=0$.
By Proposition \ref{ho},
\begin{align*}
	&\| V(t-t_n)(u(t_n) -\tilde{u}(t_n)) \|_{S([T,\infty)) \cap X([T,\infty))} \\
	&\le \| V(-t_n)u(t_n) -V(-t_n)\tilde{u}(t_n) \|_{\hat{L}^\alpha} \\
	&\le \| V(-t_n)u(t_n) -\psi_+ \|_{\hat{L}^\alpha} 
	+ \| \psi_+ -V(-t_n)\tilde{u}(t_n) \|_{\hat{L}^\alpha} \to 0
\end{align*}
as $n\to \infty$.
Hence, \eqref{ba2} is fulfilled for large $n$. Hence, one has
\[
	\| u - \tilde{u} \|_{S([T,\infty))\cap X([T,\infty))} \le \eps.
\]
We have the desired conclusion by combining this with \eqref{u1}.
Thus, we obtain \eqref{E:eqpf2-1} with the choice $\widetilde{t}_0=T$.
By means of Lemma \ref{L:scatter1}, \eqref{E:pf-gb} and \eqref{E:eqpf2-1} imply that
$u(t)$ scatters in $H^1 \cap H^{0,1}$ for positive time direction.
Thus, we completes the proof of ``(ii)$\Rightarrow$(i)''.


\subsubsection*{Step 3}
Let us finally prove ``(i)$\Rightarrow$(iii)''.

Suppose that a maximal-lifespan $H^1 \cap H^{0,1}$-solution $u$ scatters in $H^1 \cap H^{0,1}$ for positive time direction.
This immediately implies that
\begin{eqnarray}
	\lefteqn{\|u\|_{L^\infty_t H^1_x ([t_0,\infty))}+ 
	\|Ju\|_{L^\infty_t L^2_x ([t_0,\infty))}}
	\qquad\label{E:eqpf3-1}\\
	&\lesssim& \|V(-t)u(t)\|_{L^\infty_t (H^1_x\cap H^{0,1}_x) ([t_0,\infty))}
	 <\infty\nonumber
\end{eqnarray}
for any $t_0 \in I_{\max}$.
We fix $t_0>1$.
By the embedding $H^1 \cap H^{0,1} \hookrightarrow \hat{L}^\alpha$, we see that solution $u$ scatters also in $\hat{L}^\alpha$, which is equivalent to
\begin{equation}\label{E:eqpf3-2}
	\|u\|_{S([t_0,\infty))} <\infty.
\end{equation}
For $t\ge t_0(>1)$, one sees from Lemma \ref{ks} and \eqref{E:eqpf3-1} that
\begin{eqnarray}
t^{\frac{2\alpha}{3(2\alpha+1)}}
\|u(t)\|_{L^{2(2\alpha+1)}_x}
\lesssim 
\|u(t)\|_{L^2}	+ \|J(t)u(t)\|_{L^2}
\le C<\infty.\label{E:eqpf3-4}
\end{eqnarray}
Hence, combining \eqref{E:eqpf3-2} 
and \eqref{E:eqpf3-4},
we obtain
\[
	\|u\|_{S([t_0,\infty))} + \sup_{t\ge t_0} 
	\Jbr{t}^{\frac{2\alpha}{3(2\alpha+1)}}\|u(t)\|_{L^{2(2\alpha+1)}_x} < \infty,
\]
which is (iii).
Note that 
$\frac{2\alpha}{3(2\alpha+1)} > \frac{\alpha}{3(\alpha-1)(2\alpha+1)}$ if and only if $\alpha>3/2$.
This completes the proof of ``(i)$\Rightarrow$(iii)''.

Finally, suppose (i), (ii), and (iii) hold.
$T_{\max}=\infty$ follows, for instance, from (i).
We prove the bound. 
Since $V(-t)u(t)$ converges in $H^1 \cap H^{0,1}$ as $t\to\infty$ it is bounded in $H^1 \cap H^{0,1}$ uniformly in $t\in [t_0,\infty)$ for any $t_0 \in I_{\max}$.
This also implies the $L^\infty$-decay estimate 
\[
	\sup_{t\in [t_0,\infty)} \Jbr{t}^{\frac13} \|u(t)\|_{L^\infty_x} \lesssim 1
\]
by means of Lemma \ref{ks}.
The bound in $S([t_0,\infty))$ follows from (iii).
\end{proof}

\subsection*{Acknowledgements} 
S.M. was  supported by JSPS KAKENHI Grant Numbers JP23K20803, JP23K20805, and JP24K00529.
J.S. was supported by JSPS KAKENHI Grant Numbers 
JP21K18588 and JP23K20805.  


\end{document}